\newtheorem{theorem}{Theorem}[section]
\newtheorem{lemma}[theorem]{Lemma}
\newtheorem{prop}[theorem]{Proposition}
\newtheorem{coro}[theorem]{Corollary}
\theoremstyle{definition}
\newtheorem{definition}[theorem]{Definition}
\newtheorem{question}[theorem]{Question}
\theoremstyle{remark}
\newtheorem{remark}[theorem]{Remark}
\theoremstyle{plain}
\newenvironment{customthm}[1]
  {\innercustomthm}
  {\endinnercustomthm}
\numberwithin{equation}{section}
\theoremstyle{plain}
\newtheorem{maintheorem}{Theorem}
\newcommand{\Q}{\ensuremath{\mathbb{Q}}}
\newcommand{\R}{\ensuremath{\mathbb{R}}}
\newcommand{\Z}{\ensuremath{\mathbb{Z}}}
\newcommand{\nt}{\ensuremath{\mathbb{N}}}
\newcommand{\bt}{\operatorname{BT}}
\newcommand{\crd}{\operatorname{CrD}}
\newcommand{\varia}{\operatorname{var}}
\begin{document}

\title[Quasisymmetric orbit-flexibility]{Quasisymmetric orbit-flexibility \\of multicritical circle maps}

\author{Edson de Faria}
\address{Instituto de Matem\'atica e Estat\'istica, Universidade de S\~ao Paulo}
\curraddr{Rua do Mat\~ao 1010, 05508-090, S\~ao Paulo SP, Brasil}
\email{edson@ime.usp.br}

\author{Pablo Guarino}
\address{Instituto de Matem\'atica e Estat\'istica, Universidade Federal Fluminense}
\curraddr{Rua Prof. Marcos Waldemar de Freitas Reis, S/N, 24.210-201, Bloco H, Campus do Gragoat\'a, Niter\'oi, Rio de Janeiro RJ, Brasil}
\email{pablo\_\,guarino@id.uff.br}

\thanks{The first author has been supported by ``Projeto Tem\'atico Din\^amica em Baixas Dimens\~oes'' FAPESP Grant  2016/25053-8, while the second author has been supported by Conselho Nacional de Desenvolvimento Cient\'ifico e Tecnol\'ogico (CNPq) and by Coordena\c{c}\~ao de Aperfei\c{c}oamento de Pessoal de N\'ivel Superior - Brasil (CAPES) grant 23038.009189/2013-05.}

\subjclass[2010]{Primary 37E10; Secondary 37E20, 37C40}

\keywords{Critical circle maps, quasisymmetric orbit-flexibility, skew product}

\begin{abstract} Two given orbits of a minimal circle homeomorphism $f$ are said to be \emph{geometrically equivalent} if there exists a \emph{quasisymmetric} circle homeomorphism identifying both orbits and commuting with $f$. By a well-known theorem due to Herman and Yoccoz, if $f$ is a smooth diffeomorphism with \emph{Diophantine} rotation number, then any two orbits are geometrically equivalent. As it follows from the \emph{a-priori bounds} of Herman and \'Swi\c{a}tek, the same holds if $f$ is a critical circle map with rotation number of \emph{bounded type}. By contrast, we prove in the present paper that if $f$ is a critical circle map whose rotation number belongs to a certain full Lebesgue measure set in $(0,1)$, then the number of equivalence classes is \emph{uncountable} (Theorem \ref{ThmA}). The proof of this result relies on the ergodicity of a two-dimensional skew product over the Gauss map. As a by-product of our techniques, we construct topological conjugacies between multicritical circle maps which are \emph{not} quasisymmetric, and we show that this phenomenon is \emph{abundant}, both from the topological and measure-theoretical viewpoints (Theorems \ref{ThmB} and \ref{ThmC}).
\end{abstract}

\maketitle

\vspace{-0.5cm}

\section{Introduction}

The dynamics of a minimal circle homeomorphism $f:S^1 \to S^1$ is topologically very homogeneous, in the sense that any two of its orbits look topologically the same. But are such orbits geometrically the same? This question is only meaningful if one properly defines the underlying concept of geometric equivalence. One also needs to assume that $f$ is sufficiently regular (i.e., has some reasonable degree of smoothness). Let us agree that the orbits $\mathcal{O}_f(x)$ and $\mathcal{O}_f(y)$ of two points $x,y \in S^1$ are \emph{geometrically equivalent} if there exists a self-conjugacy $h:S^1 \to S^1$ ($h \circ f=f \circ h$) which is a quasisymmetric homeomorphism carrying $\mathcal{O}_f(x)$ to $\mathcal{O}_f(y)$. So let us ask that question again: are two given orbits $\mathcal{O}_f(x)$ and $\mathcal{O}_f(y)$ geometrically equivalent?

The answer is easily seen to be ``yes'' if $f$ is smoothly conjugate to a rotation: this is the case, for instance, when $f$ is a smooth diffeomorphism with Diophantine rotation number (as it follows from the famous rigidity result of Herman \cite{hermanIHES} improved by Yoccoz \cite{yoccoz3}, and also by Katznelson and Ornstein \cite{KO}). In a sense to be made precise below, our main goal in the present paper is to show that the answer is ``almost always no'' when $f$ is a critical circle map. Precise statements will be given in sections \ref{secenunciadoThmA} to \ref{secboundedgeo} below.

Since the study presented here involves the notions of \emph{rigidity} and \emph{flexibility}, we proceed to say a few words about these concepts.

\subsubsection*{Rigidity} In one-dimensional dynamics, a current topic of research is to understand the connection, if any, between rigidity and renormalizability properties of interval or circle maps. For maps having a single critical point -- unimodal interval maps or critical circle homeomorphisms -- major advances
have been achieved in recent years, and a reasonably complete picture has emerged.
However, for maps having two or more critical points, much remains to be done.

In the present paper, we focus on invertible dynamics on the circle, more specifically on the study of
so-called \emph{multicritical circle maps\/}. By a multicritical circle map, we mean a reasonably smooth
orientation-preserving homeomorphism $f: S^1\to S^1$ having a finite number of critical points, all of which are
assumed to be {\it non-flat\/} (of {\it power-law\/} type, see Definition \ref{defmccm}). If $f$ has only one critical point, we sometimes say that $f$ is a {\it unicritical\/} circle map. In the present paper, by ``reasonably smooth'' we mean that
$f$ is at least $C^3$; this degree of smoothness allows us to use certain tools -- such as the so-called Yoccoz inequality, see Section \ref{sectools} -- which under current technology can only be established with the help of the {\it Schwarzian derivative\/} of $f$.

As it happens, rigidity can only be attained in the absence
of periodic points. Thus we assume throughout that $\mathrm{Per}(f)=\textrm{\O}$, which is tantamount to saying that $f$ has
{\it irrational rotation number\/}. A fundamental theorem due to Yoccoz \cite{yoccoz} states that every such multicritical circle map is topologically conjugate to the rigid rotation having the same rotation number. In particular, a topological conjugacy always exists between any two multicritical circle maps with the same (irrational)
rotation number.
The relevant rigidity questions here are, thus: (1) When is such conjugacy $C^1$? (2) Can this conjugacy be better than $C^1$?
Note that a necessary condition for these questions to be well-posed is that the conjugacy establishes a one-to-one
correspondence between the critical points of one map and the critical points of the other. Another necessary condition for $C^1$
rigidity is that the criticalities (or power-law exponents) of corresponding critical points under the conjugacy be equal.
It is conjectured that these necessary conditions are also sufficient (see \cite{EdF}). In the unicritical case, these questions
have been fully answered, thanks to the combined efforts of a number of mathematicians -- see \cite{EdsonThesis}, \cite{edsonETDS},
\cite{edsonwelington1}, \cite{edsonwelington2}, \cite{PabloThesis}, \cite{GMdM2015}, \cite{GdM2013}, \cite{khaninteplinsky},
\cite{khmelevyampolsky}, \cite{yampolsky1}, \cite{yampolsky2}, \cite{yampolsky3}, \cite{yampolsky4}. We summarize these contributions in the following statements: on one hand, any two $C^3$ circle homeomorphisms with the same irrational
rotation number of \emph{bounded type} and with a single critical point (of the same odd power-law type) are conjugate to each other
by a $C^{1+\alpha}$ circle diffeomorphism, for some universal $\alpha>0$ \cite{GdM2013}. On the other hand, any two $C^4$ circle
homeomorphisms with the same irrational rotation number and with a unique critical point (again, of the same odd type), are conjugate
to each other by a $C^1$ diffeomorphism \cite{GMdM2015}. Moreover, this conjugacy is a $C^{1+\alpha}$ diffeomorphism for a certain
set of rotation numbers that has full Lebesgue measure (see \cite[Section 4.4]{edsonwelington1} for its definition), but {\it does not\/} include all irrational rotation numbers (see the counterexamples in \cite{avila} and \cite[Section 5]{edsonwelington1}).

\subsubsection*{Quasisymmetry} As it turns out, an important first step towards rigidity is what is known as {\it quasisymmetric rigidity\/}. In the recent paper \cite{CvS}, this step is accomplished with the use of complex-analytic techniques in a fairly general context covering multimodal maps of the interval or the circle. For multicritical circle maps, this step was accomplished by purely real methods in \cite{EdF}. In that paper, it was proved that if $f$ and $g$ are two $C^3$ multicritical circle maps with the same irrational rotation number, and if there is a conjugacy between $f$ and $g$ that maps the critical points of $f$ to the critical points of $g$, then such conjugacy is a {\it quasisymmetric homeomorphism\/} (even if the criticalities at corresponding critical points are not the same!). A self-homeomorphism $h$ of the line $\mathbb{R}$ or the circle $S^1=\mathbb{R}/\mathbb{Z}$
is {\it quasisymmetric\/} if there exists a constant $M>1$ such that
\[
 \frac{1}{M}\leq \frac{h(x+t)-h(x)}{h(x)-h(x-t)} \leq M \ ,
\]
for all $x$ on the line or circle and all $t>0$.

\bigskip

\begin{figure}[h!]
\adjustbox{scale=0.8,center}{
\begin{tikzcd}[arrows=Rightarrow
]
&\mathrm{C^1\,\,diffeomorphism}\arrow{d}&\\
&\mathrm{Bi-Lipschitz} \arrow{dr}{} \arrow{dl}{} & \\
\mathrm{Q\,S} \arrow{d}&   &  \mathrm{Abs. \,\,Cont.} \arrow{d}\\%
\mathrm{Bi-H\ddot{o}lder}   &   &\mathrm{Bounded \,\,Variation}
\end{tikzcd}
}
\caption[diagrama]{\label{diagrama} Hierarchy involving \emph{quasisymmetry} and other classical notions of continuity for circle homeomorphisms.}
\end{figure}
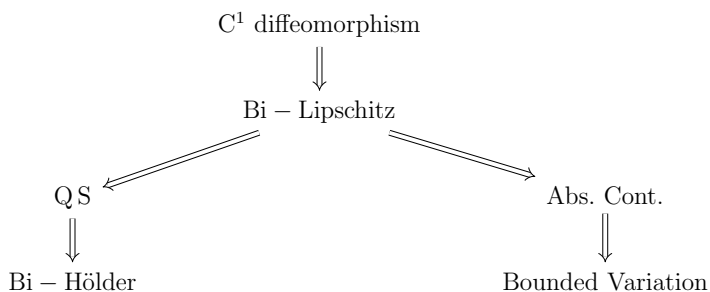

Quasisymmetry, as defined above, can be regarded as a very weak form of (geometric) regularity (see Figure \ref{diagrama}).
It is in fact so weak that one might guess that {\it any\/} conjugacy between $f$ and $g$ above will be quasisymmetric.
This guess is reinforced by a theorem due to Herman and \'Swi\c{a}tek (see \cite{H} and \cite{swiatek}) according to which
every multicritical circle map whose rotation number is an irrational of {\it bounded type\/} is quasisymmetrically conjugate to the corresponding rotation. Note, in particular, that such quasisymmetric conjugacies identify critical points with regular points.

However, the above guess is unfortunately wrong. Our purpose in the present paper is to show that a conjugacy between two critical circle maps is {\it almost never quasisymmetric\/}. The first goal is to identify a \emph{mechanism} which forces the breakdown of quasisymmetry for a topological conjugacy (see Lemma \ref{crossratiodistortion} in Section \ref{StheproofofA}). The second goal is to show that the above mechanism is \emph{abundant}, both from the topological and measure-theoretical viewpoints (see Theorem \ref{propresiduais} in Section \ref{sechomeos}). The precise statements of our results will be given below -- see Theorems \ref{ThmA}, \ref{ThmB} and \ref{ThmC}.

\subsubsection*{Orbit-flexibility} Some of our results can be stated in the light of the complementary concepts of {\it orbit-rigidity\/} and {\it orbit-flexibility\/}, which we
presently describe. We say that a minimal circle homeomorphism $f$ is {\it quasisymmetrically orbit-rigid\/} if for any pair of points
$x,y$ on the circle there exists a quasisymmetric homeomorphism $h_{x,y}$ which conjugates $f$ to itself and maps $x$ to $y$.
If $f$ is {\it not\/} quasisymmetrically orbit-rigid, we say that $f$ is {\it quasisymmetrically orbit-flexible\/}.
Thus, irrational rotations and sufficiently smooth circle diffeomorphisms with Diophantine rotation numbers are quasisymmetrically orbit-rigid.
Likewise, by the above-mentioned Herman-\'Swi\c{a}tek theorem, multicritical circle maps with rotation number of
{\it bounded\/} type are quasisymmetrically orbit-rigid.
By contrast, we will show in Theorem \ref{ThmA} that (uni)critical circle maps whose rotation numbers belong to a certain full-measure set are quasisymmetrically orbit-flexible (see also Proposition \ref{propuncount}). In particular, the centralizers of such maps in the group of all homeomorphisms of the circle contain non-quasisymmetric elements (see Section \ref{subsecflex} below).

\subsection{Statement for unicritical maps}\label{secenunciadoThmA} In the \emph{unicritical} case we have the following \emph{coexistence} phenomenon.

\begin{maintheorem}[Coexistence]\label{ThmA} There exists a full Lebesgue measure set $\bm{R}_A \subset [0,1]$ of irrational numbers with the following property: let $f$ and $g$ be two $C^{3}$ circle homeomorphisms with a single (non-flat) critical point (say, $c_f$ and $c_g$ respectively) and with $\rho(f)=\rho(g)\in\bm{R}_A$. For any given $x \in S^1$ let $h_x\in\mathrm{Homeo}^+(S^1)$ be the topological conjugacy between $f$ and $g$ determined by $h_x(x)=c_g$. Let $\mathcal{A}$ be the set of points $x \in S^1$ such that the homeomorphism $h_x$ is quasisymmetric, and let $\mathcal{B}=S^1\setminus\mathcal{A}$ be its complement in the unit circle (that is, $\mathcal{B}$ is the set of points $x \in S^1$ such that the homeomorphism $h_x$ is \emph{not} quasisymmetric). Then $\mathcal{A}$ is dense in $S^1$, while $\mathcal{B}$ contains a residual set (in the sense of Baire) and it has full $\mu_f$-measure, where $\mu_f$ denotes the unique $f$-invariant probability measure.
\end{maintheorem}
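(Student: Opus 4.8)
The plan is to pin down, first, the *mechanism* behind the failure of quasisymmetry. Fix $f$, $g$ as in the statement and consider the conjugacy $h_x$ with $h_x(x)=c_g$. The relevant obstruction is the distortion of cross-ratios: if $h$ is quasisymmetric then it distorts the cross-ratio of four points lying in a bounded-geometry configuration by a bounded factor. Near the critical point $c_g$ of $g$, the map $g$ itself contracts/expands cross-ratios of appropriately placed quadruples by an amount governed by the criticality $d>1$, whereas near the *regular* preimage $h_x^{-1}(c_g)=x$ of that critical point the map $f$ has bounded nonlinearity. Iterating and comparing along the dynamical partitions (the renormalization intervals $I_n(x)$ and $I_n(c_g)$ determined by the continued fraction expansion of $\rho:=\rho(f)=\rho(g)$), one sees that $h_x$ is quasisymmetric if and only if a certain *Birkhoff-type sum* — measuring how often, and how deeply, the orbit of $x$ visits the scales at which the critical point of $g$ produces cross-ratio distortion — stays bounded. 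This is exactly the content of the ingredients the paper announces (Lemma~\ref{crossratiodistortion} and the a-priori/real-bounds and Yoccoz-inequality tools of Section~\ref{sectools}). So the first step is: reduce ``$h_x$ is QS'' to boundedness of an explicit cocycle-sum along the orbit of $x$ under $f$, equivalently (after conjugating $f$ to the rotation $R_\rho$ by Yoccoz's theorem, which only costs a homeomorphism and does not affect the combinatorics) along the orbit of the point $t=\phi(x)$ under $R_\rho$, where $\phi$ is the topological conjugacy $f\to R_\rho$.

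Second, I would translate this cocycle into a *skew product over the Gauss map*. The relevant quantity attached to the orbit of $t$ under $R_\rho$ decomposes scale-by-scale: at renormalization level $n$ one needs to know the position of $R_\rho^{q_n}(t)$ relative to the two subintervals of the level-$n$ partition containing the critical value $R_\rho^{q_n}(c)$, and these relative positions evolve, as $n\to n+1$, by a fractional-linear map determined by the partial quotient $a_{n+1}$ of $\rho$ — i.e. by the Gauss map $G$. Thus the pair (renormalization data of $\rho$, relative position of the marked point) is the orbit of a skew product $T(\rho,s)=(G\rho, \psi_\rho(s))$ on $[0,1]\times \mathbb{T}$ (or on an appropriate two-dimensional quotient), and the QS-set $\mathcal A$ corresponds to the set of $t$ whose $T$-orbit has a bounded ``weighted return'' statistic. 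The key dynamical input, which the abstract explicitly flags, is the *ergodicity* of this two-dimensional skew product with respect to a natural invariant measure (Gauss measure in the base, Lebesgue in the fibre). Granting ergodicity, a standard Birkhoff/Borel–Cantelli argument yields, for $\rho$ in a full-measure set $\bm R_A$ of irrationals: for $\mu_f$-a.e. $x$ the weighted-return sum is unbounded (in fact returns to "bad" scales happen infinitely often with the optimal logarithmic rate), hence $h_x$ is *not* QS — giving full $\mu_f$-measure of $\mathcal B$ — while density of $\mathcal A$ and residuality of $\mathcal B$ come from genericity considerations on the fibre: periodic (eventually periodic) fibre-orbits correspond to combinatorially "self-similar" positions of $x$ for which the cocycle is bounded (these $x$ are dense, since $\mathcal A$ contains, e.g., all iterated preimages of a suitable self-similar point, or the grand orbit of the critical point of $f$ itself when that orbit is disjoint from a critical one — hence $\mathcal A\supseteq\mathcal O_f(c_f)$ in the generic case), whereas a Baire-category argument on the fibre (the set of fibre points with unbounded weighted returns is a dense $G_\delta$) transfers to a residual set of $x$ in $S^1$.

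Concretely the key steps, in order, are: (1) prove Lemma~\ref{crossratiodistortion}, reducing $h_x\in\mathrm{QS}$ to boundedness of an explicit cocycle along $\mathcal O_f(x)$, using cross-ratio distortion estimates together with the real a-priori bounds and the Yoccoz inequality; (2) conjugate $f$ to $R_\rho$ and rewrite the cocycle in terms of the continued fraction dynamics, identifying it with a Birkhoff sum for a skew product $T$ over the Gauss map; (3) establish ergodicity of $T$ with respect to Gauss$\,\times\,$Lebesgue (this is where a Hopf-chain / cutting-sequence argument or an Anosov-type mixing estimate is needed); (4) apply Birkhoff's theorem plus a Borel–Cantelli estimate to get the full-measure statement for $\mathcal B$ on a full-measure set $\bm R_A$ of $\rho$; (5) run the complementary Baire-category argument on the fibre to get residuality of $\mathcal B$, and exhibit an explicit dense family in $\mathcal A$ (self-similar positions / the critical orbit) to get density of $\mathcal A$. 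I expect step (3), the ergodicity of the two-dimensional skew product over the Gauss map, to be the main obstacle: proving ergodicity of such fibred systems is delicate because one cannot invoke a spectral gap directly, and one must either verify a suitable Hopf-type density of coboundaries along the continued-fraction renormalization or exploit the precise fractional-linear form of the fibre maps to get enough expansion/mixing. Steps (1)–(2) are real-bounds bookkeeping (somewhat technical but routine given the cited tools), and steps (4)–(5) are standard once (3) is in place.
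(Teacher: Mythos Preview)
Your architecture is essentially the paper's: a skew product over the Gauss map, ergodicity giving generic dense orbits, and a cross-ratio distortion criterion for non-quasisymmetry. However, several of your details are off in ways that would cost you time or lead you astray.

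First, the non-QS mechanism (Lemma~\ref{crossratiodistortion}) is \emph{not} a Birkhoff-sum/cocycle-boundedness criterion. The paper does not accumulate distortion along the orbit; it exhibits a single scale at each of infinitely many levels where the cross-ratio distortion of $h$ is already $\asymp a_{n+1}^2$. The mechanism is a direct comparison: on the $f$-side, the ancestor $y_n\in I_n(c_f)$ of $y=h^{-1}(c_g)$ lies in some fundamental domain $\Delta^{(k)}$ of the almost parabolic map $f^{q_{n+1}}|_{I_n}$, and Yoccoz's Lemma~\ref{yoccozlemma} gives $|\Delta^{(k)}|\asymp |I_n|/\min\{k,a_{n+1}-k\}^2$. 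If the trail satisfies $|\alpha_{n+1}-\tfrac12|<\tfrac14$, then $k\asymp a_{n+1}$ and $|\Delta^{(k)}|\asymp |I_n|/a_{n+1}^2$. On the $g$-side, $h(\Delta^{(k)})$ contains the critical-point ancestor $w_n$, hence is a \emph{critical spot} of $g^{q_{n+1}}$, and Lemma~\ref{criticalspots} says every critical spot is comparable to the full return interval: $|h(\Delta^{(k)})|\asymp |h(I_n)|$. Combined with the real bounds on the flanking intervals, this yields $\crd(h;\Delta^{(k)},I_n)\asymp a_{n+1}^2$, which blows up when $\rho_{n+1}\to 0$. So the criterion is purely a \emph{recurrence} condition on the trail: it must visit the region $\{\rho\ \textrm{small},\ \alpha\ \textrm{near}\ \tfrac12\}$ infinitely often.

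Second, because the criterion is recurrence and not a Birkhoff sum, you need neither Birkhoff's theorem nor Borel--Cantelli. What you need is Proposition~\ref{genorbdensas}: a residual, full-measure set $\mathcal{G}_0$ of points in $[0,1]\times[-1,1]$ with dense $T$-orbit. This follows from ergodicity (Theorem~\ref{skewprodthm}) plus the Markov/topological-exactness structure of $T$; then Fubini produces the full-measure set $\bm R_A$ of rotation numbers, and Lemma~\ref{leminhacomb} identifies the renormalization trail with the $T$-orbit, giving $\mathcal{B}_{c_f}\subset\mathcal{B}$ directly.

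Third, two smaller corrections: the fibre maps $T_\rho$ are piecewise \emph{affine} (not fractional-linear) on the fibre $[-1,1]$ (not $\mathbb{T}$), with slopes $-1$, $-1/(\rho G(\rho))$, and $-1/\rho$; and density of $\mathcal{A}$ comes simply from the fact that $c_f\in\mathcal{A}$ (the critical-to-critical conjugacy is known to be QS), $\mathcal{A}$ is $f$-invariant (since $h_x=h_{f(x)}\circ f$ and $f$ is QS), and $f$ is minimal. Your instinct that ergodicity of the skew product is the crux is correct; the paper proves it via a countable Markov partition and a Jacobian-distortion bound (Proposition~\ref{boundjacobdist}), not via Hopf chains.
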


\begin{remark} A somewhat related coexistence phenomenon occurs in the context of Lorenz maps, and also in the context of circle maps with flat intervals (see \cite{MPW} and references therein).
\end{remark}

\begin{remark} The proof of Theorem \ref{ThmA}, to be given in \S\ref{StheproofofA}, still works if \emph{one of the two} maps has more than one critical point.
\end{remark}

Let us pose two questions that arise from Theorem \ref{ThmA}.

\begin{question} Denote by $\bt\subset(0,1)$ the set of irrational numbers of \emph{bounded type}. As already mentioned, a theorem of Herman \cite{H} implies that $\bm{R}_A$ is disjoint from $\bt$ (since in this case \emph{all} conjugacies are quasisymmetric, see Section \ref{subsecflex} below). Is it true that $\bm{R}_A=[0,1]\setminus(\Q\cup\bt)$? Is it true, at least, that $\bm{R}_A$ contains a residual subset of $[0,1]$?
\end{question}

\begin{question}\label{pergconjA} Note that both sets $\mathcal{A}$ and $\mathcal{B}$ defined in Theorem \ref{ThmA} are $f$-invariant. Indeed, this follows from the identity $h_x=h_{f(x)} \circ f$ and the fact that $f$ itself (hence $f^n$ for all $n\in \mathbb{Z}$) is a quasisymmetric homeomorphism. As explained above, the critical point of $f$ belongs to $\mathcal{A}$ (and then its whole orbit), since $h_{c_f}$ is always a quasisymmetric homeomorphism (this was proved by Yoccoz in an unpublished work, see \cite[Corollary 4.6]{edsonwelington1}). It could be the case that $\mathcal{A}=\big\{f^n(c_f):n\in\Z\big\}$. Is it true, at least, that $\mathcal{A}$ is a countable set?
\end{question}

In Section \ref{subsecflex} below we describe more precisely the notion of orbit-flexibility, and state some straightforward consequences of Theorem \ref{ThmA}. In Section \ref{secboundedgeo} we state some further consequences of Theorem \ref{ThmA}, this time involving geometric bounds for dynamical partitions (see Theorem \ref{ThmD}).

\subsection{Statements for multicritical maps} Given an irrational number $\rho$, we denote by $a_n=a_n(\rho)$, $n\in \mathbb{N}$, the sequence of its partial quotients (see Section \ref{sechomeos}). Let us consider the set $\mathbb{E}_\infty$ consisting of all numbers $\rho\in (0,1)$ for which the corresponding $a_n$'s are {\it even\/} and $\lim_{n\to\infty}{a_n}\;=\;\infty$. It is easy to see that $\mathbb{E}_\infty$ is a meager set whose Lebesgue measure is equal to zero. Despite being both topologically and measure-theoretically negligible, this set does contain some interesting Diophantine, Liouville and transcendental numbers, see Section \ref{sec:eventype}. Our second goal in the present paper is to prove the following result.

\begin{maintheorem}\label{ThmB} There exists a set\, $\mathcal{G}\subset[0,1]^2$, which contains a residual set (in the Baire sense) and has full Lebesgue measure, for which the following holds. Let $f$ and $g$ be two $C^3$ multicritical circle maps with the same irrational rotation number $\rho$ and such that the map $f$ has exactly one critical point $c_0$, whereas the map $g$ has exactly two critical points $c_1$ and $c_2$. Denote by $\alpha$ and $1-\alpha$ the $\mu_g$-measures of the two connected components of $S^1\setminus\{c_1,c_2\}$, where $\mu_g$ denotes the unique invariant probability measure of $g$. If $(\rho,\alpha)$ belongs to $\mathcal{G}$, then the topological conjugacy between $f$ and $g$ that takes $c_0$ to $c_1$ is not quasisymmetric. Moreover, the set of rotation numbers $\bm{R}_B=\{\rho:\,(\rho,\alpha)\in \mathcal{G}\ \textrm{for some}\ \alpha\}$ contains the set $\mathbb{E}_\infty$ defined above.
\end{maintheorem}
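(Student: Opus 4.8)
\textbf{Strategy and the set $\mathcal{G}$.} The plan is to combine the cross-ratio distortion criterion of Lemma~\ref{crossratiodistortion} with the a priori (real) bounds for multicritical circle maps and the Yoccoz inequality (Section~\ref{sectools}), the relevant parameters being controlled through an ergodic two-dimensional skew product over the Gauss map. Let $\phi_f,\phi_g\in\mathrm{Homeo}^+(S^1)$ be the topological conjugacies carrying $f$ and $g$ to the rigid rotation $R_\rho$, normalized by $\phi_f(c_0)=\phi_g(c_1)=0$. Then the conjugacy from $f$ to $g$ sending $c_0$ to $c_1$ is $h=\phi_g^{-1}\circ\phi_f$, and, writing $\alpha=\phi_g(c_2)$, the point $z_0:=h^{-1}(c_2)=\phi_f^{-1}(\alpha)$ is a \emph{regular} point of $f$ (since $c_0$ is its only critical point and $z_0\ne c_0$). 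Thus $h$ carries a regular point of $f$ onto a critical point of $g$. The number $\alpha$ encodes the location of $c_2$ in the successive dynamical partitions of $g$ (equivalently of $z_0$ in those of $f$); as $n$ grows, the pair (rotation number, relative position of $c_2$ inside its level-$n$ atom) evolves under a skew product over the Gauss map $\rho\mapsto\{1/\rho\}$, with fibre coordinate $\theta_n=\theta_n(\rho,\alpha)\in[0,1]$. I would take $\mathcal{G}$ to be the set of pairs $(\rho,\alpha)$ whose skew-product orbit returns infinitely often to the region where $\theta_n$ is comparable to $1/a_n(\rho)$, i.e.\ where, at a sequence of levels $n_k\to\infty$, the critical point $c_2$ is as eccentric inside the relevant dynamical interval as a factor $\asymp a_{n_k}$ allows. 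That this $\mathcal{G}$ contains a residual set of full Lebesgue measure in $[0,1]^2$ is precisely what I would extract from Theorem~\ref{propresiduais}: full measure from ergodicity of the skew product (Birkhoff's theorem for the indicator of a positive-measure target), residuality from its topological transitivity; then $\bm R_B$ automatically contains a residual set of full measure in $[0,1]$.

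\textbf{Breaking quasisymmetry.} Assume $(\rho,\alpha)\in\mathcal{G}$ and fix $n_k\to\infty$ realizing the recurrence. Near $z_0$ I construct a quadruple $Q_{n_k}=\{w_1<w_2<w_3<w_4\}$ whose points are chosen among $z_0$ itself and endpoints of atoms of the level-$n_k$ dynamical partition of $f$; using the a priori bounds (all adjacent atoms of a dynamical partition have comparable lengths, even across the transition points of the critical orbit) one can arrange that the cross-ratio of $Q_{n_k}$ stays bounded away from $0$ and $1$, the Yoccoz inequality and the $C^3$ hypothesis being used to bound the nonlinearity of the iterates of $f$ realizing these atoms (the orbit involved meets the single critical point $c_0$ only once, in a combinatorially deep position). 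Its image $h(Q_{n_k})$ is a quadruple near $c_2$, one of whose points is $c_2$ itself; because $c_2$ is a non-flat critical point of $g$ of exponent $d_2>1$ and, by $(\rho,\alpha)\in\mathcal{G}$, the orbit-combinatorics force $c_2$ to sit extremely eccentrically inside the corresponding level-$n_k$ dynamical interval, the power law at $c_2$ makes the finer-scale geometry there degenerate, so that the cross-ratio of $h(Q_{n_k})$ tends to $0$. By Lemma~\ref{crossratiodistortion}, a quasisymmetric homeomorphism cannot distort such configurations unboundedly; hence $h$ is not quasisymmetric.

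\textbf{The set $\mathbb{E}_\infty$.} Given $\rho\in\mathbb{E}_\infty$ I choose $\alpha=\tfrac12$. Because every partial quotient of $\rho$ is even, the parities of the denominators $q_n$ stabilize in a way that makes $\tfrac12$ a renormalization-stationary position inside its dynamical atom at every level, so $\theta_n(\rho,\tfrac12)$ stays uniformly bounded away from $0$ and $1$ while $a_n(\rho)\to\infty$. Hence the condition defining $\mathcal{G}$ holds along every large $n$, giving $(\rho,\tfrac12)\in\mathcal{G}$, and therefore $\mathbb{E}_\infty\subseteq\bm R_B$.

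\textbf{Main obstacle.} The crux is the second step: converting the purely combinatorial hypothesis $(\rho,\alpha)\in\mathcal{G}$ into the quantitative assertion that the cross-ratio of $h(Q_{n_k})$ tends to $0$ while that of $Q_{n_k}$ stays pinched away from $0$ and $1$. This requires a careful, simultaneous bookkeeping of nested dynamical partitions for $f$ and for $g$, a combined use of the a priori bounds and the Yoccoz inequality to control the distortion of long iterates away from the critical points, and a precise handling of the power-law behaviour of $g$ at $c_2$ on very unbalanced scales — pulling the relevant configuration back near the critical point $c_1$ along a bounded-distortion iterate is likely the cleanest way to expose the power law. A secondary but essential difficulty, which I would isolate in the proof of Theorem~\ref{propresiduais}, is establishing ergodicity and transitivity of the relevant skew product over the Gauss map, which is exactly what makes $\mathcal{G}$ simultaneously residual and of full Lebesgue measure.
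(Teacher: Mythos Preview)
Your overall architecture is right: a skew product over the Gauss map governs the pair $(\rho_n,\alpha_n)$, its ergodicity/transitivity yield the full-measure/residual set $\mathcal{G}$, and the non-quasisymmetry of $h$ is detected via unbounded cross-ratio distortion. But the specific geometric mechanism you propose is inverted, and as written the argument does not go through.

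\medskip

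\textbf{The condition on $\alpha_n$ is backwards.} You take $\mathcal{G}$ to be the set of $(\rho,\alpha)$ whose trail satisfies $\theta_n\asymp 1/a_n$ infinitely often (``eccentric'' position). The paper does the opposite: it requires $|\alpha_{n_i+1}-\tfrac12|<\tfrac14$ together with $\rho_{n_i+1}\to 0$, i.e.\ the ancestor $y_{n_i}$ sits near the \emph{middle} of $I_{n_i}(c_0)$ at levels where $a_{n_i+1}$ is large. The reason is Yoccoz's Lemma~\ref{yoccozlemma}: the fundamental domains $\Delta^{(j)}$ of the almost-parabolic return $f^{q_{n+1}}|_{I_n}$ have length $\asymp |I_n|/\min\{j,a_{n+1}-j\}^2$, so the one containing $y_{n_i}$ is \emph{tiny} ($\asymp |I_{n_i}|/a_{n_i+1}^2$) precisely when $y_{n_i}$ is central, and comparable to $|I_{n_i}|$ when $y_{n_i}$ is eccentric. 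With your eccentric condition the $f$-side cross-ratio is already $\asymp 1$ and nothing is gained.

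\medskip

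\textbf{The $g$-side claim is wrong.} You assert that the power law at $c_2$ ``makes the finer-scale geometry there degenerate, so that the cross-ratio of $h(Q_{n_k})$ tends to $0$''. In fact the opposite happens: since $w_{n_i}=h(y_{n_i})$ is a critical point of $g^{q_{n_i+1}}$, the interval $h(\Delta_{n_i})$ is a \emph{critical spot}, and Lemma~\ref{criticalspots} forces $|h(\Delta_{n_i})|\asymp |h(I_{n_i})|$. Hence $[h(\Delta_{n_i}),h(I_{n_i})]\asymp 1$ regardless of how $c_2$ sits combinatorially. The asymmetry that breaks quasisymmetry is thus: $f$-side cross-ratio $\asymp a_{n_i+1}^{-2}\to 0$ (Yoccoz, central position), $g$-side cross-ratio $\asymp 1$ (critical spot). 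This is exactly the content of Lemma~\ref{crossratiodistortion}, which you cite but then paraphrase incorrectly as a generic QS--cross-ratio principle; in the paper it is the specific statement that the trail condition $(\rho_{n_i+1}\to 0,\ |\alpha_{n_i+1}-\tfrac12|<\tfrac14)$ forces $h\notin\mathrm{QS}$.

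\medskip

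\textbf{Internal inconsistency on $\mathbb{E}_\infty$.} Your argument for $\mathbb{E}_\infty$ is essentially correct and matches Proposition~\ref{evennumbers}: with $\alpha_0=\tfrac12$ one gets $\alpha_{2i}\to\tfrac12$ while $a_n\to\infty$. But note that this is the \emph{central} condition, not your eccentric one --- so your own $\mathbb{E}_\infty$ computation contradicts your definition of $\mathcal{G}$.

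\medskip

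Once the target region for the trail is corrected to $\{(\rho,\alpha): \rho\ \textrm{small},\ |\alpha-\tfrac12|<\tfrac14\}$, the paper's proof is immediate: take $\mathcal{G}=\mathcal{G}_0\cup(\mathbb{E}_\infty\times\{\tfrac12\})$ with $\mathcal{G}_0$ from Proposition~\ref{genorbdensas}, and apply Lemma~\ref{crossratiodistortion}.
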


The proofs of both Theorem \ref{ThmA} and Theorem \ref{ThmB} will be given in Section \ref{StheproofofA}. In Section \ref{S:real} we will prove the following auxiliary result.

\medskip

\noindent{\bf The $C^{\infty}$ Realization Lemma.}\, {\it For any given $(\rho,\alpha)\in\big([0,1]\setminus\Q\big)\times(0,1)$ there exists a $C^{\infty}$ bi-critical circle map with rotation number $\rho$, a unique invariant Borel probability measure $\mu$ and
with exactly two critical points $c_1$ and $c_2$ such that the two connected components
of $S^1\setminus\{c_1,c_2\}$ have $\mu$-measures equal to $\alpha$ and $1-\alpha$ respectively.}

\begin{remark} It is possible to prove a similar \emph{Analytic Realization Lemma} using the results of Zakeri in \cite[Section 7]{zak}.
\end{remark}

Together with Theorem \ref{ThmB}, the $C^{\infty}$ Realization Lemma implies our third main result.

\begin{maintheorem}\label{ThmC} There exists a set\, $\bm{R}_C \subset [0,1]$ of irrational numbers, which contains a residual set (in the Baire sense), has full Lebesgue measure and contains $\mathbb{E}_\infty$, for which the following holds.
For each $\rho\in\bm{R}_C$, there exist two $C^{\infty}$ multicritical circle maps $f, g: S^1\to S^1$ with the following properties:
\begin{enumerate}
 \item Both maps have the same rotation number $\rho$;
 \item The map $f$ has exactly one critical point $c_0$, whereas the map $g$ has exactly two critical points $c_1$ and $c_2$;
 \item The topological conjugacy between $f$ and $g$ that takes $c_0$ to $c_1$ is not quasisymmetric.
\end{enumerate}
\end{maintheorem}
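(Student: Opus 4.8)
The plan is to deduce Theorem \ref{ThmC} directly from the combination of Theorem \ref{ThmB} and the $C^{\infty}$ Realization Lemma, so the argument is essentially a bookkeeping step that threads the two results together. First I would set $\bm{R}_C = \bm{R}_B$, where $\bm{R}_B = \{\rho : (\rho,\alpha) \in \mathcal{G}\ \text{for some}\ \alpha\}$ is the projection onto the first coordinate of the set $\mathcal{G} \subset [0,1]^2$ furnished by Theorem \ref{ThmB}. The properties claimed for $\bm{R}_C$ then follow from the corresponding properties of $\mathcal{G}$: since $\mathcal{G}$ has full Lebesgue measure in $[0,1]^2$, Fubini's theorem gives that for a.e.\ $\rho \in [0,1]$ the slice $\mathcal{G}_\rho = \{\alpha : (\rho,\alpha) \in \mathcal{G}\}$ has full measure in $[0,1]$, hence is nonempty, so $\bm{R}_C = \bm{R}_B$ has full Lebesgue measure; similarly, since $\mathcal{G}$ contains a residual subset of $[0,1]^2$, a Kuratowski--Ulam (Baire-category Fubini) argument shows its projection contains a residual subset of $[0,1]$; and the inclusion $\mathbb{E}_\infty \subset \bm{R}_B = \bm{R}_C$ is the last assertion of Theorem \ref{ThmB}. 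One should also note that $\bm{R}_B$ consists of irrational numbers because $\mathcal{G}$ is built inside the irrationals (Theorem \ref{ThmB} is stated only for irrational $\rho$), which can be arranged from the outset.

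Next, fix $\rho \in \bm{R}_C$. By construction there exists $\alpha = \alpha(\rho) \in (0,1)$ with $(\rho,\alpha) \in \mathcal{G}$. Apply the $C^{\infty}$ Realization Lemma to the pair $(\rho,\alpha) \in ([0,1]\setminus\Q)\times(0,1)$: this produces a $C^{\infty}$ bi-critical circle map $g : S^1 \to S^1$ with rotation number $\rho$, with exactly two critical points $c_1$ and $c_2$, and whose unique invariant probability measure $\mu_g$ assigns masses $\alpha$ and $1-\alpha$ to the two components of $S^1 \setminus \{c_1,c_2\}$. For the map $f$, take any $C^{\infty}$ unicritical circle map with rotation number $\rho$ and a single non-flat critical point $c_0$; such maps are standard (one may, for instance, perturb a rigid rotation near one point to create a single cubic critical point, and then conjugate so as to realize the prescribed rotation number — or simply invoke the unicritical case of the Realization Lemma's construction). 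Properties (1) and (2) of Theorem \ref{ThmC} then hold by construction. For property (3), observe that $f$ and $g$ are two $C^3$ multicritical circle maps with the same irrational rotation number $\rho$, with $f$ having one critical point and $g$ having two, and with $(\rho,\alpha) \in \mathcal{G}$; hence Theorem \ref{ThmB} applies verbatim and tells us that the topological conjugacy between $f$ and $g$ taking $c_0$ to $c_1$ is not quasisymmetric. This completes the proof.

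I do not anticipate a genuine obstacle here, since all the heavy lifting is done in Theorem \ref{ThmB} and the Realization Lemma; the only points requiring mild care are: (i) making sure the category statement for $\bm{R}_C$ really does follow from that for $\mathcal{G}$ — this is where Kuratowski--Ulam is invoked, and one should check that the residual subset of $\mathcal{G}$ can be taken to be a countable intersection of open dense sets whose projections remain open and dense, which holds because projection of an open dense set in a product of Baire spaces is open and dense; and (ii) ensuring that the existence of a $C^{\infty}$ unicritical circle map with an arbitrary prescribed irrational rotation number is either cited or follows from the construction used to prove the Realization Lemma (indeed the same construction yields the unicritical case). If one wishes to be completely self-contained, the cleanest route is to remark that the proof of the $C^{\infty}$ Realization Lemma, specialized to a single critical point, directly furnishes $f$.
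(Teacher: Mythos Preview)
Your proof is correct and matches the paper's approach exactly: the paper simply asserts that Theorem~\ref{ThmC} follows by combining Theorem~\ref{ThmB} with the $C^\infty$ Realization Lemma, and you have filled in precisely those details. One minor caveat: your parenthetical alternative to Kuratowski--Ulam (checking that each $\pi_1(U_n)$ is open and dense) does not by itself show that $\bm{R}_B \supseteq \pi_1\big(\bigcap_n U_n\big)$ is residual, since projection does not commute with countable intersection; but your primary invocation of Kuratowski--Ulam is the right argument and suffices.
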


\subsection{Quasisymmetric orbit-flexibility of critical circle maps}\label{subsecflex} Following Yoccoz \cite{yoccoz2,yoccoz22}, we denote by $Z_0(f)=\{h\in \mathrm{Homeo}^+(S^1):\, h\circ f= f\circ h\}$ the \emph{centralizer}
of $f$ in $\mathrm{Homeo}^+(S^1)$. We also denote by $\mathrm{QS}(S^1)$ the subgroup of  $\mathrm{Homeo}^+(S^1)$ consisting of
those homeomorphisms of the circle that are quasisymmetric. In this language, Theorem \ref{ThmA} has the following immediate consequence (see also \cite[Section 4]{AChE20} for recent results on the centralizers of some analytic circle maps).

\begin{customthm}{} If $f:S^1\to S^1$ is a unicritical circle map with $\rho(f)\in\bm{R}_A$, then $f$ is quasisymmetrically orbit-flexible.
 In particular, $Z_0(f)\setminus \mathrm{QS}(S^1)\neq \mathrm{\O}$.
\end{customthm}

In fact, much more can be obtained from Theorem \ref{ThmA}. First, we need a definition. Let $f:S^1\to S^1$ be a minimal circle homeomorphism.

\begin{definition}\label{defgeomeq} If $x,y\in S^1$, we say that $x$ is $f$-equivalent to $y$, and write $x\sim_f y$, if there exists a quasisymmetric homeomorphism $h\in Z_0(f)$ such that $h(x)=y$.
\end{definition}

It is clear that $\sim_f$ is an equivalence relation, so we can consider the set of equivalence classes $X_f=S^1/\sim_f$. Below, during the proof of Proposition \ref{propuncount}, we will use the following observation.

\begin{lemma}\label{leminhaclases} All equivalence classes are homeomorphic to each other.
\end{lemma}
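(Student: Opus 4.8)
The plan is to exhibit, for any two points $x,y\in S^1$, an explicit homeomorphism between the equivalence classes $[x]_{\sim_f}$ and $[y]_{\sim_f}$. The natural candidate comes from Yoccoz's theorem: since $f$ is a minimal circle homeomorphism, it is topologically conjugate to the rigid rotation $R_\rho$ with $\rho=\rho(f)$, so in particular $f$ acts minimally and every orbit is dense. The key structural observation is that $\sim_f$ is defined using the centralizer $Z_0(f)$, which acts on $S^1$; I would first record that this action is transitive on each equivalence class essentially by definition, and then look for a single homeomorphism of $S^1$ that conjugates the whole relation $\sim_f$ to itself while moving $[x]$ to $[y]$.

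First I would try the most economical approach: show that for \emph{any} $x,y$ there is $\phi\in Z_0(f)$ (not necessarily quasisymmetric!) with $\phi(x)=y$, and that such a $\phi$ automatically maps $[x]_{\sim_f}$ homeomorphically onto $[y]_{\sim_f}$. The second part is immediate: if $h\in Z_0(f)\cap\mathrm{QS}(S^1)$ with $h(x)=x'$, then $\phi h\phi^{-1}\in Z_0(f)$ (since $Z_0(f)$ is a group and $\phi$ commutes with $f$) and it is quasisymmetric if and only if $h$ is — but wait, $\phi$ itself need not be quasisymmetric, so $\phi h\phi^{-1}$ need not be either. So this naive conjugation does not obviously preserve quasisymmetry. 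The fix is to use instead the canonical identification of each class with a subset of the rotation picture: let $\psi:S^1\to S^1$ be the topological conjugacy $\psi f\psi^{-1}=R_\rho$; then $Z_0(f)=\psi^{-1}Z_0(R_\rho)\psi$, and $Z_0(R_\rho)$ is exactly the group of rotations (for $\rho$ irrational), which acts transitively and by \emph{isometries}. So $x\sim_f y$ holds precisely when the rotation $R_{\psi(y)-\psi(x)}$, transported back by $\psi$, is quasisymmetric — i.e., when $\psi^{-1}\circ R_{t}\circ\psi\in\mathrm{QS}(S^1)$ where $t=\psi(y)-\psi(x)$. Thus $[x]_{\sim_f}=\psi^{-1}\big(\psi(x)+T\big)$ where $T=\{t\in S^1:\psi^{-1}R_t\psi\in\mathrm{QS}(S^1)\}$ is a subgroup of $S^1$ independent of $x$. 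The class through $x$ is the $\psi$-preimage of a translate of the \emph{fixed} set $T$, so the map $\psi^{-1}\circ R_{\psi(y)-\psi(x)}\circ\psi$ restricted to $[x]_{\sim_f}$ is a homeomorphism onto $[y]_{\sim_f}$ — it is a homeomorphism because $\psi$ is, and it lands in the right set because $R_{\psi(y)-\psi(x)}$ translates $\psi(x)+T$ to $\psi(y)+T$.

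Concretely, the proof I would write is: fix the conjugacy $\psi$ with $\psi\circ f=R_\rho\circ\psi$; set $T=\{t\in\mathbb{R}/\mathbb{Z}:\psi^{-1}\circ R_t\circ\psi\in\mathrm{QS}(S^1)\}$ and check it is a subgroup of $\mathbb{R}/\mathbb{Z}$ (closure under addition and inversion follows since $\mathrm{QS}(S^1)$ is a group and $(\psi^{-1}R_t\psi)(\psi^{-1}R_s\psi)=\psi^{-1}R_{t+s}\psi$); observe $\psi^{-1}R_t\psi\in Z_0(f)$ for every $t$ because $R_t$ commutes with $R_\rho$, and conversely every element of $Z_0(f)$ has this form since $Z_0(R_\rho)$ consists of rotations; conclude $[x]_{\sim_f}=\psi^{-1}(\psi(x)+T)$; finally, for arbitrary $x,y$, the homeomorphism $\psi^{-1}\circ R_{\psi(y)-\psi(x)}\circ\psi$ of $S^1$ restricts to a homeomorphism $[x]_{\sim_f}\to[y]_{\sim_f}$.

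I expect the main (minor) obstacle to be the standard fact that the centralizer of an irrational rotation in $\mathrm{Homeo}^+(S^1)$ is exactly the rotation group — this is classical (minimality of $R_\rho$ forces any commuting homeomorphism to be a rotation, by a Gottschalk–Hedlund / unique-ergodicity argument), and I would simply cite it or sketch the one-line proof. Everything else is formal group theory plus the trivial remark that $\psi$ carries $Z_0(f)$ isomorphically onto $Z_0(R_\rho)$. No delicate analysis is needed here; the content of the excerpt that makes this lemma \emph{useful} (rather than vacuous) lies entirely in Theorem~\ref{ThmA}, which guarantees $T\neq\mathbb{R}/\mathbb{Z}$ and in fact that $\mathcal{A}\neq S^1$, but the lemma itself holds for any minimal $f$.
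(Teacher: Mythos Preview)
Your proof is correct and follows essentially the same route as the paper's: both arguments use that $Z_0(f)$ acts simply transitively on $S^1$ (via the conjugacy $\psi$ to the rotation group), and the homeomorphism $[x]\to[y]$ you write down, namely $\psi^{-1}R_{\psi(y)-\psi(x)}\psi$, is exactly the paper's $F_y^{-1}\circ F_x$ once one unwinds the definition $F_x(y)=h_{x,y}^{-1}(c)$. Your presentation is a bit more transparent in that it identifies each class as a coset $\psi^{-1}(\psi(x)+T)$ of a fixed subgroup $T\subset\mathbb{R}/\mathbb{Z}$, whereas the paper leaves this structure implicit.
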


\begin{proof}[Proof of Lemma \ref{leminhaclases}] Let us mark some point $c \in S^1$. For any given $x \in S^1$ consider $F_x:S^1 \to S^1$ defined as follows: given $y \in S^1$ let $h_{x,y} \in Z_0(f)$ be determined by $h_{x,y}(x)=y$, and then let $F_x(y)$ be defined by $h_{x,y}\big(F_x(y)\big)=c$. It not difficult to prove that $F_x$ is a circle homeomorphism which identifies the class of $x$ with the one of $c$. In particular, given $x,y \in S^1$, the homeomorphism $F_y^{-1} \circ F_x$ identifies the class of $x$ with the class of $y$.
\end{proof}

Note that if $f$ is either a diffeomorphism or a ($C^3$) multicritical circle map, then points in the same $f$-orbit are $f$-equivalent. More generally, for such $f$'s, if $x\sim_f y$ then for each $x'\in \mathcal{O}_f(x)$ and each $y'\in \mathcal{O}_f(y)$ we have $x'\sim_f y'$. This happens because, in the cases considered, $f$ itself (hence $f^n$ for all $n\in \mathbb{Z}$) is a quasisymmetric homeomorphism. Note that, being $f$-invariant, all equivalence classes are dense in the unit circle.

In the language introduced before, if $X_f$ reduces to a single point, then $f$ is quasisymetrically orbit-rigid, whereas if $X_f$ has more than one point, then $f$ is quasisymetrically orbit-flexible. Now we can state the following simple consequence of Theorem \ref{ThmA}. 

\begin{prop}\label{propuncount} If $f:S^1\to S^1$ is a unicritical circle map whose rotation number belongs to the set $\bm{R}_A$ of Theorem \ref{ThmA}, then all its equivalence classes are meagre (in the sense of Baire). In particular $X_f$ is uncountable. 
\end{prop}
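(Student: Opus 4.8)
The plan is to combine Theorem \ref{ThmA} with the homogeneity statement from Lemma \ref{leminhaclases} and a Baire category argument. First, recall the setup: fixing a unicritical circle map $f$ with $\rho(f)\in\bm{R}_A$ and applying Theorem \ref{ThmA} with $g=f$, we obtain the decomposition $S^1=\mathcal{A}\sqcup\mathcal{B}$ where $\mathcal{A}$ consists of the points $x$ for which the self-conjugacy $h_x$ (normalized by $h_x(x)=c_f$) is quasisymmetric. The key observation is that $\mathcal{A}$ is exactly the $f$-equivalence class of the critical point $c_f$: indeed $x\sim_f c_f$ means there is a quasisymmetric $h\in Z_0(f)$ with $h(x)=c_f$, and since such an $h$ must coincide with $h_x$ (the self-conjugacy is uniquely determined by the image of one point, because $f$ is minimal), we get $x\in\mathcal{A}$; conversely any $x\in\mathcal{A}$ has $h_x$ quasisymmetric with $h_x(x)=c_f$, so $x\sim_f c_f$. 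Therefore $\mathcal{A}=[c_f]_{\sim_f}$.

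Second, by Theorem \ref{ThmA}, $\mathcal{B}=S^1\setminus\mathcal{A}$ contains a residual set, hence $\mathcal{A}=[c_f]_{\sim_f}$ is meagre in $S^1$. Now invoke Lemma \ref{leminhaclases}: every equivalence class is homeomorphic to the class of any fixed marked point, and in fact the proof of that lemma produces, for any $x,y\in S^1$, a \emph{homeomorphism} $F_y^{-1}\circ F_x$ of $S^1$ carrying $[x]_{\sim_f}$ onto $[y]_{\sim_f}$. Since a homeomorphism of $S^1$ preserves the Baire category of subsets, and since at least one class (namely $[c_f]_{\sim_f}=\mathcal{A}$) is meagre, it follows that \emph{every} equivalence class is meagre.

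Finally, $S^1$ is a complete metric space, hence a Baire space, so it cannot be written as a countable union of meagre sets. If $X_f=S^1/\sim_f$ were countable, then $S^1$ would be a countable union of its equivalence classes, each meagre — a contradiction. Therefore $X_f$ is uncountable, which is the assertion of the proposition. The only mildly delicate point is the identification $\mathcal{A}=[c_f]_{\sim_f}$, which rests on the uniqueness of the self-conjugacy sending a given point to $c_f$; this uniqueness is immediate from minimality of $f$ (any two conjugacies agreeing at one point agree on the dense orbit of that point, hence everywhere by continuity). Everything else is a routine application of the Baire category theorem together with the homeomorphism-invariance of meagreness.
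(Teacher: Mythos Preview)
Your proof is correct and follows essentially the same route as the paper's own argument: identify $\mathcal{A}$ (from Theorem~\ref{ThmA} with $g=f$) with the equivalence class $[c_f]_{\sim_f}$, deduce that this class is meagre since its complement $\mathcal{B}$ is residual, transfer meagreness to every other class via the ambient circle homeomorphisms produced in Lemma~\ref{leminhaclases}, and conclude by Baire's theorem. The only difference is that you spell out in more detail the uniqueness argument behind the identification $\mathcal{A}=[c_f]_{\sim_f}$ and the fact that meagreness is preserved under ambient homeomorphisms, both of which the paper leaves implicit.
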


\begin{proof}[Proof of Proposition \ref{propuncount}] By definition, the set $\mathcal{A}$ given by Theorem \ref{ThmA} (applied to the particular case $g=f$) is the equivalence class of $c_f$, the critical point of $f$. Being disjoint from the residual set $\mathcal{B}$, the set $\mathcal{A}$ is meagre. By Lemma \ref{leminhaclases}, all classes are meagre, and therefore, by Baire's theorem, their number is uncountable.
\end{proof}

By contrast, if $f: S^1\to S^1$ is a {\it smooth diffeomorphism\/} whose rotation number is Diophantine, then by a well-known theorem due to Herman and Yoccoz \cite{hermanIHES,yoccoz3}, $f$ is $C^1$ conjugate (in fact smoothly conjugate) to a rotation, and this immediately implies that $X_f$ is a single point. As mentioned before, the same happens with any irrational rotation or with any multicritical circle map with rotation number of bounded type. Indeed, as it follows from a result of Herman \cite{H}, any multicritical circle map $f$ with irrational rotation number $\rho$ of bounded type is quasisymmetrically conjugate to the rotation of angle $\rho$ (denoted by $R_{\rho}$): there exists a quasisymmetric circle homeomorphism $h$ such that $h \circ f=R_{\rho} \circ h$. Now mark some point $x \in S^1$ and for any given $y \in S^1$ consider the angle $\theta_y$ between $h(x)$ and $h(y)$, that is: $R_{\theta_y}\big(h(x)\big)=h(y)$. Then the homeomorphism $h_{x,y}=h^{-1} \circ R_{\theta_y} \circ h$ is quasisymmetric, commutes with $f$ (because $R_{\theta_y}$ commutes with $R_{\rho}$) and identifies $x$ with $y$. In other words, $x \sim_f y$ and then $X_f$ is a single point.

\subsection{Unbounded geometry}\label{secboundedgeo} Let $f$ be a $C^3$ multicritical circle map with irrational rotation number. We say that $f$ has \emph{bounded geometry} at $x \in S^1$ if there exists $K>1$ such that for all $n\in\nt$ and for every pair $I,J$ of adjacent atoms of $\mathcal{P}_n(x)$ we have$$K^{-1}\,|I| \leq |J| \leq K\,|I|\,,$$where $\big\{\mathcal{P}_n(x)\big\}_{n\in\nt}$\, is the standard sequence of dynamical partitions of the circle associated to $x \in S^1$ (see Section \ref{subseccombpart}). With this at hand, consider the set$$\mathcal{A}=\mathcal{A}(f)=\{x \in S^1:\,\mbox{$f$ has bounded geometry at $x$}\}\,.$$The relation between bounded geometry and quasisymmetric homeomorphisms is given by the following result.

\begin{prop}\label{propappeq} Let $f$ be a multicritical circle map with irrational rotation number, and let $x\in\mathcal{A}(f)$. As before, for any given $y \in S^1$ let $h_{x,y} \in Z_0(f)$ be determined by $h_{x,y}(x)=y$. Then$$y\in\mathcal{A}(f) \Leftrightarrow h_{x,y} \in \mathrm{QS}(S^1)\,.$$
\end{prop}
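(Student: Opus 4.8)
The plan is to prove both implications by comparing the dynamical partitions $\mathcal{P}_n(x)$ and $\mathcal{P}_n(y)$ directly, using the fact that the conjugacy $h_{x,y}$ maps $\mathcal{P}_n(x)$ onto $\mathcal{P}_n(y)$ atom-by-atom. Indeed, since $h_{x,y}\circ f = f\circ h_{x,y}$ and $h_{x,y}(x)=y$, the homeomorphism $h_{x,y}$ carries each atom $f^j(I_n(x))$ of the $n$-th dynamical partition based at $x$ to the corresponding atom $f^j(I_n(y))$ of the $n$-th dynamical partition based at $y$, where $I_n(x)$ denotes the $n$-th renormalization interval $[x,f^{q_n}(x)]$. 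So the combinatorial structure of the two partitions is identical, and the only question is how the \emph{sizes} of corresponding atoms compare.

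First I would recall the standard characterization of quasisymmetry in terms of dynamical partitions: a homeomorphism $H$ commuting with $f$ and mapping $\mathcal{P}_n(x)$ to $\mathcal{P}_n(y)$ is quasisymmetric \emph{if and only if} there is a constant $C>1$ such that for every $n$ and every pair of adjacent atoms $I,J$ of $\mathcal{P}_n(x)$,
\[
C^{-1}\,\frac{|H(J)|}{|H(I)|}\ \le\ \frac{|J|}{|I|}\ \le\ C\,\frac{|H(J)|}{|H(I)|}\,;
\]
this is essentially the Denjoy--Koebe / cross-ratio machinery underlying quasisymmetric rigidity and is one of the tools available from the earlier sections (e.g.\ the a priori bounds and the Yoccoz inequality). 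Granting this, the statement reduces to an entirely combinatorial/metric comparison. For the direction $y\in\mathcal{A}(f)\Rightarrow h_{x,y}\in\mathrm{QS}(S^1)$: if both $f$ has bounded geometry at $x$ (with constant $K_x$) and at $y$ (with constant $K_y$), then for adjacent atoms $I,J$ of $\mathcal{P}_n(x)$ we have $|J|/|I|\in[K_x^{-1},K_x]$ while $|h_{x,y}(J)|/|h_{x,y}(I)|\in[K_y^{-1},K_y]$, since $h_{x,y}(I),h_{x,y}(J)$ are adjacent atoms of $\mathcal{P}_n(y)$; hence the displayed two-sided bound holds with $C=K_xK_y$, and $h_{x,y}$ is quasisymmetric. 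For the converse $h_{x,y}\in\mathrm{QS}(S^1)\Rightarrow y\in\mathcal{A}(f)$: assuming $h_{x,y}$ is $M$-quasisymmetric, the displayed estimate holds with $C=C(M)$ (quasisymmetric maps have bounded distortion of ratios of adjacent intervals); combined with $|J|/|I|\in[K_x^{-1},K_x]$ coming from $x\in\mathcal{A}(f)$, we get $|h_{x,y}(J)|/|h_{x,y}(I)|\in[(CK_x)^{-1},CK_x]$ for all adjacent atoms at level $n$, which is exactly bounded geometry at $y$. So $y\in\mathcal{A}(f)$.

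I would write the argument symmetrically so that the hypothesis $x\in\mathcal{A}(f)$ is used only once, in the converse direction, and note that in the forward direction the roles of $x$ and $y$ are interchangeable. One point deserving care is the passage between \emph{adjacent atoms of the same level} $\mathcal{P}_n$ and the full quasisymmetry inequality $\frac{1}{M}\le\frac{h(x+t)-h(x)}{h(x)-h(x-t)}\le M$ for \emph{arbitrary} $t>0$: this is the step that genuinely needs the a priori bounds (real bounds), which give comparability between an arbitrary interval and the atom of $\mathcal{P}_n$ of comparable size containing it, with uniformly bounded ratios, together with the bounded-geometry hypothesis to control the finitely many intermediate scales. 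Concretely, given $t$, one chooses $n$ so that the interval $[x-t,x+t]$ is comparable to a bounded union of atoms of $\mathcal{P}_n(x)$, estimates the images using the level-$n$ comparison above, and uses real bounds once more to pass back to $h(x\pm t)$.

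The main obstacle is precisely this last reduction — turning the level-by-level control on adjacent atoms into the genuine quasisymmetry bound — because it requires invoking the a priori (real) bounds for multicritical circle maps and keeping track of the chain of comparability constants across the (uniformly bounded number of) scales between consecutive dynamical partitions. Everything else is a direct consequence of the partition-commutation identity $h_{x,y}(\mathcal{P}_n(x))=\mathcal{P}_n(y)$ and the definition of bounded geometry. I expect the write-up to lean on whatever packaging of ``quasisymmetry $\Leftrightarrow$ uniformly bounded ratio-distortion on dynamical partitions'' has already been established earlier in the paper (it is the same mechanism as in Lemma~\ref{crossratiodistortion}); if such a statement is available, the proof of Proposition~\ref{propappeq} becomes essentially the two short comparisons displayed above.
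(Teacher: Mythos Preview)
Your proposal is correct and aligns with the paper's approach. For the implication $h_{x,y}\in\mathrm{QS}(S^1)\Rightarrow y\in\mathcal{A}(f)$, the paper argues by contradiction exactly as you do: if $y\notin\mathcal{A}(f)$, there are adjacent atoms $I_k,J_k$ of $\mathcal{P}_{n_k}(y)$ with $|I_k|/|J_k|\to\infty$, while their $h_{x,y}$-preimages are adjacent atoms of $\mathcal{P}_{n_k}(x)$ with bounded ratio since $x\in\mathcal{A}(f)$, and this is incompatible with $h_{x,y}$ being quasisymmetric. (The paper phrases this last step via the bi-H\"older consequence of quasisymmetry; your direct appeal to the fact that a quasisymmetric map keeps ratios of adjacent intervals of comparable length bounded is the same idea and is arguably the cleaner formulation.) For the implication $y\in\mathcal{A}(f)\Rightarrow h_{x,y}\in\mathrm{QS}(S^1)$ --- which you correctly flag as the substantive direction, requiring one to upgrade level-by-level control on adjacent partition atoms to the full quasisymmetry inequality for arbitrary $t>0$ via the real bounds --- the paper gives no argument and simply cites \cite[Sections~5.1 and~5.2]{EdF}, where precisely this reduction is carried out. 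So your sketch is faithful to what is actually done; you have just spelled out what the paper leaves to a reference.
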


\begin{proof}[Proof of Proposition \ref{propappeq}] For the ``if" implication suppose, by contradiction, that $y\notin\mathcal{A}$. This means that there exists a sequence $\{n_k\}_{k\in\nt}\subset\nt$ such that for each $k\in\nt$ we can find a pair $I_k,J_k$ of adjacent atoms of $\mathcal{P}_{n_k}(y)$ satisfying $\lim_{k}|I_k|/|J_k|=+\infty$\,. However, both intervals $h_{x,y}^{-1}(I_k)$ and $h_{x,y}^{-1}(J_k)$ are adjacent and belong to $\mathcal{P}_{n_k}(x)$, and since $x\in\mathcal{A}$, the ratios $\big|h_{x,y}^{-1}(I_k)\big|/\big|h_{x,y}^{-1}(J_k)\big|$ are bounded. But this is impossible since, being a quasisymmetric homeomorphism, $h_{x,y}$ is bi-H\"older (recall Figure \ref{diagrama}). For the ``only if" implication we refer the reader to \cite[Sections 5.1 and 5.2]{EdF}.
\end{proof}

An immediate consequence of Proposition \ref{propappeq} is that the set $\mathcal{A}$ is $f$-invariant, since $f$ itself (hence $f^n$ for all $n\in \mathbb{Z}$) is a quasisymmetric homeomorphism. As it follows from the classical real bounds of Herman and \'Swi\c{a}tek (see Theorem \ref{realbounds} for its precise statement), all critical points of $f$ belong to $\mathcal{A}$. Being $f$-invariant and non-empty, the set $\mathcal{A}$ is dense in the unit circle. However, the following consequence of Theorem \ref{ThmA} shows that $\mathcal{A}$ can be rather small.

\begin{maintheorem}\label{ThmD} Let $\bm{R}_A\subset(0,1)$ be the full Lebesgue measure set given by Theorem \ref{ThmA}, and let $f$ be a $C^3$ critical circle map with a single (non-flat) critical point and rotation number $\rho\in\bm{R}_A$. Then the set $\mathcal{A}(f)$ is meagre (in the sense of Baire) and it has zero $\mu_f$-measure.
\end{maintheorem}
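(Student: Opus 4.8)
The plan is to derive Theorem \ref{ThmD} as a direct corollary of Theorem \ref{ThmA} together with Proposition \ref{propappeq}, using the observation that quasisymmetric self-conjugacies of $f$ are exactly the homeomorphisms $h_{x}$ appearing in Theorem \ref{ThmA} when one specializes to $g=f$. Concretely, apply Theorem \ref{ThmA} with $g=f$: the set $\mathcal{B}$ of points $x$ for which the self-conjugacy $h_x$ (determined by $h_x(x)=c_f$) fails to be quasisymmetric contains a residual set and has full $\mu_f$-measure, while its complement $\mathcal{A}^{\mathrm{QS}}:=\{x:\,h_x\in\mathrm{QS}(S^1)\}$ is dense. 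The task is then simply to identify the set $\mathcal{A}(f)$ of points of bounded geometry with this set $\mathcal{A}^{\mathrm{QS}}$, after which the conclusion is immediate.

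First I would recall that, by the real bounds of Herman and \'Swi\c{a}tek (Theorem \ref{realbounds}), the critical point $c_f$ has bounded geometry, i.e.\ $c_f\in\mathcal{A}(f)$. Next, fix $x=c_f$ as the base point in Proposition \ref{propappeq}: since $c_f\in\mathcal{A}(f)$, for every $y\in S^1$ we have the equivalence $y\in\mathcal{A}(f)\iff h_{c_f,y}\in\mathrm{QS}(S^1)$, where $h_{c_f,y}\in Z_0(f)$ is the self-conjugacy with $h_{c_f,y}(c_f)=y$. Now observe that $h_{c_f,y}=h_y^{-1}$, where $h_y$ is the conjugacy from Theorem \ref{ThmA} (with $g=f$) determined by $h_y(y)=c_f$: indeed both are elements of $Z_0(f)$ and $h_y^{-1}(c_f)=y=h_{c_f,y}(c_f)$, and a self-conjugacy of a minimal homeomorphism is determined by the image of a single point. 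Since $\mathrm{QS}(S^1)$ is a group, $h_{c_f,y}\in\mathrm{QS}(S^1)\iff h_y\in\mathrm{QS}(S^1)\iff y\in\mathcal{A}$ (the set $\mathcal{A}$ of Theorem \ref{ThmA}). Combining, $\mathcal{A}(f)=\mathcal{A}$.

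Finally, Theorem \ref{ThmA} asserts that $\mathcal{B}=S^1\setminus\mathcal{A}$ contains a residual set and has full $\mu_f$-measure; equivalently, $\mathcal{A}=\mathcal{A}(f)$ is meagre and has zero $\mu_f$-measure, which is precisely the assertion of Theorem \ref{ThmD}. The only point requiring a little care is the identification $h_{c_f,y}=h_y^{-1}$ and the unambiguous specification of these maps; this rests on the fact that for a minimal circle homeomorphism the centralizer acts freely (a nontrivial commuting homeomorphism has no fixed point), so a commuting homeomorphism is uniquely determined by the image of one point. I do not anticipate a genuine obstacle here: all the hard work is already packaged into Theorem \ref{ThmA} (whose proof is the substantive content of Section \ref{StheproofofA}) and into Proposition \ref{propappeq}; Theorem \ref{ThmD} is essentially a reformulation of Theorem \ref{ThmA} in the language of bounded geometry.
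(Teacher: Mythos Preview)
Your proposal is correct and follows essentially the same route as the paper: identify $\mathcal{A}(f)$ with the set $\mathcal{A}$ of Theorem~\ref{ThmA} (in the case $g=f$) via Proposition~\ref{propappeq} and the real bounds, then read off the conclusion. The paper phrases the identification through the equivalence-class language (noting that $\mathcal{A}(f)$ is the $\sim_f$-class of $c_f$ and invoking Proposition~\ref{propuncount}), whereas you spell out directly that $h_{c_f,y}=h_y^{-1}$; this is a cosmetic difference only.
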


To prove Theorem \ref{ThmD} note first that Proposition \ref{propappeq} is saying that the set $\mathcal{A}$ is an equivalence class for the $\sim_f$\, relation and then, by Proposition \ref{propuncount}, we already know that it is meagre. Moreover, since the critical point of $f$ belongs to $\mathcal{A}$ (again, see Theorem \ref{realbounds}), we deduce that $\mathcal{A}$ is precisely the equivalence class of the critical point. With this at hand, Theorem \ref{ThmD} follows at once from Theorem \ref{ThmA} just by considering the particular case $g=f$.

By contrast, recall that if $f$ has bounded combinatorics, then the set $\mathcal{A}(f)$ is the whole circle (as already discussed at the end of Section \ref{subsecflex})\,: $f$ has bounded geometry at any point.

\subsection{Brief summary} Here is how the paper is organized. In \S \ref{sechomeos} we recall some basic facts concerning circle homeomorphisms without periodic points, as well as some standard tools for the study of multicritical circle maps -- the most
important for us being cross-ratio distortion and Yoccoz's Lemma. We also introduce the concepts of {\it renormalization ancestors\/}
and {\it renormalization trails\/}, which are specific to the present paper, and we state Theorem \ref{propresiduais}, a key
result for our purposes. In \S \ref{Secskew} we introduce a certain skew product whose ergodicity and topological exactness will
be crucial in proving Theorem \ref{propresiduais}. The proof of Theorem \ref{propresiduais} is given in \S \ref{provaproptrails}.
In \S \ref{sec:eventype}, we examine the class $\mathbb{E}_\infty$ of rotation numbers that appears in the statement of Theorem \ref{ThmB} above. The proofs of Theorems \ref{ThmA} and \ref{ThmB}  will be given in \S \ref{StheproofofA}. The auxiliary concept of {\it admissible pairs\/} for bi-critical circle maps is introduced in \S \ref{S:real}, where we prove the $C^{\infty}$ Realization Lemma stated above (and recall that, when combined with Theorem \ref{ThmB}, the $C^{\infty}$ Realization Lemma implies Theorem \ref{ThmC}). The paper closes with Appendix \ref{apperg}, dedicated to the proof of the ergodicity of the skew product introduced in \S \ref{Secskew}, and  Appendix \ref{appren}, which contains some informal remarks on the connection of said skew-product with renormalization theory. 

\section{Minimal circle homeomorphisms}\label{sechomeos}

\subsection{Combinatorics and partitions}\label{subseccombpart} Let $f:S^1 \to S^1$ be an orientation preserving circle
homeomorphism with irrational rotation number $\rho$. As it is well know, $\rho$ has an infinite \emph{continued fraction expansion}, say
\begin{equation*}
      \rho(f)= [a_{0} , a_{1} , \cdots ]=
      \cfrac{1}{a_{0}+\cfrac{1}{a_{1}+\cfrac{1}{ \ddots} }} \ .
    \end{equation*}

A classical reference for continued fraction expansion is the monograph \cite{khin}. Truncating the expansion at level $n-1$,
we obtain a sequence of fractions $p_n/q_n$, which are called the \emph{convergents} of the irrational $\rho$.
$$
\frac{p_n}{q_n}\;=\;[a_0,a_1, \cdots ,a_{n-1}]\;=\;\dfrac{1}{a_0+\dfrac{1}{a_1+\dfrac{1}{\ddots\dfrac{1}{a_{n-1}}}}}\ .
$$

The sequence of denominators $q_n$, which we call the \emph{return times}, satisfies
\begin{equation*}
 q_{0}=1, \hspace{0.4cm} q_{1}=a_{0}, \hspace{0.4cm} q_{n+1}=a_{n}\,q_{n}+q_{n-1} \hspace{0.3cm} \text{for $n \geq 1$} .
\end{equation*}

Since $\rho$ is irrational, $f$ admits a unique invariant Borel probability measure $\mu$. Assuming that $f$ has no wandering
intervals, we deduce that there exists a circle homeomorphism $h:S^1 \to S^1$ which is a topological conjugacy between $f$ and
the rigid rotation by angle $\rho(f)$, that we denote by $R_{\rho(f)}$. More precisely, the following diagram commutes:
$$
\begin{CD}
(S^1,\mu)@>{f}>>(S^1,\mu)\\
@V{h}VV             @VV{h}V\\
{(S^1,\lambda)}@>>{R_{\rho}}>{(S^1,\lambda)}
\end{CD}
$$
where $\lambda$ denotes the normalized Lebesgue measure in the unit circle (the Haar measure for the multiplicative group of
complex numbers of modulus $1$). Therefore $\mu$ is just the push-forward of the Lebesgue measure under $h^{-1}$, that is,
$\mu(A)=\lambda\big(h(A)\big)$ for any Borel set $A$ in the unit circle (recall that the conjugacy $h$ is unique up to
post-composition with rotations, so the measure $\mu$ is well-defined). In particular, $\mu$ has no atoms and gives
positive measure to any open set (for more information on the measure $\mu$ see \cite{dFG2016, dFG20, tru} and references therein).

We consider now intervals of the form $(\,\cdot\,,\,\cdot\,]$\,, that is, \emph{open on its left} and \emph{closed on its right}.
For each non-negative integer $n$, let $I_{n}$ be the interval with endpoints $x$ and $f^{q_n}(x)$ containing  $f^{q_{n+2}}(x)$, namely,
$I_n=\big(x,f^{q_n}(x)\big]$ and $I_{n+1}=\big(f^{q_{n+1}}(x),x\big]$.

As it is well known, for each $n\geq 0$, the collection of intervals$$\mathcal{P}_n(x)\;=\; \big\{f^i(I_n):\;0\leq i\leq q_{n+1}-1\big\}\cup\big\{f^j(I_{n+1}):\;0\leq j\leq q_{n}-1\big\}$$is a partition of the unit circle (see for instance the appendix in \cite{EdFG}), called the {\it $n$-th dynamical partition\/} associated to $x$. The intervals of the form $f^i(I_n)$ are called \emph{long}, whereas those of the form $f^j(I_{n+1})$ are called \emph{short}. The initial partition $\mathcal{P}_0(x)$ is given by$$\mathcal{P}_0(x)=\left\{\big(f^{i}(x),f^{i+1}(x)\big]:\,i\in\{0,...,a_0-1\}\right\}\cup\big\{\big(f^{a_{0}}(x),x\big]\big\}.$$

Let us now give the formal definition of a \emph{multicritical circle map}, the main object of study in the present paper.

\begin{definition}\label{defmccm} A critical point $c$ of a one-dimensional $C^3$ map $f$ is said to be \emph{non-flat} of criticality $d>1$ if there exists a neighbourhood $W$ of $c$ such that $f(x)=f(c)+\phi(x)\big|\phi(x)\big|^{d-1}$ for all $x \in W$, where $\phi : W \rightarrow \phi(W)$ is a $C^{3}$ diffeomorphism satisfying $\phi(c)=0$. A \emph{multicritical circle map} is an orientation preserving $C^3$ circle homeomorphism having $N \geq 1$ critical points, all of which are non-flat.
\end{definition}

Throughout this paper we make no further assumption on the criticality of any critical point. The following fundamental result was obtained by Herman and \'Swi\c{a}tek in the eighties \cite{H,swiatek}.

\begin{theorem}[The real bounds]\label{realbounds} Given $N\geq 1$ in $\nt$ and $d>1$ there exists a universal constant $C=C(N,d)>1$ with the following property: for any given multicritical circle map $f$ with irrational rotation number, and with at most $N$ critical points whose criticalities are bounded by $d$, there exists $n_0=n_0(f)\in\nt$ such that for each critical point $c$ of $f$, for all $n \geq n_0$, and for every pair $I,J$ of adjacent atoms of $\mathcal{P}_n(c)$ we have:$$C^{-1}\,|I| \leq |J| \leq C\,|I|\,,$$where $|I|$ denotes the Euclidean length of an interval $I$.
\end{theorem}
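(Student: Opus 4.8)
The plan is to follow the classical arguments of Herman and \'Swi\c{a}tek (and their subsequent streamlinings, e.g.\ in de Melo--van Strien), whose engine is the control of \emph{cross-ratio distortion} for iterates of $f$ along orbit segments that cover the circle with bounded multiplicity. Fix $f$ as in the statement; since $f$ has no wandering intervals (Yoccoz's theorem) it is minimal, so for every $x$ the dynamical partitions $\mathcal{P}_{n}(x)$ are genuine partitions whose mesh tends to $0$. Fix a critical point $c$ and abbreviate $I_{n}=I_{n}(c)$. The two tools I would set up first are: \textbf{(T1)} a Koebe/cross-ratio distortion estimate --- this is exactly where the hypothesis $f\in C^{3}$ and the \emph{Schwarzian derivative} enter, through Yoccoz's Lemma (Section \ref{sectools}) --- to the effect that if $f^{k}|_{T}$ is a diffeomorphism onto its image and the intervals $T,f(T),\dots,f^{k-1}(T)$ cover each point of $S^{1}$ at most $m$ times, then the cross-ratio distortion of $f^{k}$ on $T$ is bounded by a function of $m$ alone; and \textbf{(T2)} the local normal form at a non-flat critical point, namely that in the coordinate $\phi$ with $f(x)=f(c)+\phi(x)|\phi(x)|^{d-1}$ the ratio of the lengths of two adjacent intervals abutting $c$ is changed by a controlled power, so that one passage of an orbit segment through a critical point of criticality $\le d$ distorts length-comparability by a factor bounded in terms of $d$ only.

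\textbf{Step 1 (reduction to the critical scales).} I would first reduce the theorem to producing a \emph{universal} constant $C_{0}=C_{0}(N,d)$ controlling, for every critical point $c$ and every large $n$, the geometry of the first-return map of $f$ to a scale-$n$ neighbourhood of $c$; concretely, bounding from above and below both $|I_{n}(c)|/|I_{n+1}(c)|$ and $|f^{q_{n}}(I_{n}(c))|/|I_{n}(c)|$. Indeed, for the single point $c$ every atom of $\mathcal{P}_{n}(c)$ has the form $f^{k}(I_{n}(c))$ or $f^{k}(I_{n+1}(c))$, and the orbit segments realising these $f^{k}$ run through pairwise essentially disjoint atoms (multiplicity $\le 2$) and meet the critical set of $f$ at most $N$ times; feeding (T1) through the at most $N+1$ diffeomorphic blocks and (T2) through the at most $N$ critical passages then transports comparability of those generating intervals into comparability of \emph{any} two adjacent level-$n$ atoms, with a final constant depending on $N$ and $d$ only. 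Uniformity over the critical points is automatic, since all of them share the return times $q_{n}$, and this propagation step is itself classical.

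\textbf{Step 2 (the core comparability $|I_{n}(c)|\asymp|I_{n+1}(c)|$).} This is the substance, and I would split it according to the size of the partial quotient $a_{n}$. If the $a_{n}$ under consideration is bounded, then $I_{n-1}(c)$ is --- up to the endpoint alternation inherent in the continued fraction expansion --- the disjoint union of $I_{n+1}(c)$ together with $a_{n}$ consecutive forward images of $I_{n}(c)$; since the relevant high iterate of $f$ has bounded cross-ratio distortion on a neighbourhood of $c$ (multiplicity $\le 2$, at most $N$ critical passages, by (T1)--(T2)), a Koebe-space argument forces each of $I_{n}(c)$ and $I_{n+1}(c)$ to occupy a definite fraction of $I_{n-1}(c)$, hence to be comparable. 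If instead $a_{n}$ is large, one is in the \emph{almost-parabolic} regime: the relevant branch of the first-return map (a high iterate $f^{q_{n}}$, resp.\ $f^{q_{n-1}}$) must be iterated roughly $a_{n}$ times, and here I would invoke the scale-free quantitative estimates for near-parabolic maps, in which the non-flat behaviour of $f$ at $c$ --- the $|\phi|^{d-1}$ factor --- is precisely what compensates the near-parabolic contraction and keeps $|I_{n}(c)|$ and $|I_{n+1}(c)|$ comparable with a bound independent of $a_{n}$. Combining the two regimes produces $C_{0}=C_{0}(N,d)$; the restriction $n\ge n_{0}(f)$ is exactly what guarantees that the scale-$(n_{0}-1)$ neighbourhood of $c$ already sits well inside a fundamental domain of $f$, i.e.\ that the Koebe space needed to start the argument is present --- the only non-universal ingredient.

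\textbf{Main obstacle.} The hard part is the almost-parabolic estimate in Step 2: one must quantify the interplay between the near-parabolic iteration --- which \emph{by itself} would make $|I_{n+1}(c)|$ negligible compared to $|I_{n}(c)|$ when $a_{n}$ is large, exactly as happens for circle diffeomorphisms with Liouville rotation number --- and the expansion carried by the critical point, obtaining bounds that are uniform in $a_{n}$, in $n$, and (with an extra layer of bookkeeping when $N>1$, since an orbit segment may now traverse several distinct critical points before closing up) in the number and criticalities of the critical points. This is precisely the place where negative-Schwarzian / cross-ratio-distortion technology, hence the hypothesis $f\in C^{3}$, is indispensable, and where the classical near-parabolic estimates must be rendered both scale-free and combinatorics-free.
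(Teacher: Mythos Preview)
The paper does not prove Theorem \ref{realbounds}; it is quoted as a classical result of Herman and \'Swi\c{a}tek and the reader is referred to \cite{EdF,EdFG} for detailed proofs. So there is no in-paper argument to compare your proposal against.

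That said, your outline is broadly the classical one, but there is a confusion of tools worth flagging. You identify (T1) with ``Yoccoz's Lemma (Section \ref{sectools})'', and in Step~2 you want to invoke near-parabolic estimates when $a_n$ is large. In this paper (and in the cited references) Yoccoz's Lemma is Lemma~\ref{yoccozlemma}, the almost-parabolic estimate; it is used \emph{downstream} of the real bounds, not as an input to them. Indeed, to apply Lemma~\ref{yoccozlemma} one needs a uniform lower bound on the width $\sigma$, which is itself a real-bounds-type statement, and one also needs Lemma~\ref{negschwarz} (negative Schwarzian for high iterates), whose proof in \cite{EdFG} already relies on the real bounds. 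So your Step~2 as written risks circularity. The actual engine in \cite{H,swiatek,EdF,EdFG} is the \emph{Cross-Ratio Inequality} (\'Swi\c{a}tek's lemma): the product of cross-ratio distortions of $f$ along an orbit segment of bounded multiplicity is uniformly bounded, with the critical passages handled by the non-flatness hypothesis exactly as in your (T2). This single estimate delivers the comparability $|I_n(c)|\asymp|I_{n+1}(c)|$ directly and uniformly in $a_n$, with no need to split into bounded/large $a_n$ regimes. Replacing your (T1) by the Cross-Ratio Inequality and dropping the case split in Step~2 brings your sketch in line with the proofs in the cited references.
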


In the language introduced in Section \ref{secboundedgeo}, Theorem \ref{realbounds} is saying that a multicritical circle map has bounded geometry at any of its critical points. A detailed proof of Theorem \ref{realbounds} can also be found in \cite{EdF,EdFG}.

\subsection{The Gauss map}\label{gaussmap}

For any real number $x$ denote by $\lfloor x\rfloor$ the \emph{integer part} of $x$, that is,
the greatest integer less than or equal to $x$. Also, denote by $\{x\}$ the \emph{fractional part} of $x$, that is, $\{x\}=x-\lfloor x\rfloor\in[0,1).$

Recall that the \emph{Gauss map} $G:[0,1]\to[0,1]$ is given by
\[
G(\rho)=\left\{\frac{1}{\rho}\right\}\mbox{ for $\rho\neq 0$\,, and $G(0)=0$.}
\]
Both $\Q\cap[0,1]$ and $[0,1]\setminus\Q$ are $G$-invariant. Under the action of $G$, all rational numbers in $[0,1]$ eventually
land on the fixed point at the origin, while the irrationals remain in the union $\bigcup_{k \geq 1}\left(\frac{1}{k+1},\frac{1}{k}\right)$. Moreover, for any $\rho\in(0,1)\setminus\Q$ and any $j\in\nt$ we have that $G^j(\rho)\in\left(\frac{1}{k+1},\frac{1}{k}\right)$ if, and only if, $a_j=k$, where $a_j$ denotes the partial quotients of $\rho$ (just as in Section \ref{subseccombpart} above). Indeed, if $\rho= [a_{0} , a_{1} , a_{2} , \cdots ]$ belongs to $\big(1/(k+1),1/k\big)$, then $1/\rho=a_0+[a_{1} , a_{2} , \cdots ]$ and then $a_0=\left\lfloor\frac{1}{\rho}\right\rfloor=k$ and $G(\rho)=[a_{1} , a_{2} , \cdots ]$. In particular, the Gauss map acts as a \emph{shift} on the continued fraction expansion of $\rho$.

\medskip

As it is well known, the map $G$ preserves an ergodic Borel probability measure $\nu$ (called the \emph{Gauss measure}) given by:$$\nu(A)=\frac{1}{\log 2}\int_{A}\frac{d\rho}{1+\rho}\quad\mbox{for any Borel set $A \subset [0,1]$.}$$

In particular, the Gauss measure $\nu$ is equivalent to the Lebesgue measure on $[0,1]$ ({\it i.e.,\/} they share the same null sets). In Section \ref{provaproptrails}, during the proof of Lemma \ref{leminhacomb}, we will make repeated use of the following well-known formula.

\begin{lemma}\label{formmuIn} Let $f:S^1 \to S^1$ be an orientation preserving circle
homeomorphism with irrational rotation number $\rho$, and with unique invariant measure $\mu$. For any $x \in S^1$ and any $n\in\nt$ we have:
\begin{equation}\label{formuInG}
\mu(I_n)=\prod_{j=0}^{j=n}G^j(\rho)=\rho\,G(\rho)\,G^2(\rho)\,\cdots\,G^n(\rho),
\end{equation}where $I_{n}$ is the interval with endpoints $x$ and $f^{q_n}(x)$ containing $f^{q_{n+2}}(x)$, as defined in Section \ref{subseccombpart}.
\end{lemma}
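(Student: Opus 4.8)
The plan is to reduce the statement to a classical arithmetic identity about continued fractions and then prove that identity by an elementary induction.

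First I would transport everything to the rigid rotation via the conjugacy $h$ (the commutative diagram above), using that $\mu(A)=\lambda\big(h(A)\big)$ for every Borel set $A$. Since $h\circ f=R_\rho\circ h$, the image $h(I_n)$ is the arc of $S^1$ with endpoints $h(x)$ and $h(x)+q_n\rho\pmod 1$ that contains $h(x)+q_{n+2}\rho$. Normalising so that $h(x)=0$, one of the two complementary arcs bounded by $0$ and $\{q_n\rho\}$ has length $|q_n\rho-p_n|$ and the other has length $1-|q_n\rho-p_n|$. I would then check, using that $q_n\rho-p_n$ and $q_{n+2}\rho-p_{n+2}$ have the same sign and that $|q_{n+2}\rho-p_{n+2}|<|q_n\rho-p_n|$, that the point $\{q_{n+2}\rho\}$ lies on the \emph{shorter} of these two arcs; hence $h(I_n)$ is that shorter arc and $\mu(I_n)=\lambda\big(h(I_n)\big)=|q_n\rho-p_n|$. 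So it remains to prove
\[
 |q_n\rho - p_n| \;=\; \prod_{j=0}^{n} G^j(\rho)\,.
\]

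To prove this identity, write $\rho_j=G^j(\rho)=[a_j,a_{j+1},\dots]$ and $\beta_n=|q_n\rho-p_n|$. From the description of the Gauss map in Section \ref{gaussmap} one has $\rho_j=1/(a_j+\rho_{j+1})$ for every $j\ge 0$, so that $\rho$ is obtained from $\rho_n$ by the finite Möbius composition $t\mapsto 1/(a_0+1/(a_1+\cdots+1/(a_{n-1}+t)))$. Representing this composition as a product of the matrices $\left(\begin{smallmatrix}0&1\\1&a_j\end{smallmatrix}\right)$ and using the recursions $q_{n+1}=a_nq_n+q_{n-1}$ and $p_{n+1}=a_np_n+p_{n-1}$ (with $p_{-1}=q_0=1$ and $p_0=q_{-1}=0$), a straightforward induction identifies this product with $\left(\begin{smallmatrix}p_{n-1}&p_n\\ q_{n-1}&q_n\end{smallmatrix}\right)$, whence
\[
 \rho \;=\; \frac{p_{n-1}\,\rho_n+p_n}{q_{n-1}\,\rho_n+q_n}\,.
\]
Clearing denominators gives $\rho_n\,(q_{n-1}\rho-p_{n-1})=-(q_n\rho-p_n)$; since the determinant of the above matrix is $(-1)^n$, the numbers $q_n\rho-p_n$ alternate in sign, so this last identity reads $\rho_n=\beta_n/\beta_{n-1}$. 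Together with $\beta_0=|q_0\rho-p_0|=\rho=\rho_0$, a trivial induction yields $\beta_n=\rho_0\rho_1\cdots\rho_n=\prod_{j=0}^n G^j(\rho)$, which is exactly \eqref{formuInG}.

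The only genuinely delicate points are bookkeeping ones: tracking signs (equivalently parities) carefully enough to be sure that $h(I_n)$ is the short arc — i.e.\ that $f^{q_{n+2}}(x)$ indeed lies between $x$ and $f^{q_n}(x)$ on that side — and matching the (slightly nonstandard) indexing $p_n/q_n=[a_0,\dots,a_{n-1}]$ used in Section \ref{subseccombpart} with the usual convergent recursions. Neither creates a real obstacle, and everything else is a direct computation.
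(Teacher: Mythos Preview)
Your argument is correct. The route you take, however, differs from the paper's. You first pass through the conjugacy $h$ to reduce the statement to the arithmetic identity $\mu(I_n)=|q_n\rho-p_n|$, and then establish $|q_n\rho-p_n|=\prod_{j=0}^{n}G^j(\rho)$ via the matrix representation of continued fractions and the resulting relation $\rho_n=\beta_n/\beta_{n-1}$. The paper instead argues entirely at the level of the invariant measure $\mu$, without ever invoking the conjugacy or the numbers $|q_n\rho-p_n|$: it uses the combinatorial nesting of the dynamical partitions to get the recursion $\mu(I_{n+2})=\mu(I_n)-a_{n+1}\,\mu(I_{n+1})$ (with $a_{n+1}=\lfloor \mu(I_n)/\mu(I_{n+1})\rfloor$), and checks by induction that this is exactly the Gauss-map recursion $\mu(I_{n+1})=G^{n+1}(\rho)\,\mu(I_n)$. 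Your approach has the advantage of identifying $\mu(I_n)$ with the classical quantity $|q_n\rho-p_n|$, which is useful elsewhere; the paper's approach is shorter and more self-contained, avoiding the sign/parity bookkeeping and the matrix machinery altogether.
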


Note, in particular, that\, $\mu(I_{n+1})=G^{n+1}(\rho)\,\mu(I_n)$\, for all $n\in\nt$.

\begin{proof}[Proof of Lemma \ref{formmuIn}] The proof goes by induction on $n\in\nt$. First note that, since $I_0=\big(x,f(x)\big]$ is a \emph{fundamental domain} for $f$, we have $\mu(I_0)=\rho$. Now let $a_0\in\nt$ be defined by:$$a_0\,\mu(I_0) \leq \mu(S^1) < (a_0+1)\,\mu(I_0)\,.$$
In other words, $a_0\,\rho \leq 1 < (a_0+1)\,\rho$.\, This implies $a_0 \leq \frac{1}{\rho} < a_0+1$, and then $a_0=\left\lfloor\frac{1}{\rho}\right\rfloor$. In particular, $\mu(I_1)=\mu(S^1)-a_0\,\mu(I_0)=1-a_0\,\rho=(\frac{1}{\rho}-a_0)\,\rho=\left\{\frac{1}{\rho}\right\}\rho=\rho\,G(\rho)$. This shows that \eqref{formuInG} holds for $n=0$ and $n=1$. Now fix some $n\in\nt$ and let $a_{n+1}\in\nt$ be defined by:$$a_{n+1}\,\mu(I_{n+1}) \leq \mu(I_{n}) < (a_{n+1}+1)\,\mu(I_{n+1})\,.$$
In other words, $\displaystyle a_{n+1}=\left\lfloor\frac{\mu(I_{n})}{\mu(I_{n+1})}\right\rfloor$. Assuming that \eqref{formuInG} holds for $n$ and $n+1$, we obtain $\displaystyle a_{n+1}=\left\lfloor\frac{1}{G^{n+1}(\rho)}\right\rfloor$ and then:
\begin{align*}
\mu(I_{n+2})&=\mu(I_n)-a_{n+1}\,\mu(I_{n+1})=\left(\frac{1}{G^{n+1}(\rho)}-a_{n+1}\right)\prod_{j=0}^{n+1}G^j(\rho)=\\
&=\left\{\frac{1}{G^{n+1}(\rho)}\right\}\prod_{j=0}^{n+1}G^j(\rho)=G\big(G^{n+1}(\rho)\big)\prod_{j=0}^{n+1}G^j(\rho)=\prod_{j=0}^{n+2}G^j(\rho)\,.
\end{align*}
This implies that \eqref{formuInG} holds for all $n\in\nt$.
\end{proof}

\subsection{Renormalization trails and ancestors}\label{sectrails} Consider the rectangle $R=[0,1]\times[-1,1]$ in $\R^2$, and let $M=\big([0,1]\setminus\Q\big)\times[-1,1] \subset R$. Recall, from Section \ref{subseccombpart}, that $f:S^1 \to S^1$ denotes an orientation preserving circle homeomorphism with irrational rotation number $\rho=[a_0,a_1,a_2,...]$. Let us fix some point $x$ in the unit circle. For any given $y$ in $S^1$, we will define/construct in this section a sequence of pairs $(\rho_n,\alpha_n) \in M$, called renormalization trail (see Definition \ref{deftrails} below) of $y$ with respect to $x$ and $f$. Let us define simultaneously the initial cases $n=0$ and $n=1$. First, let $\rho_0=\rho=[a_0,a_1,a_2,...]\in[0,1]\setminus\Q$ and $\rho_1=G(\rho)=[a_1,a_2,...]\in[0,1]\setminus\Q$. To define $\alpha_0$ and $\alpha_1$ consider both intervals
$$I_0=\big(x,f(x)\big]\quad\mbox{and}\quad
I_1=\big(f^{a_0}(x),x\big]$$as defined in Section \ref{subseccombpart}. If $y$ belongs to the short interval $I_1$ we define:$$\alpha_0=\mu\big((x,y)\big)\in[0,1-a_0\,\rho_0]\quad\mbox{and}\quad
\alpha_1=-\,\frac{\mu\big((x,y)\big)}{\mu(I_{1})}\,\in[-1,0].$$

Otherwise, there exist $y_0$ in the long interval $I_0$ and $i\in\{0,1,...,a_0-1\}$ such that $f^i(y_0)=y$,
in which case we define:$$\alpha_0=1-\big[\mu\big((x,y_0)\big)+i\,\rho_0\big]=1-\mu\big((x,y)\big)\in[1-a_0\,\rho_0,1]\quad\mbox{and}\quad
\alpha_1=\frac{\mu\big((x,y_0)\big)}{\mu(I_0)}\,\in[0,1].$$

Note that, in the definition of $\alpha_0$, we are measuring in the \emph{counterclockwise} sense: in the first case, we measure $\mu\big((x,y)\big)$ considering the arc determined by $x$ and $y$ which is contained in $I_1$, while in the second case we measure $\mu\big((x,y_0)\big)$ considering the arc determined by $x$ and $y_0$ which is contained in $I_0$. In this way we obtain the first two terms of the sequence of pairs
$(\rho_n,\alpha_n) \in M=\big([0,1]\setminus\Q\big)\times[-1,1]$. After the first $n$ terms are defined, let $\rho_{n+1}\in[0,1]\setminus\Q$ be given by$$\rho_{n+1}=G^{n+1}(\rho)=G^{n+1}\big([a_0,a_1,...]\big)=[a_{n+1},a_{n+2},...]\,.$$

If $y$ belongs to the long interval $f^i(I_n)$ for some $i\in\{0,1,...,q_{n+1}-1\}$,
let $y_n \in I_n$ be such that $f^i(y_n)=y$. Otherwise, $y$ belongs to the short interval
$f^j(I_{n+1})$ for some $j\in\{0,1,...,q_{n}-1\}$, and then let $y_n \in I_{n+1}$ be given by $f^j(y_n)=y$. In the first case, see Figure \ref{trails}, we define
$$\alpha_{n+1}=\frac{\mu\big((x,y_n)\big)}{\mu(I_n)}\,\in[0,1],$$
while in the second case we define
$$\alpha_{n+1}=-\,\frac{\mu\big((y_n,x)\big)}{\mu(I_{n+1})}\,\in[-1,0].$$

\begin{figure}[h!]
\begin{center}~
\hbox to \hsize{\psfrag{0}[][][1]{$0$} \psfrag{In}[][][1]{$I_n$}
\psfrag{In1}[][][1]{$I_{n+1}$}
\psfrag{f}[][][1]{$f^{q_n}$}
\psfrag{F}[][][1]{$f^{q_{n+1}}$}
\psfrag{Y}[][][1]{$Y_n$}
\psfrag{x}[][][1]{$x$}
\psfrag{y}[][][1]{$y_n$}
\psfrag{xn}[][][1]{$f^{q_n}(x)$}
\psfrag{xn1}[][][1]{$f^{q_{n+1}}(x)$}
\psfrag{a}[][][1]{$\alpha_{n+1}=\dfrac{\mu(Y_n)}{\mu(I_n)}$}
\hspace{1.0em} \includegraphics[width=4.5in]{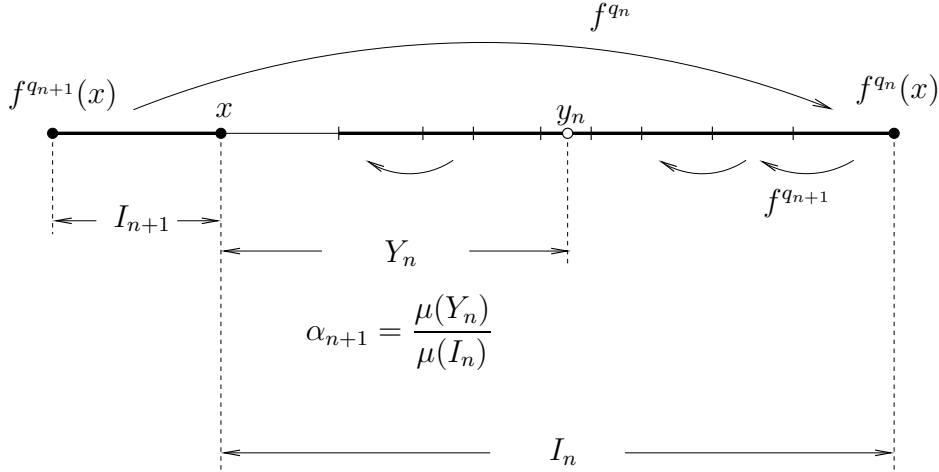}
   }
\end{center}
\caption[trails]{\label{trails} Calculating renormalization trails.}
\end{figure}

The points $y_n$, ${n\geq 0}$, defined above are called the \emph{renormalization ancestors} of $y$ (with respect to $x$ and $f$). But we are more interested in the sequence of pairs $(\rho_n,\alpha_n) \in M=\big([0,1]\setminus\Q\big)\times[-1,1]$. We therefore also give it a name.

\begin{definition}\label{deftrails} The sequence $\big\{(\rho_n,\alpha_n)\big\}_{n \geq 0} \subset M$ is called the \emph{renormalization trail}, or simply the \emph{trail}, of the point $y$ with respect to $x$ and $f$.
\end{definition}

In Section \ref{provaproptrails} we will prove the following result.

\begin{theorem}\label{propresiduais} There exists a full Lebesgue measure set\, $\bm{R}\subset[0,1]$ of irrational numbers with the following property: given a minimal circle homeomorphism $f$ with $\rho(f)\in\bm{R}$ and given any point $x \in S^1$ there exists a set\, $\mathcal{B}_{x}\subset S^1$ which is residual (in the Baire sense) and has full $\mu_f$-measure such that for all $y \in \mathcal{B}_{x}$ the renormalization trail $\big\{(\rho_n,\alpha_n)\big\}$ of $y$ (with respect to $x$ and $f$) is dense in the rectangle $[0,1]\times[-1,1]$.
\end{theorem}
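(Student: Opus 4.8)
The plan is to show that the renormalization trail of a point $y$ is, after a harmless reduction, nothing but the forward orbit under an explicit \emph{skew product} $\hat T$ over the Gauss map of a point lying on the fibre over $\rho=\rho(f)$, and then to extract the theorem from the ergodicity and the topological exactness of $\hat T$ (established in Section \ref{Secskew} and Appendix \ref{apperg}) by soft measure-theoretic and Baire-category arguments.

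First I would reduce to the rigid rotation. Since $f$ is minimal with irrational rotation number $\rho$, it has no wandering intervals and is conjugate to $R_\rho$ by a homeomorphism $h$ carrying $\mu_f$ to Lebesgue; inspecting the construction of Section \ref{sectrails}, the trail of $y$ with respect to $x$ and $f$ depends only on the $\mu_f$-measures of arcs and on the combinatorics, hence coincides with the trail of $h(y)$ with respect to $h(x)$ and $R_\rho$. So it suffices to treat $f=R_\rho$ with $\mu_f=\Leb$ and an arbitrary marked point $x$. Next --- and this is the technical core --- I would verify that there is a skew product
\[
\hat T(\rho,\alpha)=\big(G(\rho),\phi_\rho(\alpha)\big),\qquad (\rho,\alpha)\in[0,1]\times[-1,1],
\]
with each fibre map $\phi_\rho$ a piecewise-affine (or piecewise-M\"obius), uniformly expanding, full-branch Markov map of $[-1,1]$ depending only on the leading partial quotients of $\rho$, such that the trail $\{(\rho_n,\alpha_n)\}$ of $y$ is \emph{exactly} the forward $\hat T$-orbit of $\big(\rho,\alpha_0(y)\big)$. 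This comes from chasing the self-similar subdivision $I_n=I_{n+2}\sqcup\bigsqcup_{k=0}^{a_{n+1}-1}f^{kq_{n+1}}(I_{n+1})$ of the atoms of the dynamical partitions through the invariant measure, using the Gauss-map identities of Lemma \ref{formmuIn}; the bookkeeping splits into cases according to whether the ancestor $y_n$ lies in a long or a short atom and to the sign of $\alpha_n$, but is otherwise a finite computation. I expect this identification, together with pinning down the precise form of $\phi_\rho$ needed to feed into the ergodicity/exactness statements, to be the main obstacle; the remaining steps are essentially soft.

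Granting this, let $E\subset M=\big([0,1]\setminus\Q\big)\times[-1,1]$ be the set of points whose forward $\hat T$-orbit is dense in $[0,1]\times[-1,1]$. Ergodicity of $\hat T$ with respect to an invariant probability measure equivalent to Lebesgue on the rectangle, together with Birkhoff's theorem, gives that $E$ has full Lebesgue measure in $M$. A standard Baire-category argument on the Polish space $M$, using the topological exactness of $\hat T$ (and removing the co-meagre, co-null set of orbits meeting the nowhere-dense ``grid'' of discontinuities of the fibre maps), gives that $E$ is residual in $M$. Now apply the Kuratowski--Ulam theorem and the ordinary Fubini theorem to the Borel set $E$: the set of irrationals $\rho$ for which the vertical slice $E_\rho=\{\alpha:(\rho,\alpha)\in E\}$ fails to be residual in $[-1,1]$ is meagre, and the set of $\rho$ for which $E_\rho$ fails to have full $d\alpha$-measure is Lebesgue-null. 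Define $\bm{R}$ to be the set of irrational $\rho\in[0,1]$ lying outside both exceptional sets; then $\bm{R}$ has full Lebesgue measure (it is in fact also residual).

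Finally, fix $\rho\in\bm{R}$ and a minimal $f$ with $\rho(f)=\rho$; by the reduction above we may take $f=R_\rho$ and $\mu_f=\Leb$. The map $\Psi\colon y\mapsto\big(\rho,\alpha_0(y)\big)$ is, off the countable orbit of $x$, a bijection onto $\{\rho\}\times[0,1]$ which is a homeomorphism onto its image and which carries $\mu_f$ to normalized Lebesgue on that fibre (it is piecewise affine with slopes $\pm 1$). Hence the set $\mathcal{B}_x:=\{y\in S^1:\text{the trail of }y\text{ is dense in }[0,1]\times[-1,1]\}$ equals $\Psi^{-1}(E_\rho)$ up to a countable set, and therefore it is residual in $S^1$ and has full $\mu_f$-measure, because $E_\rho$ enjoys both properties for $\rho\in\bm{R}$. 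Transporting back by the conjugacy $h$ handles the general minimal $f$ and completes the proof; the only non-soft ingredients are the identification of the trail with a $\hat T$-orbit and the ergodicity and topological exactness of $\hat T$, both treated separately.
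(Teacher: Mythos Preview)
Your overall strategy is the same as the paper's: identify the trail with the forward orbit of the skew product $T$ (this is Lemma~\ref{leminhacomb}), invoke Proposition~\ref{genorbdensas} to get a set $E=\mathcal{G}_0\subset R$ that is both residual and of full Lebesgue measure, and then slice over $\rho$. The identification step and the final transfer via the measure-preserving map $y\mapsto\alpha_0(y)$ are fine.

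The gap is in your slicing argument. You appeal to Kuratowski--Ulam to conclude that the set of $\rho$ for which the fibre $E_\rho$ fails to be residual is meagre, and to Fubini to conclude that the set of $\rho$ for which $E_\rho$ fails to have full measure is null; you then let $\bm{R}$ be the complement of the union of these two exceptional sets and assert that $\bm{R}$ has full Lebesgue measure. But a meagre set can have positive (even full) Lebesgue measure, so the Kuratowski--Ulam exceptional set need not be null, and your $\bm{R}$ need not have full measure. The theorem requires $\bm{R}$ to have \emph{full Lebesgue measure} and, for every $\rho\in\bm{R}$, the slice to be \emph{both} residual and of full $\mu_f$-measure; your argument only delivers this on a set of $\rho$ that is simultaneously comeagre and conull, which is in general strictly smaller than a conull set.

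The paper avoids Kuratowski--Ulam entirely by exploiting the explicit structure of $\mathcal{G}_0$: it is written as $\mathcal{G}_0=\bigcap_n A_n$ where each $A_n=\bigcup_{k\geq 0}T^{-k}(B_n)$ is \emph{open} and has \emph{full Lebesgue measure} in $R$. Applying Fubini to each $A_n$ and intersecting over $n$, one finds a full-measure set $\bm{R}$ of $\rho$ such that every slice $(A_n)_\rho$ has full one-dimensional Lebesgue measure; since slices of open sets are open, each $(A_n)_\rho$ is then open and dense in $[-1,1]$, so $(\mathcal{G}_0)_\rho=\bigcap_n (A_n)_\rho$ is automatically residual as well. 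Thus Fubini alone (used countably many times) yields both properties of the slice for Lebesgue-a.e.\ $\rho$, and no category-theoretic slicing theorem is needed. Replacing your Kuratowski--Ulam step with this observation fixes the argument.
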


Being dense in $[0,1]$, the orbit under the Gauss map of any element of\, $\bm{R}$ accumulates at the origin. In particular,\, $\bm{R}$ is disjoint from the set $\bt\subset[0,1]$ of \emph{bounded type} numbers. Note also that $\mathcal{B}_{x}$ is disjoint from $\mathcal{O}_f^+(x)=\big\{x,f(x),f^2(x),...\big\}$, since for $n\geq 0$ the second coordinate of the renormalization trail of $f^n(x)$ with respect to $x$ and $f$ eventually becomes constant equal to $0$. As mentioned, the proof of Theorem \ref{propresiduais} will be given in Section \ref{provaproptrails}.

\subsection{Some tools}\label{sectools} We finish Section \ref{sechomeos} reviewing some classical tools from one-dimensional dynamics, that will be used along the text.

One important tool is the control of {\it cross-ratio distortion\/}. There are several cross-ratios used in the
study of one-dimensional dynamical systems, all equivalent. In the present paper (more precisely, in the proof of Lemma \ref{crossratiodistortion}),
we use the following version.
Given two intervals $M\subset T\subset S^{1}$ with $M$ compactly contained in $T$ (written $M\Subset T$) let us denote by $L$ and $R$ the
two connected components of $T\setminus M$. We define the \emph{cross-ratio} of the pair $M,T$ to be the ratio
\begin{equation*}
[M,T]= \frac{|M|\,|T|}{|L|\,|R|} \in (0,\infty).
\end{equation*}

The cross-ratio is preserved by M\"obius transformations. Moreover, it is weakly expanded by maps with negative Schwarzian derivative (see Lemma \ref{contracts} below). To be more precise, let $f:S^{1}\to S^{1}$ be a continuous map, and let $U\subseteq S^{1}$ be an open set such that $f|_{U}$ is a homeomorphism onto its image. If $M\subset T\subset U$ are intervals, with $M\Subset T$,
the \textit{cross-ratio distortion} of the map $f$ on the pair of intervals $(M,T)$ is defined to be the ratio of cross-ratios
\begin{equation*}
\crd(f;M,T)= \frac{\big[f(M),f(T)\big]}{[M,T]}.
\end{equation*}

If $f|T$ is a M\"obius transformation, then we have that $\crd(f;M,T)=1$. When $f|T$ is a diffeomorphism onto its image and $\log{Df}|_{T}$
has {\it bounded variation\/} in $T$ (for instance, if $f$ is a $C^2$ diffeomorphism), we obtain
$\crd(f;M,T)\leq e^{2V}$, where $V=\mathrm{Var}(\log{Df}|T)$.
We shall use the following chain rule in iterated form:
\begin{equation}\label{CRIchainrule}
 \crd(f^j;M,T) = \prod_{i=0}^{j-1} \crd(f;f^{i}(M), f^{i}(T))\ .
\end{equation}

There is a relationship between quasisymmetry and distortion of cross-ratios, but a full discussion of it would constitute a lengthy digression. There is in fact only one place in the present paper (in Section \ref{StheproofofA}) where a particular instance of this relationship is required. What we need is a simple consequence of the following result, which we state without proof (\emph{cf.} \cite[p.~130]{dFdMbook}).

\begin{prop}\label{qscross1} If $\phi:S^1 \to S^1$ is quasisymmetric, then there exists a non-decreasing function $\sigma:[0,\infty)\to [0,\infty)$ with $\sigma(t)\to 0$ as $t\to 0$ such that $[\phi(M),\phi(T)] \leq \sigma([M,T])$ for every pair of intervals $M,T\subset S^1$ with $M$ compactly contained in the interior of $T$.
\end{prop}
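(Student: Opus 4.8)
The plan is to deduce the cross-ratio bound from the classical equivalence between the ``two-point'' (symmetric) form of quasisymmetry used in the definition above and its ``three-point'' form. Passing to the universal cover, $\phi$ lifts to a homeomorphism of $\R$ that is quasisymmetric in the same sense with constant controlled by $M$; and a classical fact (going back to Beurling and Ahlfors; see \cite{dFdMbook}) then produces a homeomorphism $\eta=\eta_M:[0,\infty)\to[0,\infty)$, depending only on $M$, such that
\begin{equation}\label{eq:3ptQS}
\frac{\big|\phi(a)-\phi(b)\big|}{\big|\phi(a)-\phi(c)\big|}\;\leq\;\eta\!\left(\frac{|a-b|}{|a-c|}\right)
\end{equation}
for every triple of distinct points $a,b,c\in\R$. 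Being a homeomorphism fixing the origin, $\eta$ is non-decreasing, finite on $[0,\infty)$, and satisfies $\eta(t)\to 0$ as $t\to 0$.

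Granting \eqref{eq:3ptQS}, I would finish by a short computation. Since the cross-ratio $[M,T]$ is defined, $T$ is a proper arc; lift it to an interval of $\R$ with endpoints $t_1<m_1<m_2<t_2$, so that $M=(m_1,m_2)$, $L=(t_1,m_1)$, $R=(m_2,t_2)$ and $|\phi(T)|=|\phi(L)|+|\phi(M)|+|\phi(R)|$. From $|T|\geq|R|$ and $|T|\geq|L|$ one gets $|M|/|L|\leq[M,T]$ and $|M|/|R|\leq[M,T]$; feeding the triples $(m_1,m_2,t_1)$ and $(m_2,m_1,t_2)$ into \eqref{eq:3ptQS} and using monotonicity of $\eta$ gives $|\phi(M)|\leq\eta([M,T])\,|\phi(L)|$ and $|\phi(M)|\leq\eta([M,T])\,|\phi(R)|$. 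Expanding
\[
[\phi(M),\phi(T)]\;=\;\frac{|\phi(M)|}{|\phi(L)|}\;+\;\frac{|\phi(M)|^{2}}{|\phi(L)|\,|\phi(R)|}\;+\;\frac{|\phi(M)|}{|\phi(R)|}
\]
and inserting the two bounds yields $[\phi(M),\phi(T)]\leq\big(1+\eta([M,T])\big)^{2}-1$, so one may take $\sigma(t)=\big(1+\eta(t)\big)^{2}-1$, which is non-decreasing, finite on $[0,\infty)$, and tends to $0$ as $t\to 0$.

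The real content, and the point I expect to need care, is \eqref{eq:3ptQS} itself. For every use of this proposition in the present paper one only needs the behaviour of $\eta$ near $t=0$ (we always have $[M,T]\to 0$), and that half can be proved by hand. Given adjacent intervals $A=(p,q)$ and $B=(q,r)$ with $|A|\leq 2^{-k}|B|$, build nested intervals $E_0=A$ and $E_{j+1}=E_j\cup F_j$, where $F_j$ is the interval of the same length as $E_j$ abutting $E_j$ on the side of $B$; since $\sum_{j<k}|F_j|=(2^{k}-1)|A|<|B|$ one has $E_k\subset A\cup B$. Applying the two-point inequality at the common endpoint of $E_j$ and $F_j$ (both of the same length) gives $|\phi(F_j)|\geq M^{-1}|\phi(E_j)|$, hence $|\phi(E_k)|\geq(1+M^{-1})^{k}|\phi(A)|$; combined with $|\phi(E_k)|\leq|\phi(A)|+|\phi(B)|$ this gives
\[
|\phi(A)|\;\leq\;\frac{|\phi(B)|}{(1+M^{-1})^{k}-1}\,,
\]
which tends to $0$ as $k\to\infty$. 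Taking $k\asymp\log_2(1/t)$ (and using the trivial bound $|\phi(A)|\leq M|\phi(B)|$, valid whenever $|A|\leq|B|$, to cover $t$ near $1$) yields an admissible non-decreasing $\eta$ on $(0,1]$, which is all that is needed here. The genuinely non-elementary point --- that $\eta$ may be taken finite for \emph{large} arguments, i.e.\ the full quasi-M\"obius property of line quasisymmetries --- is classical (see \cite{dFdMbook}); since it plays no role in this paper, one may alternatively just extend $\sigma$ beyond $[0,1]$ by an arbitrary non-decreasing majorant.
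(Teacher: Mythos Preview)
Your argument is correct. The paper itself does not prove this proposition: it is stated explicitly ``without proof'' with references to \cite{astala2009} and \cite[p.~130]{dFdMbook}, and only its corollary (weakly bounded cross-ratio distortion) is used later. So there is no ``paper's own proof'' to compare against; you have supplied one where the authors chose to cite.

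Your route --- reduce to the three-point ($\eta$-quasisymmetric) form \eqref{eq:3ptQS}, then expand $[\phi(M),\phi(T)]$ as a sum of three ratios and bound each via $\eta$ --- is the standard one, and the algebra leading to $\sigma(t)=(1+\eta(t))^2-1$ is clean and correct. Your elementary dyadic-cascade derivation of $\eta$ for small arguments is also fine and, as you rightly observe, is the only regime that matters for the single application in \S\ref{StheproofofA}, where $[M,T]\asymp a_{n_i+1}^{-2}\to 0$. One cosmetic point: you use the symbol $M$ both for the quasisymmetry constant and for the middle interval; in a final version you should rename one of them to avoid confusion.
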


A proof of this result may be found in \cite{astala2009}. In order to state the corollary in simple terms, it is best to introduce a definition. We say that a homeomorphism $\phi:S^1 \to S^1$ \emph{has weakly bounded cross-ratio distortion} if for every pair of constants $0<\alpha<\beta<\infty$ there exists $B_{\alpha,\beta}>0$ such that $\crd(\phi,M,T)\leq B_{\alpha,\beta}$ for every pair of intervals $M,T$ (with $M$ compactly contained in the interior of $T$) such that $\alpha\leq [M,T]\leq \beta$. 

\begin{coro}\label{qscross2} Every quasisymmetric homeomorphism of the circle has weakly bounded cross-ratio distortion.
\end{coro}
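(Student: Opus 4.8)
The plan is to obtain the corollary as an immediate consequence of Proposition \ref{qscross1}, which carries all the analytic weight. First I would unwind the definition $\crd(\phi;M,T)=[\phi(M),\phi(T)]/[M,T]$, so that bounding cross-ratio distortion reduces to bounding the numerator against the denominator.

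Then I would fix constants $0<\alpha<\beta<\infty$ and consider an arbitrary pair of intervals $M\Subset T$ with $\alpha\leq[M,T]\leq\beta$. Let $\sigma:[0,\infty)\to[0,\infty)$ be the non-decreasing function supplied by Proposition \ref{qscross1}. Since $\sigma$ is non-decreasing and $[M,T]\leq\beta$, one gets $[\phi(M),\phi(T)]\leq\sigma([M,T])\leq\sigma(\beta)<\infty$; since $[M,T]\geq\alpha>0$, dividing yields
\[
\crd(\phi;M,T)=\frac{[\phi(M),\phi(T)]}{[M,T]}\leq\frac{\sigma(\beta)}{\alpha}.
\]
Thus $B_{\alpha,\beta}=\sigma(\beta)/\alpha$ is a uniform bound over all admissible pairs $(M,T)$, which is exactly the definition of having weakly bounded cross-ratio distortion.

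The only point to check is a bookkeeping one: Proposition \ref{qscross1} is stated precisely for intervals $M$ compactly contained in the interior of $T$, which is the same class of pairs appearing in the definition of weakly bounded cross-ratio distortion, so no limiting or approximation step is needed. There is effectively no obstacle here: the genuine difficulty -- controlling $[\phi(M),\phi(T)]$ in terms of $[M,T]$ for a quasisymmetric map -- is already resolved in Proposition \ref{qscross1} (ultimately via the quasisymmetric distortion estimates of \cite{astala2009}), and the corollary is merely the remark that a two-sided bound on $[M,T]$ converts that estimate into a bound on the ratio.
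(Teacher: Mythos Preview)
Your argument is correct and is precisely the ``straightforward consequence of Proposition \ref{qscross1}'' that the paper invokes without further detail. You have simply made explicit the bound $B_{\alpha,\beta}=\sigma(\beta)/\alpha$ that is implicit in the paper's one-line proof.
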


\begin{proof} This is a straightforward consequence of Proposition \ref{qscross1}. 
\end{proof}

This corollary will be used in its contrapositive, as a criterion for \emph{non-quasi symmetry} (see Section \ref{StheproofofA}).

\medskip

Recall that for a given $C^3$ map $f$, the \textit{Schwarzian derivative} of $f$ is the differential operator defined for all $x$
 regular point of $f$ by:
 \begin{equation*}
  Sf(x)= \dfrac{D^{3}f(x)}{Df(x)} - \dfrac{3}{2} \left( \dfrac{D^{2}f(x)}{Df(x)}\right)^{2}.
 \end{equation*}

The relation between the Schwarzian derivative and cross-ratio distortion is given by the following well known fact.

\begin{lemma}\label{contracts} If $f$ is a $C^3$ diffeomorphism with $Sf<0$, then for any two intervals $M\subset T$ contained in the domain
of $f$ we have $\crd(f;M,T)>1$, that is, $\big[f(M),f(T)\big]>[M,T]$.
\end{lemma}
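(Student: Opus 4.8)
The plan is to reduce the inequality $\crd(f;M,T)>1$ to a statement about an auxiliary map obtained by removing the ``Möbius part'' of $f$. Write $M=(b,c)$ and $T=(a,d)$ with $a<b<c<d$, so that $[M,T]=\frac{(c-b)(d-a)}{(b-a)(d-c)}$, and assume $Df>0$ on $T$ (otherwise conjugate by the affine involution $x\mapsto -x$, which preserves all cross-ratios and the sign of the Schwarzian). Let $\mu$ be the unique Möbius transformation with $\mu(a)=f(a)$, $\mu(b)=f(b)$, $\mu(d)=f(d)$; since the source and target triples are in the same increasing order, $\mu$ maps $[a,d]$ diffeomorphically and increasingly onto $[f(a),f(d)]$, so $\psi:=\mu^{-1}\circ f\colon[a,d]\to[a,d]$ is a well-defined $C^3$ diffeomorphism fixing $a$, $b$ and $d$, with $\psi'>0$. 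Möbius transformations preserve cross-ratios, so $[f(M),f(T)]=[\psi(M),\psi(T)]$, and since $\psi(a)=a$, $\psi(b)=b$, $\psi(d)=d$, an elementary computation gives
\[
\crd(f;M,T)=\frac{[\psi(M),\psi(T)]}{[M,T]}=\frac{\psi(c)-b}{c-b}\cdot\frac{d-c}{\,d-\psi(c)\,}\,.
\]
As $\psi$ is increasing we have $b<\psi(c)<d$, and the right-hand side exceeds $1$ if and only if $\psi(c)>c$. Thus the lemma reduces to: \emph{if $S\psi<0$ on $[a,d]$ and $\psi$ is an increasing $C^3$ map fixing $a<b<d$, then $\psi(x)>x$ for all $x\in(b,d)$.}

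For this I would run a minimum-principle argument on $\psi''$. By the cocycle rule for the Schwarzian, $S\psi=Sf<0$ because $\mu^{-1}$ is Möbius. At any zero $s$ of $\psi''$ one has $S\psi(s)=\psi'''(s)/\psi'(s)<0$ with $\psi'>0$, so $\psi'''(s)<0$; hence every zero of $\psi''$ is simple and $\psi''$ is strictly decreasing through it, which forces $\psi''$ to have at most one zero on $[a,d]$. Therefore $\psi'-1$ has at most two zeros and $\psi-\mathrm{id}$ at most three; since $\psi-\mathrm{id}$ already vanishes at $a<b<d$, these are its only zeros, all simple, and it changes sign at each. By Rolle's theorem, $\psi'-1$ has exactly one zero $r_1\in(a,b)$ and one zero $r_2\in(b,d)$, and $\psi''$ has its unique zero $s$ in $(r_1,r_2)$; since $\psi''$ decreases through $s$, we get $\psi''>0$ on $(a,s)$ and $\psi''<0$ on $(s,d)$, so $\psi'$ increases then decreases, whence $\psi'>1$ exactly on $(r_1,r_2)$ and $\psi'<1$ on $(a,r_1)\cup(r_2,d)$. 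Now $\psi(c)-c=\int_b^c(\psi'-1)$ because $\psi(b)=b$, and $\int_b^d(\psi'-1)=\psi(d)-d=0$. For $c\in(b,r_2]$ the integrand is positive, so $\psi(c)>c$. For $c\in(r_2,d)$ write $\psi(c)-c=\int_b^{r_2}(\psi'-1)+\int_{r_2}^c(\psi'-1)$ and bound the second (negative) term below by $\int_{r_2}^{d}(\psi'-1)=-\int_b^{r_2}(\psi'-1)$, again giving $\psi(c)>c$. This proves the reduced statement, and with it $\crd(f;M,T)>1$, strictly.

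The only genuinely non-routine point is the normalization step: one must know that the Möbius map $\mu$ interpolating the three ordered pairs $(a,f(a))$, $(b,f(b))$, $(d,f(d))$ has no pole in $(a,d)$ and restricts there to an increasing homeomorphism onto $[f(a),f(d)]$ — the classical fact that three ordered points together with three ordered target values determine a Möbius map carrying the closed interval between the outer source points onto the one between the outer targets; this can be checked by tracking the monotonicity of the cross-ratio $x\mapsto(a,b;x,d)$. Everything else is the elementary cross-ratio algebra displayed above together with the Rolle-and-sign-chase for $\psi$, both routine.
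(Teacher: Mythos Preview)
Your argument is correct. The paper itself does not prove this lemma at all: it simply refers the reader to the appendix of \cite{EdFG}. What you have written is in fact the classical proof, via M\"obius normalization followed by the minimum principle for negative-Schwarzian maps (the observation that $S\psi<0$ forces $\psi''$ to have at most one zero, hence $\psi-\mathrm{id}$ at most three). Each step checks out: the reduction to $\psi(c)>c$ via the monotone function $x\mapsto (x-b)/(d-x)$ is clean, the sign analysis of $\psi'-1$ on $(a,d)$ is correct, and your integral splitting for $c\in(r_2,d)$ is exactly the right way to close the argument. The caveat you flag about $\mu$ having no pole in $[a,d]$ is well placed and your sketch of how to verify it (or alternatively: compose the two M\"obius maps sending the source and target triples to $(0,1,\infty)$) is adequate.

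So there is nothing to compare against in the present paper; your proof is self-contained, correct, and essentially the standard one found in the one-dimensional dynamics literature (e.g.\ de Melo--van Strien).
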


For a proof of Lemma \ref{contracts} see for instance the appendix in \cite{EdFG}.
We recall now the definition of an \emph{almost parabolic map}, as given in \cite[Section 4.1, page 354]{edsonwelington1}.

\begin{definition}\label{def:apm} An \textit{almost parabolic map} is a negative-Schwarzian $C^3$ diffeomorphism$$\phi \colon  J_1\cup J_2\cup \cdots \cup J_\ell \;\to\; J_2\cup J_3\cup \cdots \cup J_{\ell+1},$$such that $\phi(J_k)= J_{k+1}$ for all $1\leq k\leq \ell$, where $J_1,J_2, \ldots, J_{\ell+1}$ are consecutive intervals on the circle (or on the line). The positive integer $\ell$ is called the \textit{length} of $\phi$, and the positive real number
  \[
    \sigma =\min\left\{\frac{|J_1|}{|\cup_{k=1}^\ell J_k|}\,,\, \frac{|J_\ell|}{|\cup_{k=1}^\ell J_k|}     \right\}
  \]is called the \textit{width\/} of $\phi$.
  \end{definition}

The fundamental geometric control on almost parabolic maps is given by the following result.

  \begin{lemma}[Yoccoz's lemma]\label{yoccozlemma}
  Let $\phi \colon \bigcup_{k=1}^\ell J_k \to \bigcup_{k=2}^{\ell+1} J_k$ be an almost parabolic map with length $\ell$ and
  width $\sigma$. There exists a constant $C_\sigma>1$ (depending on $\sigma$ but not on $\ell$) such that,
  for all $k=1,2,\ldots,\ell$, we have
  \begin{equation}\label{yocineq}
    \frac{C_\sigma^{-1}|I|}{[\min\{k,\ell+1-k\}]^2} \;\leq\; |J_k| \;\leq\;  \frac{C_\sigma|I|}{[\min\{k,\ell+1-k\}]^2}\ ,
  \end{equation}
  where $I=\bigcup_{k=1}^\ell J_k$ is the domain of $\phi$.
  \end{lemma}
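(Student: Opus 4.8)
The plan is to reduce the statement to a sharp estimate for the orbit of a single point under a negative-Schwarzian diffeomorphism that is close to the identity, and then to extract the $[\min\{k,\ell+1-k\}]^{-2}$ decay from the classical Koebe/cross-ratio machinery. First I would set $I=\bigcup_{k=1}^{\ell}J_k$ and consider the maximal interval $T\supset I$ on which $\phi$ (or its natural extension) is still a diffeomorphism; because $\phi(J_k)=J_{k+1}$ we may think of $\phi$ as realizing $\ell$ consecutive iterates of a first-return-type map, so the intervals $J_1,\dots,J_{\ell+1}$ are the consecutive images $\phi^{k-1}(J_1)$. The width hypothesis says that the two extreme intervals $J_1$ and $J_\ell$ are each a definite fraction $\sigma$ of $|I|$; this is exactly the ``definite space'' one needs to invoke Koebe-type distortion bounds with constants depending only on $\sigma$.

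Second, the core of the argument is a two-sided comparison. For the \emph{upper} bound on $|J_k|$ I would use that $\phi$ has negative Schwarzian derivative: by Lemma \ref{contracts} (cross-ratios are weakly expanded), applying $\crd$ to suitable nested pairs of intervals built from $J_1,\dots,J_{\ell+1}$ and telescoping via the chain rule \eqref{CRIchainrule}, one gets that the ``middle'' intervals cannot be too large relative to their distance (measured in index) from the two ends. Concretely, writing $s_k=|J_1|+\cdots+|J_k|$, the cross-ratio inequality applied to the configuration consisting of $J_k$ sitting inside the gap between the two tails $\bigcup_{i\le k-1}J_i$ and $\bigcup_{i\ge k+1}J_i$, pushed forward under one iterate of $\phi$, yields a recursive inequality of the Riccati type $\Delta(1/s_k)\gtrsim c_\sigma$, i.e. the reciprocals $1/s_k$ and $1/(|I|-s_k)$ grow at least linearly in $k$ and in $\ell+1-k$ respectively. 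Combining the two gives $|J_k|=s_k-s_{k-1}\lesssim |I|/[\min\{k,\ell+1-k\}]^2$. For the \emph{lower} bound I would run the symmetric argument on $\phi^{-1}$, which also has negative Schwarzian, together with the bounded-geometry input at the two ends (the $\sigma$-hypothesis) to guarantee that the linear growth of $1/s_k$ is not faster than linear; this produces the matching lower estimate $|J_k|\gtrsim |I|/[\min\{k,\ell+1-k\}]^2$. Throughout, all constants are absorbed into a single $C_\sigma$ depending only on $\sigma$, precisely because the only geometric data entering the Koebe estimates is the definite proportion $\sigma$ occupied by $J_1$ and $J_\ell$, and the $\ell$-independence is built into the fact that negative Schwarzian gives cross-ratio \emph{expansion} with no accumulating multiplicative error (the product in \eqref{CRIchainrule} has all factors $\ge 1$).

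Third, I would carefully handle the bookkeeping near the two endpoints $k=1$ and $k=\ell$, where $\min\{k,\ell+1-k\}$ equals $1$ and the claimed bound just says $|J_1|,|J_\ell|\asymp_\sigma |I|$ — which is immediate from the definition of $\sigma$ — and then interpolate: the Riccati-type growth, once started from these definite endpoints, propagates inward and the two halves meet in the middle, matching up to a factor depending only on $\sigma$. The main obstacle, and the step deserving the most care, is establishing the Riccati recursion with a \emph{uniform} constant: one must verify that the cross-ratio of the relevant nested pair $(J_k, \text{neighbourhood})$ stays bounded away from $0$ and $\infty$ at every step (so that applying $\crd(\phi;\cdot,\cdot)>1$ actually extracts a definite gain), and this is exactly where the negativity of the Schwarzian and the definite-space hypothesis must be used in tandem; a naive telescoping loses a factor growing with $\ell$ unless one sets up the nested intervals so that the expansion is genuinely summable. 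Once that uniform step is in place, the rest is the routine translation between the growth of $1/s_k$ and the quadratic decay of $|J_k|$, which I would not grind through here. (A detailed proof can also be found in \cite{edsonwelington1}.)
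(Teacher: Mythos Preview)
The paper does not give its own proof of this lemma: immediately after the statement it simply says ``For a proof of Lemma \ref{yoccozlemma} see \cite[Appendix B, page 386]{edsonwelington1}.'' Your proposal likewise ends by pointing to \cite{edsonwelington1}, so on the level of what is actually \emph{proved} here you and the paper agree.

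That said, your sketch goes further than the paper by outlining an argument, and the outline is broadly in the right spirit (negative Schwarzian $\Rightarrow$ cross-ratio expansion $\Rightarrow$ Riccati-type recursion for the partial sums $\Rightarrow$ quadratic decay). This is indeed the shape of the argument in \cite{edsonwelington1}. However, two points in your sketch are underspecified in ways that would matter if you tried to write it out. First, the sentence ``consider the maximal interval $T\supset I$ on which $\phi$ (or its natural extension) is still a diffeomorphism'' is not how the actual proof is set up: in the definition of an almost parabolic map there is no ambient extension, and the Koebe space does not come from an enlarged domain but is manufactured internally from the width hypothesis on $J_1$ and $J_\ell$. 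Second, the claim that the lower bound follows by ``the symmetric argument on $\phi^{-1}$, which also has negative Schwarzian'' is not quite right as stated: negative Schwarzian is not preserved under inversion in general, and in the standard proof the lower bound is obtained not by inverting but by a separate (and somewhat more delicate) argument bounding the rate at which $|J_k|$ can shrink, again using cross-ratio expansion in the forward direction together with the endpoint control. If you intend to supply a genuine proof rather than a citation, these two steps need to be reworked.
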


For a proof of Lemma \ref{yoccozlemma} see \cite[Appendix B, page 386]{edsonwelington1}. To be allowed to use Yoccoz's lemma we will
need the following result.

\begin{lemma}\label{negschwarz} For any given multicritical circle map $f$ there exists $n_0=n_0(f)\in\nt$ such that for all $n \geq n_0$ we have
that
\[ Sf^{j}(x)<0\quad\text{for all $j\in \{1, \cdots, q_{n+1}\}$ and for all $x \in I_{n}$ regular point of $f^{j}$.}
 \]
Likewise, we have
\[ Sf^{j}(x)<0\quad\text{for all $j\in \{1, \cdots, q_{n}\}$ and for all $x \in I_{n+1}$ regular point of $f^j$}.
 \]
\end{lemma}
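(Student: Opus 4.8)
The plan is to reduce the statement about negative Schwarzian derivative of iterates to a finite-composition argument controlled by the combinatorics of the dynamical partition. Recall that for a composition $f^j = f\circ f^{j-1}$, the chain rule for the Schwarzian gives
\[
Sf^{j}(x) = \sum_{i=0}^{j-1} \big(Sf\big)\big(f^i(x)\big)\,\big(Df^i(x)\big)^2\,,
\]
so $Sf^j(x)<0$ at every regular point as soon as $Sf<0$ at every regular point visited by the orbit segment $x, f(x),\dots,f^{j-1}(x)$. Thus the crux is purely combinatorial: for $x\in I_n$ and $1\le j\le q_{n+1}$, the points $x, f(x),\dots, f^{j-1}(x)$ should avoid all the critical points of $f$. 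Since $f$ has only finitely many critical points $c_1,\dots,c_N$, this will follow once $n$ is large enough that none of the (finitely many) critical points lies in the interior of one of the first $j\le q_{n+1}$ iterates of $I_n$.

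First I would invoke the combinatorial structure of the dynamical partitions. By definition, the intervals $I_n, f(I_n),\dots, f^{q_{n+1}-1}(I_n)$ together with $I_{n+1},\dots, f^{q_n-1}(I_{n+1})$ tile $S^1$ with pairwise disjoint interiors. Each critical point $c_k$ lies either in the interior of exactly one atom of $\mathcal{P}_n(c_k)$ or is itself an endpoint; since $\mathcal P_n(c_k)$ is built from the orbit of $c_k$ under $f$, the point $c_k$ is always an endpoint of two adjacent atoms of $\mathcal P_n(c_k)$. The key point is that the atoms of $\mathcal{P}_n(x)$ and those of $\mathcal{P}_n(c_k)$ have the same itinerary lengths; because the partitions are nested and $f$ has no wandering intervals, one can choose $n$ large so that the atom $I_n=(x,f^{q_n}(x)]$ is so short that its first $q_{n+1}-1$ iterates, i.e. the long atoms $f^i(I_n)$, each lie strictly inside a single atom of the partition $\mathcal{P}_{n-1}$ and in particular contain no $c_k$ in their interior. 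Concretely, take $n_0$ large enough that for every $c_k$ the atom of $\mathcal P_{n_0}(c_k)$ with left endpoint $c_k$ and the atom with right endpoint $c_k$ are both distinct from (and not contained in) any atom $f^i(I_{n_0})$, $1\le i\le q_{n_0+1}-1$, or $f^j(I_{n_0+1})$, $1\le j\le q_{n_0}-1$; finiteness of the critical set makes this a finite condition, hence achievable.

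Then I would argue as follows. Fix $n\ge n_0$ and $x\in I_n$. For $1\le j\le q_{n+1}$ and a regular point $x\in I_n$ of $f^j$, the orbit segment $x, f(x),\dots, f^{j-1}(x)$ visits $x\in I_n$ and then lands in $f(I_n), f^2(I_n),\dots, f^{j-1}(I_n)$, which are $j-1\le q_{n+1}-1$ of the long atoms of $\mathcal P_n(x)$. By the choice of $n_0$, none of these atoms contains a critical point in its interior, and since $x$ is assumed regular for $f^j$, none of $x, f(x),\dots, f^{j-1}(x)$ is a critical point. Hence $(Sf)(f^i(x))<0$ for $0\le i\le j-1$, and the Schwarzian chain rule displayed above gives $Sf^j(x)<0$. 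The second assertion is identical with $I_{n+1}$ in place of $I_n$ and $q_n$ in place of $q_{n+1}$: the orbit segment now runs through the short atoms $f^i(I_{n+1})$, $0\le i\le j-1\le q_n-1$, which again avoid the critical points by the same choice of $n_0$.

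The main obstacle is making the combinatorial step fully rigorous: one must be careful that "avoiding critical points" is stated for the atoms of $\mathcal{P}_n(x)$ even though the natural partition adapted to $c_k$ is $\mathcal{P}_n(c_k)$. The clean way around this is to note that the atoms of any dynamical partition $\mathcal P_n$, regardless of the base point, have diameters tending to $0$ uniformly as $n\to\infty$ (a standard consequence of minimality and the absence of wandering intervals, via the real bounds of Theorem \ref{realbounds}), while each critical point $c_k$ sits at the common boundary of two adjacent atoms of $\mathcal P_n(c_k)$; comparing $\mathcal P_n(x)$ with $\mathcal P_n(c_k)$ one sees that for $n$ large the only atoms of $\mathcal P_n(x)$ that can meet the set $\{c_1,\dots,c_N\}$ are the (at most $2N$) atoms having some $c_k$ as an endpoint, and an interior point of any other atom — in particular of any $f^i(I_n)$ with $1\le i\le q_{n+1}-1$ once we also exclude the boundary atoms from that range by enlarging $n_0$ — cannot be critical. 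I would spell this out using the nesting $\mathcal P_{n+1}\ \text{refines}\ \mathcal P_n$ and the fact that each $c_k$ belongs to the \emph{closure} of only boundedly many atoms; the rest is the routine Schwarzian chain-rule computation indicated above.
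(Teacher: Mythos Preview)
There is a genuine gap. Your argument rests on the claim that $(Sf)(f^i(x))<0$ for each $i$, which you deduce from the fact that $f^i(x)$ is a regular point of $f$. But Definition~\ref{defmccm} imposes no sign condition on $Sf$ away from the critical set: a $C^3$ multicritical circle map may well have $Sf>0$ on large portions of $S^1$. So knowing that the orbit segment avoids the critical points does not give $Sf<0$ along it, and the chain rule alone does not yield the conclusion. (Incidentally, the combinatorial work you do to show that the orbit avoids the critical points is already contained in the hypothesis: ``$x$ a regular point of $f^j$'' means $Df^j(x)\neq 0$, hence $Df(f^i(x))\neq 0$ for every $0\le i\le j-1$.)

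The actual mechanism, for which the paper refers to \cite[Lemma~4.1]{EdFG}, is almost the opposite of what you propose: one \emph{exploits} the fact that the orbit starts close to a critical point rather than avoids it. Near a non-flat critical point $c$ of criticality $d$ one computes $Sf(z)\sim -\tfrac{1}{2}(d^2-1)(z-c)^{-2}\to -\infty$. Since $I_n$ has a critical point $c$ as an endpoint and $|I_n|\to 0$, the term $i=0$ in the chain rule already contributes $Sf(x)\le -C\,|I_n|^{-2}$. Using the real bounds (Theorem~\ref{realbounds}) to compare $(Df^i(x))^2$ with $|f^i(I_n)|^2/|I_n|^2$, together with the fact that the intervals $f^i(I_n)$ have pairwise disjoint interiors, one then shows that the remaining terms in the sum --- where $Sf$ is merely bounded --- are $o(|I_n|^{-2})$ as $n\to\infty$, and hence cannot cancel the large negative contribution. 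The second assertion is handled the same way with $I_{n+1}$ in place of $I_n$.
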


For a proof of Lemma \ref{negschwarz} see \cite[Lemma 4.1, page 852]{EdFG}. The following lemma is an adaptation of \cite[Lemma 4.2, page 5600]{EdF}. Let $x\in S^1$ and consider the associated dynamical partitions
$\mathcal{P}_n(x)$.

\begin{lemma}\label{criticalspots}
 Let $0\leq k<a_{n+1}$ be such that the interval $\Delta_{k,n}=f^{q_n+kq_{n+1}}(I_{n+1}(x)) \subset I_n(x)$ contains a critical
 point of $f^{q_{n+1}}$. Then{\footnote{Given positive numbers $a$ and $b$, we write $a\asymp b$ to mean that there exists a constant $C>1$, which is either absolute or depends on the real bounds for the map $f$, such that $C^{-1}b\leq a\leq Cb$.}} $|\Delta_{k,n}|\asymp |I_n(x)|$.
\end{lemma}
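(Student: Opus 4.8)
The plan is to transport the geometric control furnished by the real bounds at a \emph{genuine} critical point of $f$ back to the atom $\Delta_{k,n}$. First I would single out that critical point. Since $\Delta_{k,n}$ contains a critical point of $f^{q_{n+1}}$, there is a point $c'\in\Delta_{k,n}$ and an exponent $0\le j<q_{n+1}$ with $f^{j}(c')$ a critical point of $f$; let $i$ be the smallest integer for which $f^{i}(\Delta_{k,n})$ meets $\mathrm{Crit}(f)$, and choose $c\in f^{i}(\Delta_{k,n})\cap\mathrm{Crit}(f)$. Then $i<q_{n+1}$ and $f^{j}(\Delta_{k,n})$ is free of critical points for $0\le j<i$, so $f^{i}|_{\Delta_{k,n}}$ is a $C^{3}$ diffeomorphism onto $V:=f^{i}(\Delta_{k,n})$, and $c\in V$ (I will assume $c$ interior to $V$, the boundary case forcing $c$ onto the forward orbit of $x$ and being an easy variant). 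For $n$ large, depending only on $f$, Lemma~\ref{negschwarz} gives in addition $Sf^{j}<0$ on $I_{n}(x)$ at regular points for all $j\le q_{n+1}$.

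Next I would pin down the size of $V$ via the real bounds at $c$. By invariance of $\mu$ and Lemma~\ref{formmuIn}, $\mu(V)=\mu(\Delta_{k,n})=\mu(I_{n+1})=\mu\big(I_{n+1}(c)\big)$; since $c$ is interior to $V$ and $\mu(I_{n+2})<\mu(I_{n+1})$, a short measure computation shows that $V$ is contained in the union of three consecutive atoms of $\mathcal P_{n+1}(c)$ straddling $c$, namely $I_{n+1}(c)$, $I_{n+2}(c)$ and the adjacent long atom. For $n\ge n_{0}(f)$ the real bounds (Theorem~\ref{realbounds}) make these three atoms comparable, each of length $\asymp|I_{n+1}(c)|\asymp|I_{n}(c)|$, whence $|V|\le C\,|I_{n}(c)|$; for the reverse estimate, at least half of $\mu(V)$ is carried by one of $I_{n+1}(c)$ or $I_{n+2}(c)\cup(\text{long atom})$, and inside such an atom the sub-interval abutting $c$ of the prescribed $\mu$-mass has length $\asymp|I_{n}(c)|$ --- directly when the next partial quotient is bounded, and via the $\sum 1/m^{2}$ estimate of Yoccoz's lemma (Lemma~\ref{yoccozlemma}) otherwise. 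Thus $|V|\asymp|I_{n}(c)|$. The very same reasoning one level down, applied to the long atom $L:=f^{i}(I_{n}(x))$ of $\mathcal P_{n}(x)$ (which contains $V\ni c$, as $i<q_{n+1}$), gives $|L|\asymp|I_{n}(c)|$; so $|V|\asymp|L|$.

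Finally I would transfer this comparison back along $f^{i}$, and this is the delicate step. The intervals $I_{n}(x),f(I_{n}(x)),\dots,f^{i-1}(I_{n}(x))$ are pairwise disjoint atoms of $\mathcal P_{n}(x)$, hence of bounded multiplicity; together with the negative Schwarzian along the orbit and the chain rule~\eqref{CRIchainrule} this bounds the cross-ratio distortion of $f^{i}$ on the nested pair $(\Delta_{k,n},I_{n}(x))$ at every step except the $O(1)$ many --- a three-gap argument shows the backward orbit of $\mathrm{Crit}(f)$ of length $q_{n+1}$ enters the measure-$\mu(I_{n})$ interval $I_{n}(x)$ at most boundedly often, so at most boundedly many atoms of $\mathcal P_{n+1}(x)$ inside $I_{n}(x)$ carry a critical point of $f^{q_{n+1}}$ --- at which $f^{j}(I_{n}(x))$ contains a critical point lying outside $f^{j}(\Delta_{k,n})$. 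By Lemma~\ref{contracts} the ``good'' steps only expand the cross-ratio and the ``bad'' steps expand it further, which merely re-proves the trivial bound $|\Delta_{k,n}|\lesssim|I_{n}(x)|$. To obtain the essential lower bound $|\Delta_{k,n}|\gtrsim|I_{n}(x)|$ I would show that $\Delta_{k,n}$, like every critical atom of $\mathcal P_{n+1}(x)$ inside $I_{n}(x)$, sits within bounded combinatorial distance of an endpoint of $I_{n}(x)$: applying $f^{i}$ to $\Delta_{k,n}$ together with a bounded number of its neighbours and repeating the analysis of the second paragraph shows all these images are comparable to the maximal scale $|I_{n}(c)|$ near $c$; Yoccoz's lemma applied to the almost-parabolic maps $f^{q_{n+1}}$ on the critical-point-free sub-chains of $\{\Delta_{\ell,n}\}$ flanking $\Delta_{k,n}$ then forces $\min\{k+1,\,a_{n+1}-k\}$ to be bounded, and Yoccoz's lemma itself finally yields $|\Delta_{k,n}|\asymp|I_{n}(x)|$.

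The hard part is precisely this last transfer. The clean identity $|\Delta_{k,n}|/|I_{n}(x)|\asymp|V|/|L|$ is unavailable because $f^{i}$ need not be a diffeomorphism on all of $I_{n}(x)$, and the cross-ratio/Schwarzian inequalities point the wrong way for the lower bound; so one is forced to combine the almost-parabolic structure of $f^{q_{n+1}}$ on $I_{n}(x)$ (Yoccoz's lemma) with the fact, pulled back from the real bounds at $c$, that critical atoms cannot sit deep inside $I_{n}(x)$.
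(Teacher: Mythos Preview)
The paper does not prove this lemma directly; it cites \cite[Lemma~4.2]{EdF} and notes that the argument there goes through unchanged when $x$ is not a critical point. Your opening --- push $\Delta_{k,n}$ forward by $f^{i}$ to a genuine critical point $c$ of $f$ and read off $|V|\asymp|L|\asymp|I_n(c)|$ from the real bounds at $c$ --- is the right start and matches \cite{EdF}.

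The transfer-back step, however, has a genuine gap. You assert that every critical spot lies within bounded combinatorial distance of an endpoint of $I_n(x)$, i.e.\ that $\min\{k+1,\,a_{n+1}-k\}$ is bounded, and then finish with Yoccoz's lemma. This is false, and the paper's own use of the present lemma supplies the counterexample: in the proof of Lemma~\ref{crossratiodistortion}, for the bi-critical map $g$, the interval $h(\Delta_{n_i})$ is a critical spot inside $h(I_{n_i})$ with the \emph{same} index $k_{n_i}$, and the Claim there shows $\tfrac{1}{8}<k_{n_i}/a_{n_i+1}<\tfrac{27}{32}$ while $a_{n_i+1}\to\infty$; yet Lemma~\ref{criticalspots} is invoked precisely to obtain $|h(\Delta_{n_i})|\asymp|h(I_{n_i})|$. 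The critical point of $g^{q_{n_i+1}}$ sitting in the middle of the chain destroys the almost-parabolic hypothesis globally, so Yoccoz's $\min(k,a_{n+1}-k)^{-2}$ profile simply does not govern the critical atom. Your subsidiary claim that the images under $f^{i}$ of the neighbours of $\Delta_{k,n}$ are all $\asymp|I_n(c)|$ is also unjustified: your second-paragraph analysis used $c\in V$, and the neighbours $f^{i}(\Delta_{k\pm 1,n})$ need not contain any critical point.

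The transfer in \cite{EdF} never tries to locate $k$. Instead one bounds the cross-ratio distortion of $f^{i}$ on a suitable pair $M\subset T$ whose iterates $T,f(T),\dots,f^{i-1}(T)$ lie inside pairwise disjoint atoms of a dynamical partition (hence have bounded total length), and for which the real bounds at $c$ furnish definite space on both sides of $f^{i}(M)$ inside $f^{i}(T)$. This yields $|\Delta_{k,n}|\asymp|I_n(x)|$ directly, with no information whatsoever about where $k$ sits in $\{0,\dots,a_{n+1}-1\}$.
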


In the statement given in  \cite[Lemma 4.2, page 5600]{EdF}, it is assumed that $x$ is a critical point of $f$, but the
proof given there also works when $x$ is not critical.
An interval such as $\Delta_{k,n}$ appearing in the statement above, containing some critical point of $f^{q_{n+1}}$, is
called a {\it critical spot\/} (at level $n$). Thus, Lemma \ref{criticalspots} is saying that every critical spot is
large, {\it i.e.,} is comparable to the atom of $\mathcal{P}_n(x)$ in which it is contained.

\section{The skew product}\label{Secskew}

In this section we construct a \emph{skew product} (see \S\ref{secfirstskew} below) that will be crucial in order to prove Theorem \ref{propresiduais} (its proof will be given in \S \ref{provaproptrails}) and also to prove Theorem \ref{ThmB} (see Section \ref{StheproofofA}).

\subsection{The fiber maps}\label{SecFibermaps} For any given $\rho\in[0,1]\setminus\Q$ consider the piecewise-affine dynamical system $T_{\rho}:[-1,1]\to[-1,1]$ given by:
\[T_{\rho}(\alpha)=
\begin{dcases}
-\alpha & \mbox{for $\alpha\in\big[-1,0\big]$}\\[1ex]
-\,\frac{\alpha}{\rho\,G(\rho)} & \mbox{for $\alpha\in\big[0,\rho\,G(\rho)\big]$}\\[1ex]
\left\{\frac{1-\alpha}{\rho}\right\} & \mbox{for $\alpha\in\big(\rho\,G(\rho),1\big]$,}\\
\end{dcases}
\]where $G$ is the Gauss map introduced in \S\ref{gaussmap}. Each $T_{\rho}$ is a \emph{Markov} map, its graph is depicted in Figure \ref{markov}. 

\begin{figure}[h!]
\begin{center}~
\hbox to \hsize{\psfrag{0}[][][1]{$0$} \psfrag{1}[][][1]{$\rho$}
\psfrag{1}[][][1]{$1$}
\psfrag{-1}[][][1]{$\!\!\!-1$}
\psfrag{0}[][][1]{$0$}
\psfrag{p}[][][1]{$\hat{\rho}_0$}
\psfrag{2p}[][][1]{$\hat{\rho}_1$}
\psfrag{cd}[][][1]{$\cdots$}
\psfrag{P}[][][1]{$\hat{\rho}_{a_0-1}$}
\psfrag{1p}[][][1]{$1\!\!=\!\!\hat{\rho}_{a_0}$}
\hspace{1.0em} \includegraphics[width=4.5in]{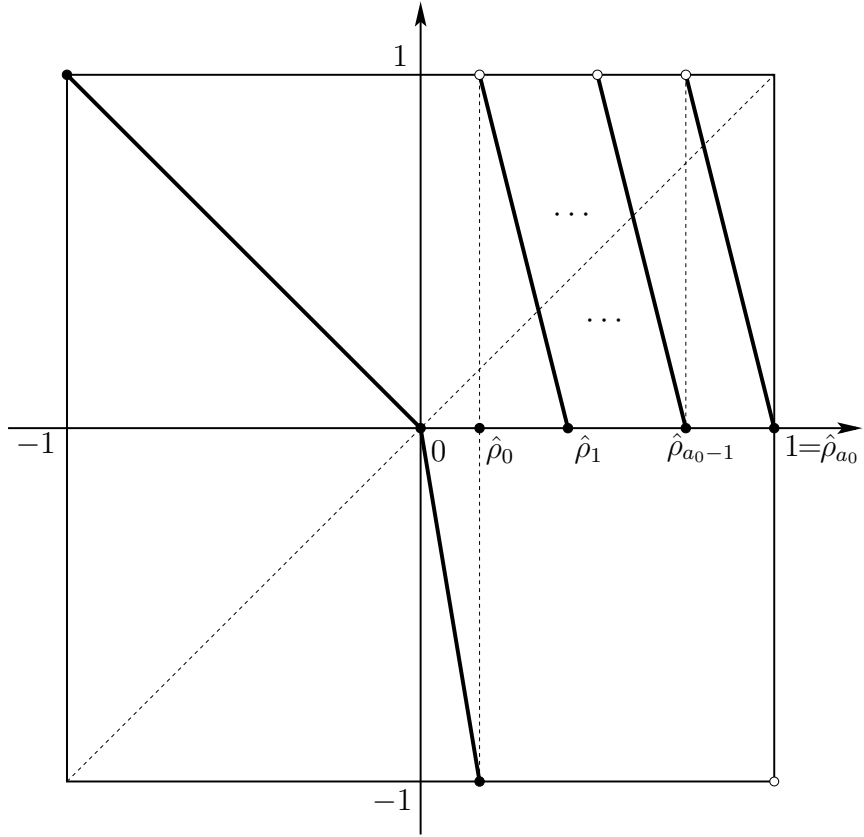}
   }
\end{center}
\caption[markov]{\label{markov} The fiber map $T_\rho$\,; here, $\hat{\rho}_j=(G(\rho)+j)\rho$ for each $0\leq j\leq a_0$, where $a_0=\lfloor\frac{1}{\rho}\rfloor$.}
\end{figure}

\subsection{The skew product}\label{secfirstskew} As before (see \S \ref{sectrails}) we consider the rectangle $R=[0,1]\times[-1,1]$ in $\R^2$, and let $M=\big([0,1]\setminus\Q\big)\times[-1,1] \subset R$. Consider the skew product $T:M \to M$ given by:$$T(\rho,\alpha)=\big(G(\rho)\,,\,T_{\rho}(\alpha)\big)\,,$$where $G$ is the Gauss map introduced in \S \ref{gaussmap}, and where the fiber maps $T_{\rho}$ were introduced in the previous section (\S \ref{SecFibermaps}). The main dynamical property of the skew product $T$ that we will need in this paper is the following.

\begin{prop}\label{genorbdensas} There exists a set $\mathcal{G}_0 \subset [0,1]\times[-1,1]$, which is residual (in the Baire sense) and has full Lebesgue measure, such that any initial condition in $\mathcal{G}_0$ has a positive orbit under $T$ which is dense in\, $[0,1]\times[-1,1]$.
\end{prop}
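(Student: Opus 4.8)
The statement has two halves --- ``full Lebesgue measure'' and ``residual'' --- and although the residual half is the more delicate one, I would reduce it to the measure-theoretic half together with a single continuity issue. First a \emph{reduction}: note that $R=[0,1]\times[-1,1]$ is a compact metric space and that $M=\big([0,1]\setminus\Q\big)\times[-1,1]$ is a $G_\delta$ (hence Polish) subset of $R$ which is moreover \emph{comeager} in $R$, its complement $\big(\Q\cap[0,1]\big)\times[-1,1]$ being a countable union of nowhere-dense fibers; and $M$ is dense in $R$. Consequently it suffices to produce $\mathcal{G}_0\subset M$ which is residual in $M$, has full Lebesgue measure in $M$, and all of whose points have $T$-orbit dense \emph{in $M$}: density in $M$ forces density in $R$, residuality in $M$ transfers to $R$ (as $M$ is comeager), and full measure in $M$ is full measure in $R$ (as $R\setminus M$ is Lebesgue-null).

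\emph{Full measure.} Here I would invoke the ergodicity of $T$ established in Appendix~\ref{apperg}: $T$ preserves a Borel probability measure $\widehat{\nu}$ on $M$ equivalent to two-dimensional Lebesgue measure, and $(T,\widehat{\nu})$ is ergodic. Fix a countable basis $\{V_k\}_{k\in\nt}$ for the topology of $M$ and put $E_k=\bigcup_{n\geq 0}T^{-n}(V_k)$. Then $T^{-1}(E_k)\subset E_k$, so by $T$-invariance $\widehat{\nu}\big(E_k\setminus T^{-1}(E_k)\big)=0$ and $E_k$ agrees up to a null set with a fully invariant set; since $\widehat{\nu}(V_k)>0$, ergodicity gives $\widehat{\nu}(E_k)=1$. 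Hence $\mathcal{G}_0^{\mathrm{m}}:=\bigcap_k E_k$ has full $\widehat{\nu}$-measure, hence full Lebesgue measure, and every $z\in\mathcal{G}_0^{\mathrm{m}}$ has forward $T$-orbit dense in $M$.

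\emph{Residual.} The key point is that $T$, though not globally continuous, is continuous off a thin set: its only discontinuities lie on the curve $D=\big\{(\rho,\rho\,G(\rho)):\rho\in[0,1]\setminus\Q\big\}$, where the second and third branches of $T_{\rho}$ fail to match up, and $D$ is closed and nowhere dense in $M$. Removing the forward orbit of $D$ --- put $M^{(0)}=M\setminus D$ and inductively $M^{(n+1)}=\{z\in M^{(n)}:T(z)\in M^{(n)}\}$ --- one checks, using that $T$ is continuous and open on $M\setminus D$ and that $D$ is closed nowhere dense, that each $M^{(n)}$ is open and comeager in $M$; thus $M^\ast:=\bigcap_n M^{(n)}$ is a Polish space, comeager in $M$, of full $\widehat{\nu}$-measure (by invariance and $\widehat{\nu}(D)=0$), on which $T$ restricts to a \emph{continuous} self-map, and $M^\ast$ has no isolated points. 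By the full-measure part some point of $M^\ast$ has forward orbit dense in $M$, hence in $M^\ast$; and for a continuous self-map of a perfect Polish space the existence of one dense orbit makes the set of points with dense forward orbit a dense $G_\delta$ (each $\bigcup_{n\ge0}(T|_{M^\ast})^{-n}(V_k\cap M^\ast)$ is open and dense). That set is residual in $M$, and intersecting it with $\mathcal{G}_0^{\mathrm{m}}\cap M^\ast$ yields the desired $\mathcal{G}_0$.

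\emph{Main obstacle.} The only genuinely non-formal step is the claim that the discontinuity locus of $T$ is exactly the nowhere-dense curve $D$ and that $T$ is open away from it --- i.e., the bookkeeping needed to see that the interaction between the branch-points $0$ and $\rho\,G(\rho)$ of the piecewise-affine fiber maps $T_{\rho}$ and the singularities of the Gauss map $G$ does no damage. (Equivalently, if one prefers to establish topological transitivity of $T$ directly rather than read it off the measure-theoretic statement, the crux becomes the \emph{topological exactness} alluded to in the Introduction: any small box inside a fiber can be stretched, after finitely many iterates prescribed by a suitable string of partial quotients, completely across some target fiber --- which uses in an essential way the expanding onto branch $\alpha\mapsto\{(1-\alpha)/\rho\}$ of $T_{\rho}$ together with the density of $G$-orbits.) Everything else is soft.
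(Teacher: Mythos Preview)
Your approach is correct in outline but takes a longer route than the paper, and contains one factual slip worth flagging. The paper's argument is unified: fix a countable basis $\{B_j\}$ for $R^+\cup R^-$, set $B_j^\infty=\bigcup_{n\ge 0}T^{-n}(B_j)$, and observe that each $B_j^\infty$ is \emph{open} (because the inverse branches $\tau_m$ of $T$ on the Markov atoms are diffeomorphisms onto $R^{\pm}$, so preimages of open sets are open modulo the null, meager atom-boundaries) and has \emph{full Lebesgue measure} (by Lemma~\ref{mass2}, which follows from the bounded Jacobian distortion of Proposition~\ref{boundjacobdist}). A full-measure open set is automatically dense, so each $B_j^\infty$ is open and dense, and $\mathcal{G}_0=\bigcap_j B_j^\infty$ is at once a dense $G_\delta$ and a full-measure set. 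You instead separate the halves: for full measure you invoke the ergodicity of $(T,\mu_T)$ from Theorem~\ref{skewprodthm}, which is a heavier input than Lemma~\ref{mass2} but certainly works; for residuality you excise the discontinuity locus, pass to a comeager Polish subspace $M^\ast$ on which $T$ is continuous, and then use the standard fact that one dense forward orbit of a continuous map on a perfect Polish space forces a residual set of them. This is sound but more elaborate than the paper's shortcut.

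The slip: the discontinuity locus of $T$ on $M$ is not a single curve. The third branch $\alpha\mapsto\{(1-\alpha)/\rho\}$ of $T_\rho$ is itself piecewise affine with $a_0=\lfloor 1/\rho\rfloor$ pieces, so $T_\rho$ jumps at every $\hat\rho_j=(G(\rho)+j)\rho$ for $0\le j\le a_0-1$, not just at $\hat\rho_0=\rho\,G(\rho)$ (cf.\ Figure~\ref{markov}). The true discontinuity set is exactly the union of the boundaries of the Markov atoms $V_{k,\ell},U_k,R_k$ described in Appendix~\ref{apperg}, a countable union of segments. This set is still closed in $M$, nowhere dense (finitely many segments in each vertical strip $\frac{1}{k+1}<\rho<\frac{1}{k}$), and Lebesgue-null, so your inductive construction of $M^\ast$ goes through unchanged after the correction. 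Note, though, that once one identifies the discontinuity locus with the Markov-atom boundaries, the paper's shortcut becomes visible: the Markov tiles already furnish the openness of the sets $T^{-n}(B_j)$ up to a meager null set, and the passage through $M^\ast$ can be avoided entirely.
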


The set $\mathcal{G}_0$ given by Proposition \ref{genorbdensas} will be crucial in the proof of Theorem \ref{propresiduais} (which will be given in Section \ref{provaproptrails} below), and also in the proof of Theorem \ref{ThmB} (see Section \ref{StheproofofA}). In Section \ref{S:real} we will also need the following fact.

\begin{lemma}[Topologically Exactness]\label{Tcov} Let $U$ be a subset of the rectangle $R$ with non-empty interior. Then there exists $n\in\nt$ such that $T^n(U \cap M)=M$.
\end{lemma}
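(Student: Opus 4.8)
The plan is to exploit the Markov structure of the fiber maps $T_\rho$ together with the well-known topological exactness (``expansiveness on cylinders'') of the Gauss map $G$. First I would recall that $G$ is topologically exact on $[0,1]\setminus\Q$: for any interval $J\subset[0,1]$ there is $m$ with $G^m(J\cap(\,[0,1]\setminus\Q))=[0,1]\setminus\Q$. This follows because the inverse branches of $G$ are uniform contractions (bounded distortion), so images of $J$ under $G^m$ eventually cover a full cylinder $[a_0,a_1,\dots,a_{m-1}]$, and one further iterate maps a full cylinder onto all of $(0,1)\setminus\Q$. Hence, given $U$ with non-empty interior, after projecting to the first coordinate and applying $G$ a suitable number of times we may assume that $U$ already contains a ``full vertical strip'' $\{\rho\}\times K_\rho$ over a positive-measure (indeed, after shrinking, over a full-cylinder) set of $\rho$'s, where $K_\rho$ is an interval of definite relative length.

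The heart of the matter is then the fiber dynamics. I would show: there is a uniform $N_0$ such that for \emph{every} $\rho$ and \emph{every} subinterval $K\subset[-1,1]$ with $|K|\geq\delta$, one has $T^{N}_{G^{N-1}(\rho)}\circ\cdots\circ T_\rho(K)=[-1,1]$ for some $N\leq N_0(\delta)$. To see this, observe that each $T_\rho$ is piecewise affine and expanding on each of its (finitely many, but possibly many) branches except on the single branch $[-1,0]$ where it acts as $\alpha\mapsto-\alpha$ — a reflection. On the branch $[0,\rho G(\rho)]$ the slope is $1/(\rho G(\rho))>1$ uniformly bounded below away from $1$ only when $\rho$ is not too small; on the branches inside $(\rho G(\rho),1]$ the map is $\alpha\mapsto\{(1-\alpha)/\rho\}$, again affine with slope $1/\rho$. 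The key point is that the Markov partition is such that $0$ is a common endpoint and the images of the sub-branches cover $[0,1]$ (the branch $[0,\rho G(\rho)]$ maps \emph{onto} $[-1,0]$, and each branch of the third piece maps onto $[0,1]$, with the last possibly partial). Consequently an interval that meets $[0,1]$ with definite length will, after finitely many steps (the number controlled only by $\delta$ and by how the Gauss orbit of $\rho$ visits the various cylinders — but a \emph{bounded} number because any interval of length $\geq\delta$ already straddles one of the full branches of $T_\rho$ once $\rho$ lies in a bounded-type-ish cylinder, and we arranged via the first paragraph that we may pass to such $\rho$), be expanded to cover $[0,\rho' G(\rho')]$ at some stage, and the next application of the first (reflection) branch together with the expanding branch sends this onto all of $[-1,1]$. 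I would organize this as: (i) any interval of definite length either already covers a full branch of $T_\rho$, or its $T_\rho$-image has strictly larger length; (ii) iterating, within a bounded number of steps we cover a full branch; (iii) one more step covers $[0,\rho'G(\rho')]$, and (iv) one final step covers $[-1,1]$.

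Putting the two ingredients together: start with $U$ of non-empty interior, choose $(\rho_*,\alpha_*)$ and a product neighbourhood $J\times K\subset U$ with $|K|=\delta>0$. Apply the fiber argument to get $n_1$ with $T^{n_1}(J\times K)\supset \{G^{n_1}(\rho):\rho\in J\}\times[-1,1]$ — here one must be a little careful that the fiber argument works simultaneously for all $\rho\in J$, which it does because the bound $N_0(\delta)$ is uniform in $\rho$ once we have localized $\rho$ to a suitable cylinder; if $J$ is not yet inside such a cylinder, first apply the Gauss-exactness step to replace $J$ by $G^{m}(J)$ which is a full cylinder, tracking the fibers along. Then apply the Gauss topological exactness in the base: there is $n_2$ with $G^{n_2}\big(\{G^{n_1}(\rho):\rho\in J\}\cap([0,1]\setminus\Q)\big)=[0,1]\setminus\Q$, and since the fibers over this set are already the full $[-1,1]$ and $T$ acts on fibers as surjections of $[-1,1]$ only after we know they are full, we conclude $T^{n_1+n_2}(U\cap M)\supseteq \big([0,1]\setminus\Q\big)\times[-1,1]=M$, hence equals $M$. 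Set $n=n_1+n_2$ (plus the initial $m$ if needed).

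The main obstacle I anticipate is the uniformity of the fiber-expansion bound $N_0(\delta)$ across all $\rho$. When $\rho$ is very small (equivalently $a_0=\lfloor 1/\rho\rfloor$ is large), the branch $[0,\rho G(\rho)]$ is tiny and the third piece has very many sub-branches, so naively an interval could need many iterates to hit a full branch; however, the slopes there are $1/\rho$ which is \emph{large}, so a single application already expands by a huge factor, compensating. The clean way to handle this is to prove the dichotomy in (i) quantitatively — either $K$ already contains one of the finitely many branch-intervals of $T_\rho$ whose image is all of $[0,1]$ (which must happen once $|K|$ exceeds the largest gap, and the gaps shrink geometrically), or $|T_\rho(K)\cap[0,1]|\geq c\,|K|$ with $c>1$ independent of $\rho$ — and then a standard ``definite expansion until you cover a branch'' argument closes it in $\leq\lceil\log(1/\delta)/\log c\rceil+O(1)$ steps, uniformly. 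I would spell this dichotomy out as the one genuine lemma inside the proof and treat everything else as routine bookkeeping.
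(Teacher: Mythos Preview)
Your approach works in outline but is considerably more involved than the paper's. The paper dispatches this lemma in a single sentence in the appendix, as an immediate consequence of the two-dimensional Markov partition it builds for $T$. The atoms of that partition are trapezoids $V_{k,\ell}$, triangles $U_k$, and rectangles $R_k$, each mapped by $T$ diffeomorphically onto one of the open half-rectangles $R^{\pm}$; since $T^2$ is uniformly expanding, the Markov $n$-tiles shrink to diameter zero, so any open $U$ contains the closure of some $n$-tile $W$, whence $T^n(W)$ is a full closed half-rectangle and one or two further iterates cover $M$. No separate base/fiber analysis is needed.

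Your base-then-fiber route can be completed, but two points need repair. First, your single-step dichotomy fails as stated: the branches of $T_\rho$ on $(\rho G(\rho),1]$ have slope $1/\rho$, which is arbitrarily close to $1$ when $\rho$ is close to $1$ (not when $\rho$ is small, which is the case you worry about --- there the slopes are large and there is no issue). The fix is to work with $T^2$; the paper notes explicitly that the composition of any two fiber maps is expanding, since $\rho$ near $1$ forces $G(\rho)$ near $0$, hence large slopes at the next step. Second, when $K$ straddles a branch endpoint of $T_\rho$, the image $T_\rho(K)$ is a union of two disjoint intervals, not an interval, so your inductive step must either follow the longer piece or switch to a measure argument; you do neither explicitly. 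Both gaps are routine to patch, but the two-dimensional Markov route bypasses them entirely --- and since the paper needs the Markov tiles anyway for the ergodicity proof, using them here costs nothing extra.
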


We postpone the proofs of Proposition \ref{genorbdensas} and Lemma \ref{Tcov} until Appendix \ref{apperg}.

\section{Proof of Theorem \ref{propresiduais}}\label{provaproptrails}

Just as in Section \ref{sechomeos}, let $f:S^1 \to S^1$ be an orientation preserving circle homeomorphism with irrational rotation number $\rho$. With Proposition \ref{genorbdensas} at hand, Theorem \ref{propresiduais} will be a straightforward consequence of the following fact:

\begin{lemma}\label{leminhacomb} Given $x$ and $y$ in $S^1$ we have:$$(\rho_{n},\alpha_{n})=T^n(\rho_0,\alpha_0)\quad\mbox{for all $n\in\nt$,}$$where $\{(\rho_n,\alpha_n)\}$ is the renormalization trail of $y$ with respect to $x$ and $f$, as defined in Section \ref{sectrails}, and $T:M \to M$ is the skew product constructed in Section \ref{secfirstskew}.
\end{lemma}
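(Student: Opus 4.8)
The plan is to prove the identity $(\rho_n,\alpha_n)=T^n(\rho_0,\alpha_0)$ by induction on $n\in\nt$. The base case is essentially definitional: the construction in Section \ref{sectrails} sets $\rho_0=\rho$ and $\rho_1=G(\rho)$, and a direct comparison of the two cases in the definition of $\alpha_1$ (according to whether $y\in I_1$ or $y\in I_0$, i.e. whether $\alpha_0\in[0,\rho\,G(\rho)]$ corresponds to the ``short'' branch or $\alpha_0\in(\rho\,G(\rho),1]$ to the ``long'' branch) with the three branches of $T_\rho$ should yield $(\rho_1,\alpha_1)=(G(\rho),T_\rho(\alpha_0))=T(\rho_0,\alpha_0)$. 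Here one uses that, by Lemma \ref{formmuIn}, $\mu(I_1)=\rho\,G(\rho)=\rho_0\,G(\rho_0)$, so the normalizing denominators in the definition of $\alpha_1$ match exactly the coefficients appearing in $T_\rho$.

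For the inductive step, the key observation is that the renormalization trail is itself built by a ``renormalization'' procedure: the point $y_n$ (the $n$-th renormalization ancestor of $y$) plays, relative to $x$ and the first-return dynamics, the same role that $y$ plays relative to $x$ and $f$ one level up. Concretely, assuming $(\rho_n,\alpha_n)=T^n(\rho_0,\alpha_0)$, I would unwind the definition of $\alpha_{n+1}$ and $\alpha_{n+2}$ in terms of $\mu$-measures of arcs determined by $x$, $y_n$, $y_{n+1}$ inside $I_n$ and $I_{n+1}$, and compare this with the three-branch formula for $T_{\rho_{n+1}}$ applied to $\alpha_{n+1}$. The three cases of $T_{\rho_{n+1}}$ correspond precisely to the trichotomy in the construction: $y_{n+1}$ landing in the short interval $I_{n+2}$ on the ``negative'' side of $x$ (branch $-\alpha$), $y_{n+1}$ already lying in a short-interval copy at level $n+1$ but on the other side (branch $-\alpha/(\rho_{n+1}G(\rho_{n+1}))$), or $y_{n+1}$ lying in a long-interval copy $f^i(I_{n+1})$ with $i$ in the appropriate range, where the integer part $\lfloor\,\cdot\,/\rho_{n+1}\rfloor=\lfloor 1/\rho_{n+1}\rfloor=a_{n+1}$ records which copy (i.e. the value of $i$), and the fractional part $\{(1-\alpha)/\rho_{n+1}\}$ gives the renormalized position within $I_{n+2}$. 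The bookkeeping is carried by the two identities from Lemma \ref{formmuIn}: $\mu(I_{n+1})=G^{n+1}(\rho)\,\mu(I_n)$ and, one level further, $\mu(I_{n+2})=G^{n+2}(\rho)\,\mu(I_{n+1})$, which translate the $\mu$-measure ratios of arcs into the arithmetic (shift and renormalize by $\rho_{n+1}$, resp. $\rho_{n+1}G(\rho_{n+1})$) performed by $T_{\rho_{n+1}}$, so that $\alpha_{n+2}=T_{\rho_{n+1}}(\alpha_{n+1})$ and hence $(\rho_{n+2},\alpha_{n+2})=T(\rho_{n+1},\alpha_{n+1})=T^{n+2}(\rho_0,\alpha_0)$.

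The main obstacle I anticipate is purely combinatorial/notational: one must carefully match the sign conventions (the ``counterclockwise'' measurement noted in Section \ref{sectrails}), the division of $I_n$ into the $a_{n+1}$ long copies of $I_{n+1}$ plus the short interval $I_{n+2}$, and the corresponding partition $[-1,0]\cup[0,\rho_{n+1}G(\rho_{n+1})]\cup(\rho_{n+1}G(\rho_{n+1}),1]$ of the fiber $[-1,1]$, keeping track of exactly which subinterval of $I_n$ the ancestor $y_{n+1}$ falls into and how its position is rescaled. Once the correspondence between these two trichotomies is set up correctly, each branch becomes a one-line computation with the measure identities from Lemma \ref{formmuIn}; there is no analytic difficulty. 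Combining Lemma \ref{leminhacomb} with Proposition \ref{genorbdensas}, Theorem \ref{propresiduais} then follows by taking $\bm{R}$ to be the set of irrational rotation numbers $\rho$ such that $\{\rho\}\times[-1,1]$ meets the residual, full-measure set $\mathcal{G}_0$ in a fiber-residual, fiber-full-measure set (which holds for Lebesgue-a.e.\ $\rho$ by Fubini together with the product structure, after intersecting with the irrationals), and $\mathcal{B}_x$ to be the preimage under $y\mapsto(\rho_0,\alpha_0)$ of $\mathcal{G}_0$ restricted to that fiber.
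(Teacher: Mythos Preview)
Your approach matches the paper's exactly: induction on $n$, with the step split into the three cases $y_n\in I_{n+1}$, $y_n\in I_{n+2}$, and $y_n\in I_n\setminus I_{n+2}$, corresponding respectively to the three branches of $T_{\rho_{n+1}}$, and Lemma~\ref{formmuIn} converting the $\mu$-ratios into the required arithmetic. One correction to your sketch of the third case: the integer part $\lfloor(1-\alpha_{n+1})/\rho_{n+1}\rfloor$ is not the constant $a_{n+1}=\lfloor 1/\rho_{n+1}\rfloor$ but the \emph{variable} index $\ell_n\in\{0,\ldots,a_{n+1}-1\}$ of the fundamental domain $f^{\ell_n q_{n+1}+q_n}(I_{n+1})\subset I_n$ containing $y_n$, and the resulting fractional part gives the position of $y_{n+1}$ inside $I_{n+1}$ (the long interval at level $n+1$), not inside $I_{n+2}$.
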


During the proof of Lemma \ref{leminhacomb}, we will make repeated use of the well-known formula\, $\mu(I_n)=\prod_{j=0}^{j=n}G^j(\rho)$ (see Lemma \ref{formmuIn} in Section \ref{gaussmap}).

\begin{proof}[Proof of Lemma \ref{leminhacomb}] By our definition of renormalization trails, $\rho_n=G^n(\rho)$ for all $n\in\nt$, which coincides with the definition of the skew product $T$. So we only need to deal with the second coordinate of the trails. Let us treat first the cases $n=0$ and $n=1$: if $y$ belongs to the short interval $I_1=\big(f^{a_0}(x),x\big]$ (see Section \ref{subseccombpart}), we have $\alpha_0\in\big[0,\rho_0\,G(\rho_0)\big]$ and then:$$T_{\rho_0}(\alpha_0)=T_{\rho_0}\big(\mu((x,y))\big)=-\,\frac{\mu((x,y))}{\rho_0\,G(\rho_0)}=-\,\frac{\mu\big((x,y)\big)}{\mu(I_{1})}=\alpha_1\,.
$$

Otherwise, there exist $y_0$ in the long interval $I_0=\big(x,f(x)\big]$ and $i\in\{0,1,...,a_0-1\}$ such that $f^i(y_0)=y$, in which case we have $\alpha_0\in\big[\rho_0\,G(\rho_0),1\big]$ and then:
\begin{align*}
T_{\rho_0}(\alpha_0)&=T_{\rho_0}\big(1-\mu((x,y_0))-i\,\rho_0\big)=\left\{\frac{\mu((x,y_0))+i\,\rho_0}{\rho_0}\right\}=\left\{\frac{\mu((x,y_0))}{\rho_0}\right\}=\\
&=\frac{\mu((x,y_0))}{\rho_0}=\frac{\mu((x,y_0))}{\mu(I_0)}=\alpha_1\,.
\end{align*}

In any case, $\alpha_1=T_{\rho_0}(\alpha_0)$ and then $(\rho_1,\alpha_1)=T(\rho_0,\alpha_0)$, as desired. Therefore, in order to prove Lemma \ref{leminhacomb} we have, for each $n\in\nt$, three possible cases to consider:

\begin{enumerate}

\item If $y_n \in I_{n+2}$\,, we have:
$$0\leq\alpha_{n+1}=\frac{\mu((x,y_n))}{\mu(I_n)}\leq\frac{\mu(I_{n+2})}{\mu(I_n)}=\rho_{n+1}\,G(\rho_{n+1})\,,$$and then:$$T_{\rho_{n+1}}(\alpha_{n+1})=-\,\frac{\alpha_{n+1}}{\rho_{n+1}\,G(\rho_{n+1})}=-\,\frac{\alpha_{n+1}\,\mu(I_n)}{\mu(I_{n+2})}=-\,\frac{\mu((x,y_n))}{\mu(I_{n+2})}=\alpha_{n+2}\,.$$

\item If $y_n \in I_n \setminus I_{n+2}$\,, we have:$$\frac{\mu(I_{n+2})}{\mu(I_{n})}<\alpha_{n+1}\leq 1\,,$$which implies $\alpha_{n+1}\in\big(\rho_{n+1}\,G(\rho_{n+1}),1\big]$, and then $\displaystyle T_{\rho_{n+1}}(\alpha_{n+1})=\left\{\frac{1-\alpha_{n+1}}{\rho_{n+1}}\right\}$.
Consider the fundamental domains $\Delta_{j,n}\subset I_n$ for $f^{q_{n+1}}$ given by$$\Delta_{j,n}=f^{j\,q_{n+1}+q_n}(I_{n+1})=
\big(f^{(j+1)\,q_{n+1}+q_n}(x),f^{j\,q_{n+1}+q_n}(x)\big]$$for $j\in\{0,1,...,a_{n+1}-1\}$, and let $\ell_n\in\{0,1,...,a_{n+1}-1\}$ be defined by $y_n\in \Delta_{\ell_n,n}$.
We claim that $\displaystyle\ell_n=\left\lfloor\frac{1-\alpha_{n+1}}{\rho_{n+1}}\right\rfloor$. Indeed, since $\mu\big(\Delta_{j,n}\big)=\mu(I_{n+1})$ for all $j\in\{0,1,...,a_{n+1}-1\}$, we get that:
$$\ell_n\,\mu(I_{n+1})\leq(1-\alpha_{n+1})\,\mu(I_{n})<(\ell_n+1)\,\mu(I_{n+1}).$$
Equivalently
$$\ell_n\leq(1-\alpha_{n+1})\,\frac{\mu(I_n)}{\mu(I_{n+1})}<\ell_n+1\,.$$
Finally, from$$\frac{\mu(I_n)}{\mu(I_{n+1})}=\frac{\prod_{j=0}^{j=n}G^j(\rho)}{\prod_{j=0}^{n+1}G^j(\rho)}=\frac{1}{G^{n+1}(\rho)}=\frac{1}{\rho_{n+1}}\,,$$we deduce that $\displaystyle\ell_n\leq\frac{1-\alpha_{n+1}}{\rho_{n+1}}<\ell_n+1$\,, which implies the claim. With this at hand we obtain:
\begin{align*}
T_{\rho_{n+1}}(\alpha_{n+1})&=\left\{\frac{1-\alpha_{n+1}}{\rho_{n+1}}\right\}=\frac{1-\alpha_{n+1}}{\rho_{n+1}}-\,\ell_n=\frac{\mu(I_n)-\alpha_{n+1}\,\mu(I_n)}{\mu(I_{n+1})}-\,\ell_n\\
&=\frac{\mu(I_n)-\big[\mu((x,y_n))+\ell_n\,\mu(I_{n+1})\big]}{\mu(I_{n+1})}=\alpha_{n+2}\,.
\end{align*}

\item Whenever $y_n$ belongs to the \emph{short} interval $I_{n+1}$, we have $\alpha_{n+1}\in[-1,0)$ and then $T_{\rho_{n+1}}(\alpha_{n+1})=-\alpha_{n+1}=\alpha_{n+2}$\,, since $y_{n+1}=y_n$ belongs now to the \emph{long} interval $I_{n+1}$.
\end{enumerate}
\end{proof}

\begin{proof}[Proof of Theorem \ref{propresiduais}] Let $\mathcal{G}_0 \subset R$ be given by Proposition \ref{genorbdensas}. By Fubini's theorem, there exists a full Lebesgue measure set $\bm{R}\subset[0,1]$ such that for each $\rho\in\bm{R}$, the set $\bm{R}_{\rho}=\big\{\alpha\in[-1,1]:(\rho,\alpha)\in\mathcal{G}_0\big\}$ has full Lebesgue measure in $[-1,1]$. In particular, $\bm{R}_{\rho}$ is also residual\footnote{Indeed, let $\{A_n\}$ be a sequence of open and dense sets in $R$ such that $\cap A_n=\mathcal{G}_0$. For each $\rho\in\bm{R}$ and each $n$ we have that $\big(\{\rho\}\times[-1,1]\big)\cap A_n$ is open and has full Lebesgue measure in $\{\rho\}\times[-1,1]$, and in particular it is also dense in $\{\rho\}\times[-1,1]$.} in $[-1,1]$ for all $\rho\in\bm{R}$. Given a minimal circle homeomorphism $f$ with $\rho(f)\in\bm{R}$ and given any point $x \in S^1$, the map that sends $\alpha\in(0,1)$ to the point $y \in S^1\setminus\{x\}$ which satisfies $\mu_f\big([x,y]\big)=\alpha$ (and note that such point is unique if we fix, say, the counterclockwise orientation) is a homeomorphism that, by definition, identifies the Lebesgue measure in $(0,1)$ with the probability measure $\mu_{f}$ in $S^1\setminus\{x\}$. By combining Proposition \ref{genorbdensas} with Lemma \ref{leminhacomb}, we deduce that it is enough to take $\mathcal{B}_{x}$ as the image (under the homeomorphism described above) of $\bm{R}_{\rho}\cap(0,1)$.
\end{proof}

\section{Even-type rotation numbers} \label{sec:eventype}

Let us now present a result concerning trails for maps whose rotation number belongs to the special class appearing in the statements of Theorem \ref{ThmB} and Theorem \ref{ThmC}. We denote by $\mathbb{E}$ the set of those irrationals $0<\theta<1$ all of whose partial quotients $a_n(\theta)$ are {\it even\/} (in particular $a_n(\theta)\geq 2$ for all $n$). We also consider the subset $\mathbb{E}_\infty=\{\theta\in \mathbb{E}\,:\, \lim_{n\to\infty} a_n(\theta)=\infty\}$.

\begin{remark} We note {\it en-passant\/} that $\mathbb{E}_\infty$ contains some Diophantine numbers: for example, the number $\theta=[a_1,a_2,\ldots,a_n,\ldots]$ with $a_n=2^n$ is Diophantine, and it clearly belongs to $\mathbb{E}_\infty$. The set $\mathbb{E}_{\infty}$ also contains many Liouville numbers: for instance, any $\theta=[a_1,a_2,\ldots,a_n,\ldots]$ with $a_n$ even and $a_n > e^{n^n}$ for all $n\in\nt$ belongs to $\mathbb{E}_\infty$. Finally, note that the transcendental number $\lambda=(e-1)/(e+1)$ also belongs to $\mathbb{E}_\infty$; indeed, its continued fraction expansion has $a_n=4n-2$ for all $n\geq 1$, {\it i.e.,}
$\lambda=[2,6,10,14,\ldots]$ -- this is a special case of an old identity due to Euler and Lambert{\footnote{Which states that $\tanh{(x^{-1})}=[x,3x,5x,7x,\ldots]$ for all $x\in \mathbb{N}$; see \cite[p.~71]{Lang}}}.
\end{remark}

\begin{prop}\label{evennumbers} Let $f: S^1\to S^1$ be a minimal circle homeomorphism with $\rho(f)=\rho$. Given $x,y\in S^1$ distinct, let $\{(\rho_n,\alpha_n)\}_{n\geq 0}$
 be the renormalization trail of $y$ with respect to $x$ and $f$. If $\rho\in \mathbb{E}$ and $\alpha_0=\frac{1}{2}$, then for all $n\geq 1$ we have
 $\rho_n<\frac{1}{2}$, and
 \begin{equation}\label{evenodd}
  \alpha_n \;=\; \left\{\begin{array}{ll}
          {\displaystyle{\frac{\rho_n}{2}}} & \mbox{if $n$ is odd,} \\
          {}&{}\\
          {\displaystyle{\frac{1}{2} + \rho_n}} & \mbox{if $n$ is even.}
         \end{array}\right.
 \end{equation}
 In particular, if $\rho\in \mathbb{E}_\infty$, then there exists a subsequence $n_i\to\infty$ such that $\alpha_{n_i}\to \frac{1}{2}$.
\end{prop}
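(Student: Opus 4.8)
The plan is to prove the formula \eqref{evenodd} by induction on $n$, using Lemma \ref{leminhacomb} to reduce everything to iterating the skew product $T$, i.e.\ the fiber maps $T_{\rho_n}$ acting on the second coordinate. The key point to keep in mind is that when $\rho\in\mathbb{E}$ all partial quotients are even, so $a_n=\lfloor 1/\rho_n\rfloor$ is even and $\rho_n=G^n(\rho)$; since $a_n\ge 2$ we have $\rho_n<\tfrac12$ for every $n\ge 1$, which takes care of the first assertion and, more importantly, tells us that the breakpoint $\rho_n\,G(\rho_n)=\rho_n\rho_{n+1}$ of the fiber map $T_{\rho_n}$ is less than $\tfrac14$, so the values of $\alpha_n$ we are tracking always land in the third branch of $T_{\rho_n}$ (the one given by $\alpha\mapsto\{(1-\alpha)/\rho_n\}$) when $\alpha_n\ge\tfrac12$, and in the first branch ($\alpha\mapsto-\alpha$) is never used here because our $\alpha_n$'s stay nonnegative.

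First I would check the base step. With $\alpha_0=\tfrac12$ and $\rho_0=\rho<\tfrac12$ (as $a_0\ge 2$), we have $\alpha_0>\rho_0\rho_1$, so $T_{\rho_0}(\alpha_0)=\{(1-\tfrac12)/\rho_0\}=\{1/(2\rho_0)\}$. Writing $1/\rho_0=a_0+\rho_1$ with $a_0$ even, $1/(2\rho_0)=a_0/2+\rho_1/2$, and since $a_0/2$ is an integer and $0<\rho_1/2<\tfrac14<1$, the fractional part is exactly $\rho_1/2$. Hence $\alpha_1=\rho_1/2$, matching the odd case. Then for the inductive step I split into the two parities exactly as the formula dictates. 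If $n$ is odd and $\alpha_n=\rho_n/2$: since $\rho_n<\tfrac12$ we get $\rho_n/2<\tfrac14$; I must determine which branch of $T_{\rho_n}$ contains $\rho_n/2$. We have $\rho_n\rho_{n+1}<\rho_n/2$ iff $\rho_{n+1}<\tfrac12$, which is true, so $\rho_n/2$ lies in the third branch, giving $T_{\rho_n}(\rho_n/2)=\{(1-\rho_n/2)/\rho_n\}=\{1/\rho_n-\tfrac12\}=\{a_n+\rho_{n+1}-\tfrac12\}$. Since $a_n$ is even, $a_n-\tfrac12$ has fractional part $\tfrac12$, and adding $\rho_{n+1}\in(0,\tfrac12)$ keeps us below $1$, so the fractional part is $\tfrac12+\rho_{n+1}$; thus $\alpha_{n+1}=\tfrac12+\rho_{n+1}$, which is the even case for index $n+1$. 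If $n$ is even and $\alpha_n=\tfrac12+\rho_n$: here $\alpha_n>\rho_n\rho_{n+1}$ (it's bigger than $\tfrac12$), so again the third branch applies, and $T_{\rho_n}(\alpha_n)=\{(1-\tfrac12-\rho_n)/\rho_n\}=\{1/(2\rho_n)-1\}=\{a_n/2+\rho_{n+1}/2-1\}=\rho_{n+1}/2$ since $a_n/2-1\in\Z$ and $0<\rho_{n+1}/2<\tfrac14$. Hence $\alpha_{n+1}=\rho_{n+1}/2$, the odd case for index $n+1$. This closes the induction.

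For the final sentence, if $\rho\in\mathbb{E}_\infty$ then $a_n\to\infty$, hence $\rho_n=G^n(\rho)\to 0$ (since $\rho_n<1/a_n$); taking the subsequence of even indices $n_i$ and using $\alpha_{n_i}=\tfrac12+\rho_{n_i}$, we get $\alpha_{n_i}\to\tfrac12$, as claimed.

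I expect the only genuinely delicate point to be the careful bookkeeping of fractional parts — keeping straight that $a_n$ even makes $a_n-\tfrac12$ and $a_n/2-1$ behave correctly modulo $1$, and that every relevant $\alpha_n$ stays strictly between $0$ and $1$ so that none of the degenerate boundary cases of the piecewise definition of $T_{\rho_n}$ intervene. Everything else is a direct unwinding of Lemma \ref{leminhacomb} together with the defining formula for the fiber maps $T_\rho$ from \S\ref{SecFibermaps}.
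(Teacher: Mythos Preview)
Your proof is correct and follows essentially the same route as the paper's: both argue by induction via Lemma~\ref{leminhacomb}, checking at each step that $\alpha_n$ falls in the third branch of $T_{\rho_n}$ and then reducing the fractional-part computation using $1/\rho_n=a_n+\rho_{n+1}$. The only cosmetic differences are that the paper also verifies $\alpha_2$ explicitly as part of the base (which your induction step already covers), and that in your odd-$n$ step the phrase ``since $a_n$ is even'' is harmless but unnecessary---the identity $\{a_n+\rho_{n+1}-\tfrac12\}=\tfrac12+\rho_{n+1}$ holds for any integer $a_n$; evenness is genuinely needed only in the even-$n$ step, where you correctly use that $a_n/2\in\mathbb{Z}$.
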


\begin{proof}[Proof of Proposition \ref{evennumbers}] First note that, if $a_0,a_1,a_2,\ldots$ are the partial quotients of the continued fraction expansion of $\rho_0$,
 then by hypothesis $a_n\geq 2$ for all $n$, and this already implies that $\rho_n<\frac{1}{a_{n}}\leq\frac{1}{2}$ for all $n\geq 1$.
 This takes care of the first assertion in the statement. In order to prove the second assertion, we will use Lemma \ref{leminhacomb}
 and induction on $n$.

{\noindent\it (1) Base of induction.\/} We have $\alpha_0=\frac{1}{2}$, and since $\alpha_0>\rho_0G(\rho_0)=\rho_0\rho_1$, Lemma \ref{leminhacomb} tells us that
\[
\alpha_1=T_{\rho_0}(\alpha_0) = \left\{\frac{1-\alpha_0}{\rho_0}\right\}
= \left\{\frac{1}{2\rho_0}\right\}\ .
\]
But $\rho_0^{-1}=a_0+\rho_1$, where $a_0\geq 2$ is {\it even\/}. Therefore
\[
 \alpha_1\;=\; \left\{\frac{1}{2}(a_0+\rho_1)\right\}\;=\;  
 \frac{\rho_1}{2}\ .
\]
This verifies \eqref{evenodd} for $n=1$. 
Let us now look at $\alpha_2$. 
We have $\alpha_1>\rho_1G(\rho_1)=\rho_1\rho_2$. Hence, using Lemma \ref{leminhacomb} and the fact that 
$\rho_1^{-1}=a_1+\rho_2$, we see that
\begin{align*}
 \alpha_2\;=\; T_{\rho_1}(\alpha_1)\;&=\; \left\{\frac{1-\alpha_1}{\rho_1}\right\} \\
  &=\; \left\{\frac{1}{\rho_1}-\frac{1}{2}\right\}\\
  &=\; \left\{a_1+\rho_2-\frac{1}{2}\right\}\\
  &=\; \left\{\rho_2-\frac{1}{2}\right\}\\
  &=\; \frac{1}{2} + \rho_2\ .
\end{align*}
This verifies \eqref{evenodd} for $n=2$. Summarizing, we have established the base of the induction.
\medskip

{\noindent\it (2) Induction step.\/} Suppose \eqref{evenodd} holds true for $n$.
  In order to show that this assertion holds true for $n+1$, there are two
  cases to consider, according to whether $n$ is odd or even.
  \begin{enumerate}
  \item[(i)] If $n$ is odd, then we are assuming that $\alpha_n=\frac{1}{2}\rho_n$.
  In particular, we have $\alpha_n>\rho_n\rho_{n+1}=\rho_nG(\rho_n)$, so 
  Lemma \ref{leminhacomb} tells us that
  \begin{align*}
   \alpha_{n+1}\;=\;T_{\rho_n}(\alpha_n)\;&=\; \left\{\frac{1-\alpha_n}{\rho_n}\right\} \\
   &=\; \left\{\frac{1}{\rho_n}- \frac{1}{2}\right\} 
  \end{align*}
  Using here that $\rho_n^{-1}=a_{n}+\rho_{n+1}$, we get
  \[
   \alpha_{n+1}=\left\{a_{n}+\rho_{n+1}-\frac{1}{2}\right\} = \frac{1}{2}+\rho_{n+1}\ .
  \]
  This establishes the induction step when $n$ is odd.

  \item[(ii)] If $n$ is even, then we are assuming that $\alpha_n = \frac{1}{2}+ \rho_n$, by the induction hypothesis.
  Hence we have $\alpha_n >\frac{1}{2} > \rho_n\rho_{n+1} = \rho_nG(\rho_n)$, and therefore from Lemma \ref{leminhacomb} we deduce that
  \begin{align}\label{oddalpha}
   \alpha_{n+1}=T_{\rho_n}(\alpha_n) \;&=\; \left\{\frac{1-\alpha_n}{\rho_n}\right\} \nonumber \\
   &=\; \left\{\frac{1}{2\rho_n}- 1\right\} \nonumber\\
   &=\; \left\{\frac{1}{2\rho_n}\right\} \ .
  \end{align}
  Again, using that $\rho_n^{-1}=a_{n}+\rho_{n+1}$, we 
  see that
  \[
   \alpha_{n+1}\;=\; \left\{\frac{1}{2}a_{n+1}+\frac{1}{2}\rho_{n+1}\right\}\;=\; \frac{\rho_{n+1}}{2}\ ,
  \]
  where in the last equality we have at last used the fact that $a_{n}$ is an even integer!
  This establishes the induction step when $n$ is even, and completes the proof of the second assertion.
 \end{enumerate}
 Finally, the last assertion in the statement is easily proved: if $\rho\in \mathbb{E}_\infty$, then $\rho_n\to 0$ as $n\to \infty$.
 Hence by \eqref{evenodd} we see that $\alpha_{2i}\to \frac{1}{2}$ as $i\to\infty$. This concludes the proof.
\end{proof}

\begin{remark} The above proof still works if only the odd partial quotients $a_{2k+1}$ are required to be even (but still requiring $a_n \neq 1$ for all $n$). The resulting class of numbers with this property is a bit larger than $\mathbb{E}$, but still has zero Lebesgue measure.
\end{remark}

\section{Proofs of Theorems \ref{ThmA} and \ref{ThmB}}\label{StheproofofA}

In this section we prove our first two main results, namely Theorem \ref{ThmA} and Theorem \ref{ThmB}. We first recall the setup for both theorems, and fix some notation.

Let $f,g:S^1\to S^1$ be  two $C^3$ (multi)critical circle maps with the same irrational rotation number
$\rho=[a_0,a_1,\ldots,a_n,\ldots]$. Let $h:S^1\to S^1$ be a
topological conjugacy between $f$ and $g$ mapping orbits of $f$ to orbits of $g$ ({\it i.e.,}, such that $h\circ f=g\circ h$).
Let $x, z\in S^1$ be such that $h(x)=z$. Suppose also that $w\in S^1$, $w\neq z$, is a critical point for $g$.
Assume one of the following two scenarios (which correspond to the situations in Theorems \ref{ThmA} and \ref{ThmB}, respectively).

\begin{itemize}
 \item[] {\sl{Scenario A\/}.} Both $f$ and $g$ are {\it uni-critical\/} circle maps, with critical points at $x$ and $w$, respectively.
 \item[] {\sl{Scenario B\/}.} The map $f$ is {\it uni-critical\/} with critical point at $x$, whereas the map $g$ is {\it bi-critical\/} with critical
 points at $z$ and $w$.
\end{itemize}

In either scenario, let $y=h^{-1}(w)$ and let $y_n$, $n\geq 0$, be the renormalization ancestors of $y$ (with respect to $x$ and $f$).
Likewise, let $w_n=h(y_n)$, $n\geq 0$, denote the renormalization ancestors of $w=h(y)$ (with respect to $z$ and $g$).
Finally, let $(\rho_n,\alpha_n)$, $n\geq 0$, be the renormalization trail of $y$ (with respect to $x$ and $f$)
-- which is also the renormalization trail of $w$ (with respect to $z$ and $g$).

Both Theorem \ref{ThmA} and Theorem \ref{ThmB} will be straightforward consequences of the following result.

\begin{figure}[t]
\begin{center}~
\hbox to \hsize{\psfrag{0}[][][1]{$0$} \psfrag{x}[][][1]{$x$}
\psfrag{z}[][][1]{$z$}
\psfrag{d}[][][1]{$\Delta_{n_i}$}
\psfrag{L}[][][1]{$L_{n_i}$}
\psfrag{R}[][][1]{$R_{n_i}$}
\psfrag{hd}[][][1]{$h(\!\Delta_{n_i}\!)$}
\psfrag{hL}[][][1]{$h(L_{n_i})$}
\psfrag{hR}[][][1]{$h(R_{n_i})$}
\psfrag{f0}[][][1]{$f^{q_{n_i}}(x)$}
\psfrag{f1}[][][1]{$f^{q_{n_i+1}}(x)$}
\psfrag{g0}[][][1]{$g^{q_{n_i}}(z)$}
\psfrag{g1}[][][1]{$g^{q_{n_i+1}}(z)$}
\psfrag{y}[][][1]{$y_{n_i}$}
\psfrag{w}[][][1]{$\!\!w_{n_i}$}
\hspace{1.0em} \includegraphics[width=5.2in]{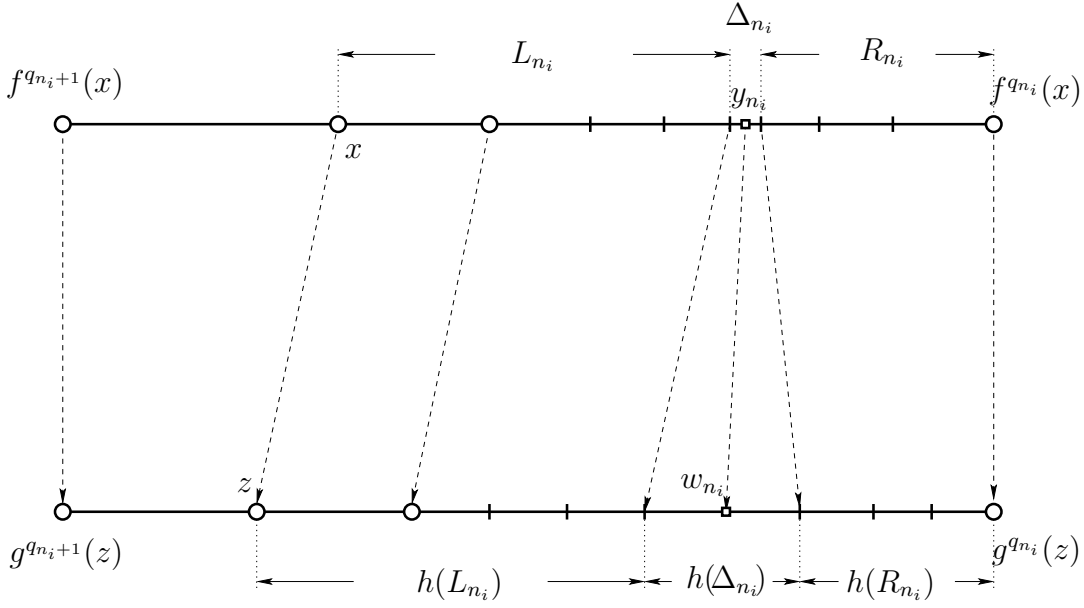}
   }
\end{center}
  \caption[markov]{\label{cross} The distortion of cross-ratios is large.}
\end{figure}

\begin{lemma}\label{crossratiodistortion}
 In either of the two scenarios above, suppose that there exists a subsequence $n_i\to\infty$ such
 that $\rho_{n_i+1}\to 0$ as $i\to\infty$, and
 $\left|\alpha_{n_i+1}-\frac{1}{2}\right|<\frac{1}{4}$ for all $i$. Then the conjugacy $h$ is \emph{not} quasisymmetric.
\end{lemma}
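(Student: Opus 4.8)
The plan is to use Corollary \ref{qscross2} in its contrapositive form: I will exhibit, along the subsequence $n_i$, a nested pair of intervals $\Delta_{n_i}\Subset T_{n_i}$ on the domain side whose cross-ratio $[\Delta_{n_i},T_{n_i}]$ stays bounded away from $0$ and $\infty$, while the cross-ratio of the images $[h(\Delta_{n_i}),h(T_{n_i})]$ tends to $\infty$ (or to $0$). This immediately contradicts weakly bounded cross-ratio distortion, hence $h$ cannot be quasisymmetric. The natural candidates, as suggested by Figure \ref{cross}, are: $T_{n_i}=I_{n_i}(x)$ (the long atom of $\mathcal{P}_{n_i}(x)$ based at $x$), $\Delta_{n_i}$ the fundamental domain $\Delta_{\ell_{n_i},n_i}=f^{q_{n_i}+\ell_{n_i}q_{n_i+1}}(I_{n_i+1})\subset I_{n_i}(x)$ containing the ancestor $y_{n_i}$, and $L_{n_i},R_{n_i}$ the two complementary sub-arcs of $T_{n_i}\setminus\Delta_{n_i}$. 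On the $g$-side one uses the corresponding intervals at $z$, and $h$ maps one configuration to the other since $h$ conjugates the dynamical partitions at $x$ to those at $z$.

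First I would translate the hypotheses into metric information. The condition $\rho_{n_i+1}\to 0$ means (via Lemma \ref{formmuIn}, $\mu(I_{n+1})=\rho_{n+1}\mu(I_n)$) that the short atom $I_{n_i+1}$ has $\mu$-measure negligible compared to $I_{n_i}$, so $a_{n_i+1}=\lfloor 1/\rho_{n_i+1}\rfloor\to\infty$; thus $I_{n_i}(x)$ is subdivided by $f^{q_{n_i+1}}$ into $a_{n_i+1}\to\infty$ fundamental domains. The condition $|\alpha_{n_i+1}-\tfrac12|<\tfrac14$ locates $y_{n_i}$ roughly in the \emph{middle} of $I_{n_i}(x)$ in the $\mu$-metric: since $\alpha_{n_i+1}=\mu((x,y_{n_i}))/\mu(I_{n_i})$ when $y_{n_i}$ lies in the long atom (and one checks the short-atom case reduces to this after one more step of the trail), the index $\ell_{n_i}\asymp\alpha_{n_i+1}a_{n_i+1}$ is comparable to $a_{n_i+1}$, and so is $a_{n_i+1}-\ell_{n_i}$; that is, $\Delta_{n_i}$ sits at a "deep interior" position, at distance $\asymp a_{n_i+1}$ from both endpoints of $I_{n_i}(x)$, measured in units of fundamental domains.

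Next I would use Lemmas \ref{negschwarz} and \ref{yoccozlemma} (Yoccoz's lemma) to convert these combinatorial positions into length estimates. For $n_i$ large, $f^{q_{n_i+1}}$ restricted to (a neighborhood of) $I_{n_i}(x)$ is, after removing the critical-spot atoms, an almost parabolic map of length $\asymp a_{n_i+1}$ whose width is bounded below by the real bounds (Theorem \ref{realbounds}); applying Yoccoz's inequality \eqref{yocineq} and Lemma \ref{criticalspots} (critical spots are comparable to the enclosing atom, so they don't spoil the estimate) gives $|\Delta_{n_i}|\asymp |I_{n_i}(x)|/[\min\{\ell_{n_i}+1,\,a_{n_i+1}-\ell_{n_i}\}]^2 \asymp |I_{n_i}(x)|/a_{n_i+1}^2$, while summing the Yoccoz estimates over the intermediate atoms gives $|L_{n_i}|\asymp|R_{n_i}|\asymp |I_{n_i}(x)|$. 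Hence
\[
[\Delta_{n_i},T_{n_i}]=\frac{|\Delta_{n_i}|\,|I_{n_i}(x)|}{|L_{n_i}|\,|R_{n_i}|}\asymp \frac{1}{a_{n_i+1}^2}\longrightarrow 0 .
\]
Wait — this tends to $0$, not to a bounded nonzero value; so I would instead place $\Delta_{n_i}$ as the \emph{central} interval of a Yoccoz configuration and rescale: the cleaner choice is to take $T_{n_i}$ to be the union of the $\asymp$ two central fundamental domains around $y_{n_i}$ together with their two neighbours (so that $[\Delta_{n_i},T_{n_i}]\asymp 1$), and then the relevant asymmetry comes from comparing $f$-side and $g$-side. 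The key point is that on one side the two critical points of the relevant renormalization straddle $y_{n_i}$ differently than on the other: in Scenario A both $x$ and $w$ are critical, so the almost-parabolic maps $f^{q_{n_i+1}}|I_{n_i}(x)$ and $g^{q_{n_i+1}}|I_{n_i}(z)$ have comparable Yoccoz geometry, whereas — and here is the actual mechanism — the ancestor $y_{n_i}$ is a \emph{regular} point for $f^{q_{n_i+1}}$ but its image $w_{n_i}=h(y_{n_i})$ is at bounded combinatorial distance from the \emph{critical point} $w$ of $g$ (because the trail of $w$ relative to $z$ equals that of $y$, and $\alpha_{n_i+1}$ being near $\tfrac12$ forces $w_{n_i}$ into a critical spot or adjacent to one). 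Consequently $h(\Delta_{n_i})$ is a critical spot for $g^{q_{n_i+1}}$ and by Lemma \ref{criticalspots} has $|h(\Delta_{n_i})|\asymp |I_{n_i}(z)|$, whereas $\Delta_{n_i}$ itself is deep-interior-regular with $|\Delta_{n_i}|\asymp |I_{n_i}(x)|/a_{n_i+1}^2$. Plugging into the cross-ratios (with the $L,R$ pieces of bounded relative size on the $g$-side by the real bounds) yields $[h(\Delta_{n_i}),h(T_{n_i})]/[\Delta_{n_i},T_{n_i}]\asymp a_{n_i+1}^2\to\infty$, which is the desired blow-up of cross-ratio distortion.

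The main obstacle — and the step I would spend the most care on — is the bookkeeping that identifies $w_{n_i}$ as lying in (or adjacent to) a critical spot of $g^{q_{n_i+1}}$. This requires tracking how the critical point $w$ of $g$ sits relative to the dynamical partition $\mathcal{P}_{n_i}(z)$ as a function of the second trail coordinate $\alpha_{n_i+1}$, and using $|\alpha_{n_i+1}-\tfrac12|<\tfrac14$ together with $\rho_{n_i+1}\to 0$ to guarantee that $w$ falls into one of the $O(1)$ fundamental domains surrounding $w_{n_i}$; one then invokes Lemma \ref{criticalspots} for $g$. In Scenario B there is the additional wrinkle that $z$ itself is critical for $g$ (so $I_{n_i}(z)$ already has a critical spot at its endpoint), but this only reinforces the geometry and the same Yoccoz/real-bounds estimates apply verbatim. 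All the remaining estimates (Yoccoz's lemma applies by Lemma \ref{negschwarz}; critical spots are large by Lemma \ref{criticalspots}; complementary pieces are comparable to the whole atom) are routine once the combinatorial location of the critical point is pinned down, and the conclusion follows from Corollary \ref{qscross2} by contradiction.
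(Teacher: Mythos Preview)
Your approach is essentially the same as the paper's, and the overall architecture---exhibit intervals $\Delta_{n_i}\Subset I_{n_i}(x)$ with $[\Delta_{n_i},I_{n_i}]\asymp a_{n_i+1}^{-2}$ on the $f$-side while $[h(\Delta_{n_i}),h(I_{n_i})]\asymp 1$ on the $g$-side, then contradict Proposition~\ref{qscross1}/Corollary~\ref{qscross2}---is correct.

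There is, however, a genuine conceptual confusion in what you identify as ``the main obstacle.'' You write that you would need to track ``how the critical point $w$ of $g$ sits relative to the dynamical partition $\mathcal{P}_{n_i}(z)$ as a function of the second trail coordinate $\alpha_{n_i+1}$,'' and that the condition $|\alpha_{n_i+1}-\tfrac12|<\tfrac14$ is what forces $w_{n_i}$ into (or near) a critical spot. This is backwards. By the very \emph{definition} of renormalization ancestors (Section~\ref{sectrails}), $w_{n_i}=h(y_{n_i})$ is a point of $I_{n_i}(z)\cup I_{n_i+1}(z)$ satisfying $g^j(w_{n_i})=w$ for some $0\le j<q_{n_i+1}$; since $w$ is a critical point of $g$, the chain rule gives $Dg^{q_{n_i+1}}(w_{n_i})=0$ immediately. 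Hence $h(\Delta_{n_i})$ is a critical spot for $g^{q_{n_i+1}}$ \emph{automatically}, with no appeal to $\alpha_{n_i+1}$ whatsoever, and Lemma~\ref{criticalspots} applies at once to give $|h(\Delta_{n_i})|\asymp |h(I_{n_i})|$. The hypothesis $|\alpha_{n_i+1}-\tfrac12|<\tfrac14$ is used solely on the \emph{$f$-side}: it pins the index $k_{n_i}$ of $\Delta_{n_i}$ to the middle range $k_{n_i}\asymp a_{n_i+1}-k_{n_i}\asymp a_{n_i+1}$, so that Yoccoz's Lemma~\ref{yoccozlemma} (applicable because $x$ is the unique critical point of $f$, hence $f^{q_{n_i+1}}$ is a genuine diffeomorphism on the interior of $I_{n_i}(x)\setminus I_{n_i+2}(x)$) yields $|\Delta_{n_i}|\asymp |I_{n_i}(x)|/a_{n_i+1}^2$.

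Your ``Wait'' detour is also unnecessary. There is no need for $[\Delta_{n_i},I_{n_i}]$ to remain in a fixed compact interval: Proposition~\ref{qscross1} says directly that a quasisymmetric $h$ would force $[h(\Delta_{n_i}),h(I_{n_i})]\le\sigma([\Delta_{n_i},I_{n_i}])\to 0$, contradicting $[h(\Delta_{n_i}),h(I_{n_i})]\asymp 1$. Keep $T_{n_i}=I_{n_i}(x)$; no refinement to a smaller window is needed.
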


\begin{proof}[Proof of Lemma \ref{crossratiodistortion}] The idea is to show that $h$ has unbounded distortion of cross-ratios (see \S \ref{sectools} for the definition, and recall from Corollary \ref{qscross2} that this implies that $h$ is not quasisymmetric). Passing to a subsequence if necessary, we may assume that either (a) $y_{n_i}\in I_{n_i}$ for all $i$; or (b) $y_{n_i}\in I_{n_i+1}$ for all $i$. We give the proof assuming that case (a) holds. The proof in case (b) is the same, {\it mutatis mutandis\/}.

By restricting our attention to sufficiently large $i$, we may assume that $\rho_{n_i+1}<\frac{1}{9}$, which implies that $a_{n_i+1} > 8$. Then we must have $y_{n_i}\in I_{n_i}\setminus I_{n_i+2}$. Indeed, if $y_{n_i}\in I_{n_i+2}$, then $\alpha_{n_i+1}\leq \mu(I_{n_i+2})/\mu(I_{n_i})<\frac{1}{9}$, which contradicts the hypothesis. Since the intervals
\begin{equation}\label{ydelta}
\Delta^{(j)}\;=\;f^{q_{n_i}+jq_{n_i+1}}(I_{n_i+1})\ ,\ \ \ \  0\leq j\leq a_{n_i+1}-1\ ,
\end{equation}
constitute a partition of $I_{n_i}\setminus I_{n_i+2}$ (modulo endpoints), it follows that there exists $0\leq k_{n_i}\leq a_{n_i+1}-1$ such that
 $y_{n_i}\;\in\; \Delta_{n_i}=\Delta^{(k_{n_i})}$.
 \medskip

 {\it Claim.\/} We have $k_{n_i} \asymp a_{n_i+1}\asymp a_{n_i+1}-k_{n_i}  $.
 \medskip

 In order to prove this claim, we first recall that
 \begin{equation}\label{alphamu}
  1-\alpha_{n_i+1}\;=\; \frac{\mu([y_{n_i},f^{q_{n_i}}(x)])}{\mu(I_{n_i})}\ ,
 \end{equation}
 where as before $\mu$ is the unique Borel probability measure invariant under $f$. Moreover, we have
 \begin{equation}\label{squeezeint}
  \bigcup_{j=0}^{k_{n_i}-1} \Delta^{(j)}\;\subseteq \;  [y_{n_i},f^{q_{n_i}}(x)]  \;\subseteq \;
  \Delta_{n_i} \;\cup\; \bigcup_{j=0}^{k_{n_i}-1} \Delta^{(j)}\ .
 \end{equation}
 Since $\mu(\Delta^{(j)}) = \mu(I_{n_i+1})$ for all $j$, from \eqref{alphamu} and \eqref{squeezeint} we get
 \begin{equation}\label{squeezealpha}
  k_{n_i}\,\frac{\mu(I_{n_i+1})}{\mu(I_{n_i})}\;\leq \;1- \alpha_{n_i+1}\;\leq \; (k_{n_i}+1)\,\frac{\mu(I_{n_i+1})}{\mu(I_{n_i})}\ .
 \end{equation}
 Taking into account that
 \[
  \rho_{n_i+1}\;=\; \frac{\mu(I_{n_i+1})}{\mu(I_{n_i})}
 \]
 and that, by hypothesis, $\frac{1}{4}<1-\alpha_{n_i+1}<\frac{3}{4}$, we deduce from \eqref{squeezealpha} that
 \[
  \frac{1}{4\rho_{n_i+1}} - 1 \;<\;k_{n_i}\;<\; \frac{3}{4\rho_{n_i+1}}\ .
 \]
 But $\rho_{n_i+1}^{-1} = a_{n_i+1} +\rho_{n_i+2}$, and $0<\rho_{n_i+2}<1$, so
 \[
  \frac{1}{4}-\frac{1}{a_{n_i+1}}\;<\; \frac{k_{n_i}}{a_{n_i+1}}\;<\; \frac{3}{4}\left(1+\frac{1}{a_{n_i+1}}\right)\ ,
 \]
 and since $\rho_{n_i+1}<\frac{1}{9}$ implies $a_{n_i+1}>8$, we deduce that
 \[
  \frac{1}{8}\;<\;\frac{k_{n_i}}{a_{n_i+1}}\;<\; \frac{27}{32}\ .
 \]
 This proves the claim.

Now, provided $n_i$ is sufficiently large, the map $f^{q_{n_i+1}}$ restricted to the interval $I_{n_i}\setminus I_{n_i+2}$ is an almost parabolic map (see Definition \ref{def:apm} in Section \ref{sectools}). Here we need $n_i$ large enough so that, restricted to the interval in question, the map $f^{q_{n_i+1}}$ is a diffeomorphism with negative Schwarzian derivative, and this is true by Lemma \ref{negschwarz}. By Yoccoz's Lemma \ref{yoccozlemma} and the above claim, we have
 \[
  \frac{|\Delta_{k_{n_i}|}}{|I_{n_i}|}\;\asymp\; \frac{1}{\min\{k_{n_i}^2\,,\, (a_{n_i+1}-k_{n_i})^2\}}\;\asymp\; \frac{1}{a_{n_i+1}^2}\ .
 \]

Letting $L_{n_i}$ and $R_{n_i}$ denote the left and right components of $I_{n_i}\setminus \Delta_{n_i}$, we know from the real bounds (Theorem \ref{realbounds}) that $|L_{n_i}|\asymp |I_{n_i}|\asymp |R_{n_i}|$.
 Therefore we see that
 \begin{equation}\label{firstcross}
  [\Delta_{n_i},I_{n_i}] \;=\; \frac{|\Delta_{n_i}||I_{n_i}|}{|L_{n_i}||R_{n_i}|}\;\asymp\; \frac{1}{a_{n_i+1}^2}\ .
 \end{equation}
 The next step is to estimate the cross-ratio determined by the pair of intervals $h(\Delta_{n_i})$ and $h(I_{n_i})$.
 Here, we first note that $w_{n_i}=h(y_{n_i})\in h(\Delta_{n_i})$ is a critical point for the map $g^{q_{n_i+1}}$; in the terminology
 of \cite{EdF}, $h(\Delta_{n_i})$ is therefore a {\it critical spot\/} of $g^{q_{n_i+1}}|_{h(I_{n_i})}$. 
 As we saw in Lemma \ref{criticalspots},
 every critical spot of a renormalization return map is comparable to the interval domain of said return map. Hence we have
 $|h(\Delta_{n_i})|\asymp |h(I_{n_i})|$. Moreover, by the real bounds for $g$, we have $|h(L_{n_i})|\asymp |h(I_{n_i})|\asymp |h(R_{n_i})|$.
 These facts show that
 \begin{equation}\label{secondcross}
  [h(\Delta_{n_i}),h(I_{n_i})] \;=\; \frac{|h(\Delta_{n_i})||h(I_{n_i})|}{|h(L_{n_i})||h(R_{n_i})|}\;\asymp\; 1\ .
 \end{equation}
 Combining \eqref{firstcross} with \eqref{secondcross}, we finally get an estimate on the cross-ratio distortion of
 the pair of intervals $\Delta_{n_i}\subset I_{n_i}$ under $h$, to wit
 \[
  \crd(h;\Delta_{n_i}\,,\,I_{n_i}) \;=\; \frac{[h(\Delta_{n_i}),h(I_{n_i})]}{[\Delta_{n_i},I_{n_i}]}
  \;\asymp\; a_{n_i+1}^2\ .
 \]But since $\rho_{n_i+1}\to 0$, we have $a_{n_i+1}\to \infty$. This shows that the cross-ratio distortion of $h$ blows up, and so $h$ cannot be quasisymmetric (recall Corollary \ref{qscross2}). The proof of Lemma \ref{crossratiodistortion} is complete.
\end{proof}

\begin{proof}[Proof of Theorem \ref{ThmA}] Consider the sets $\bm{R}$ and $\mathcal{B}_{c_f}$ given by Theorem \ref{propresiduais} (applied to $f$ and $x=c_f$), and define $\bm{R}_A=\bm{R}$. Then Lemma \ref{crossratiodistortion} (applied in the {\sl{Scenario A\/}} case) implies that $\mathcal{B}_{c_f}\subset\mathcal{B}$, which proves Theorem \ref{ThmA}. Remember also that, as explained in Section \ref{secenunciadoThmA}, the fact that the complement of $\mathcal{B}$ is dense follows from the fact that it is non-empty and invariant under the minimal homeomorphism $f$.
\end{proof}

\begin{proof}[Proof of Theorem \ref{ThmB}] By Lemma \ref{crossratiodistortion} (applied in the {\sl{Scenario B\/}} case), it is enough to consider$$\mathcal{G}=\mathcal{G}_0\cup\left(\mathbb{E}_{\infty}\times\left\{\frac{1}{2}\right\}\right) \subset R\,,$$where $\mathcal{G}_0$ is given by Proposition \ref{genorbdensas}, and $\mathbb{E}_{\infty}$ is given by Proposition \ref{evennumbers}.
\end{proof}

\section{Proof of Theorem \ref{ThmC}: admissible pairs for bi-critical circle maps}\label{S:real}

\subsection{Admissible pairs} We start Section \ref{S:real} with a definition. Remember that $R$ denotes the rectangle $[0,1]\times[-1,1]$ in $\R^2$, and $M=\big([0,1]\setminus\Q\big)\times[-1,1] \subset R$.

\begin{definition}\label{defadm} A pair $(\rho,\alpha)\in M$ is said to be \emph{admissible} if there exists a $C^{\infty}$ multicritical circle map $g$ with irrational rotation number $\rho$, a unique invariant measure $\mu$ and with exactly two critical points $c_1$ and $c_2$ such that the two connected components of $S^1\setminus\{c_1,c_2\}$ have $\mu$-measures equal to $\alpha$ and $1-\alpha$ respectively.
\end{definition}

The set of admissible pairs is denoted by $\mathbb{A}$.

\begin{lemma}\label{lemasquad} Any pair $(\rho,\alpha)\in(0,1)^2$ such that $\rho\notin\Q$ and $\rho-2\alpha=0$ belongs to $\mathbb{A}$.
\end{lemma}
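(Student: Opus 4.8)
Since $\rho-2\alpha=0$ and $\rho\in(0,1)$ we have $\alpha=\rho/2\in(0,1/2)$, so the lemma asserts that $(\rho,\rho/2)\in\mathbb{A}$ for every irrational $\rho\in(0,1)$. The plan is to realize such a pair by a \emph{square root}. Fix a $C^{\infty}$ unicritical circle map $\psi$ with a single non-flat critical point $c$, of odd criticality $d$ (say $d=3$), and with $\rho(\psi)=\rho/2$; here we use the square root of $R_{\rho}$ whose rotation number lies in $(0,1/2)$, so that $\rho(\psi)=\rho/2$ and not $(\rho+1)/2$. Such $\psi$ exist by the classical theory of critical circle maps (for instance, a suitable member of the Arnold family $x\mapsto x+a+\frac{1}{2\pi}\sin 2\pi x$ at the critical parameter realizes every irrational rotation number, with a cubic critical point). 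Put $g=\psi\circ\psi$. Then $g$ is $C^{\infty}$ and, since $\rho/2<1/2$, $\rho(g)=2\,\rho(\psi)=\rho$, which is irrational; hence $g$ is uniquely ergodic, and we let $\mu$ be its invariant probability measure. By Yoccoz's theorem \cite{yoccoz}, $\psi$ (being a critical circle map) is topologically conjugate to $R_{\rho/2}$; the push-forward of Lebesgue measure under the inverse of such a conjugacy is $\psi$-invariant, hence $g$-invariant, so it coincides with $\mu$.

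Next I would verify that $g$ is a genuine bi-critical circle map. As $\psi$ has irrational rotation number it has no periodic points, so $c$, $\psi(c)$ and $\psi^{-1}(c)$ are pairwise distinct; set $c_{1}=c$ and $c_{2}=\psi^{-1}(c)$. By the chain rule $Dg=(D\psi\circ\psi)\cdot D\psi$ vanishes precisely on $\{c_{1},c_{2}\}$, and $c_{1}\neq c_{2}$. Near $c_{1}$, $g$ is the non-flat critical germ of $\psi$ at $c_{1}$ followed by the local $C^{\infty}$ diffeomorphism $\psi$ near $\psi(c_{1})$; near $c_{2}$, $g$ is the local $C^{\infty}$ diffeomorphism $\psi$ near $c_{2}$ followed by the non-flat critical germ of $\psi$ at $c_{1}$. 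In both cases a short local normal-form computation — writing the germ of $\psi$ as $\psi(c)+\Phi\cdot|\Phi|^{d-1}$ with $\Phi$ a $C^{\infty}$ diffeomorphism germ, and using that $d$ is odd so that $t\mapsto t|t|^{d-1}=t^{d}$ is $C^{\infty}$ — shows that $g$ has at $c_{1}$ (respectively $c_{2}$) a non-flat critical point of the same criticality $d$, and is $C^{\infty}$ there. Thus $g$ is a $C^{\infty}$ multicritical circle map with exactly two critical points $c_{1},c_{2}$.

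It remains to compute the $\mu$-measures of the two components of $S^{1}\setminus\{c_{1},c_{2}\}$. Since $\psi\circ g=\psi^{3}=g\circ\psi$, the measure $\psi_{*}\mu$ is again $g$-invariant, so by unique ergodicity $\psi_{*}\mu=\mu$: the measure $\mu$ is $\psi$-invariant. As in the derivation of the formula $\mu(I_{0})=\rho$ recalled in \S\ref{gaussmap}, conjugating $\psi$ to the rigid rotation $R_{\rho/2}$ then gives $\mu\big([p,\psi(p)]\big)=\rho(\psi)=\rho/2$ for every $p\in S^{1}$, where $[p,\psi(p)]$ denotes the arc from $p$ to $\psi(p)$ in the positive direction. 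Applying this with $p=c_{2}$ and using $\psi(c_{2})=c_{1}$, the component $[c_{2},c_{1}]$ of $S^{1}\setminus\{c_{1},c_{2}\}$ has $\mu$-measure $\rho/2$, whence the other component has $\mu$-measure $1-\rho/2$. Therefore $g$ witnesses $(\rho,\rho/2)\in\mathbb{A}$, as claimed.

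The conceptual content of the argument is simply that $g$ admits a unicritical square root lying in its centralizer, which pins the second critical point to the $\mu$-midpoint of the fundamental domain issuing from the first; there is thus no genuine obstacle. The only steps requiring (routine) care are the two local verifications in the second paragraph, namely that $g=\psi^{2}$ is $C^{\infty}$ with non-flat critical points exactly at $c_{1}$ and $c_{2}$; these are entirely local normal-form computations. (Taking $\psi$ real-analytic yields a real-analytic example by the same construction.)
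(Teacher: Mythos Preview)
Your proof is correct and follows essentially the same approach as the paper: take a $C^{\infty}$ unicritical circle map $\psi$ with $\rho(\psi)=\alpha=\rho/2$ and set $g=\psi^{2}$, so that $g$ is bi-critical with critical points $c_{1}=c$ and $c_{2}=\psi^{-1}(c)$ and the arc from $c_{2}$ to $c_{1}$ has $\mu$-measure $\rho(\psi)=\alpha$. You supply more detail than the paper (non-flatness of the two critical points of $g$, explicit justification that $\mu$ is $\psi$-invariant), but the construction is the same.
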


\begin{proof}[Proof of Lemma \ref{lemasquad}] Let $f_0$ be a $C^{\infty}$ critical circle map with a single critical point $c(f_0)$ and such that $\rho(f_0)=\alpha$ (note that $f_0$ can be chosen to be real-analytic, say from the standard Arnold's family). Let us denote by $\mu$ the unique invariant Borel probability measure of $f_0$. Define $g=f_0^2=f_0 \circ f_0$\,, and note that $g$ is a bi-critical circle map, with irrational rotation number $\rho(g)=2\rho(f_0)=2\alpha=\rho$ and with two critical points $c_1(g)=c(f_0)$ and $c_2(g)=f_0^{-1}\big(c(f_0)\big)$. Moreover, the unique invariant Borel probability measure of $g$ is $\mu$, and the two connected components of $S^1\setminus\{c_1,c_2\}$
have $\mu$-measures equal to $\alpha$ and $1-\alpha$ respectively, since $c_1=f_0(c_2)$.
\end{proof}

The main result of this section is the following.

\begin{theorem}[The $C^{\infty}$ Realization Lemma]\label{lemaadm} Every pair in $M$ is admissible; in other words, $\mathbb{A}=M$.
\end{theorem}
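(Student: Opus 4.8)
The plan is to produce the required maps by an explicit $C^{\infty}$ construction and then use the dynamics of the skew product $T$ to fill in the remaining pairs.

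\emph{A two‑parameter family.} For each $s\in(0,1)$ let $\Phi_s\colon S^1\to S^1$ be the (real‑analytic) orientation‑preserving circle homeomorphism of degree one whose lift has derivative $\Phi_s'(x)=N_s\,\sin^2(\pi x)\,\sin^2\big(\pi(x-s)\big)$, where $N_s>0$ is the normalizing constant making $\int_0^1\Phi_s'=1$; then $\Phi_s$ has exactly two non‑flat critical points, situated at $0$ and at $s$. Post‑composing with a rotation does not move the critical points, so $g_{t,s}:=R_t\circ\Phi_s$ is, for every $t$, a $C^{\infty}$ bi‑critical circle map with critical points $0$ and $s$. For fixed $s$ the function $t\mapsto\rho(R_t\circ\Phi_s)$ is a non‑decreasing continuous surjection onto $S^1$, and at every irrational value it is strictly increasing (no mode‑locking occurs at an irrational rotation number); hence for each irrational $\rho$ there is a unique $t=t(\rho,s)$ with $\rho\big(g_{t(\rho,s),s}\big)=\rho$, and $(\rho,s)\mapsto t(\rho,s)$ is continuous. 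Write $g_{\rho,s}:=g_{t(\rho,s),s}$, with unique (non‑atomic) invariant probability measure $\mu_{\rho,s}$.

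\emph{Filling the fibres.} Put $\alpha(\rho,s)=\mu_{\rho,s}\big((0,s)\big)$, the measure of the arc between the two critical points, so that $g_{\rho,s}$ realizes the pair $(\rho,\alpha(\rho,s))$ in the sense of Definition \ref{defadm}. The assignment $(\rho,s)\mapsto g_{\rho,s}$ is continuous into $C^0(S^1)$, so by unique ergodicity $\mu_{\rho,s}$ depends continuously (in the weak-$*$ topology) on $(\rho,s)$; since these measures are non‑atomic, $\alpha(\rho,s)$ is continuous. As $s\to 0^{+}$ the two critical points coalesce and $\alpha(\rho,s)\to 0$, while as $s\to 1^{-}$ the arc exhausts the circle and $\alpha(\rho,s)\to 1$; hence, by the intermediate value theorem, for every irrational $\rho$ the map $s\mapsto\alpha(\rho,s)$ attains every value in $(0,1)$. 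Therefore $\big([0,1]\setminus\Q\big)\times(0,1)\subseteq\mathbb{A}$, and in particular $\mathbb{A}$ contains the set $(0,1)^2\cap M$, which has non‑empty interior in $R$.

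\emph{Closing the argument.} It remains to reach the pairs in $M$ with $\alpha\notin(0,1)$, and for this one checks that $\mathbb{A}$ is forward invariant under $T$: if $g$ realizes $(\rho,\alpha)$ with critical points $c_1,c_2$, the renormalization of $g$ based at a critical point is again a bi‑critical commuting pair, its rotation number is $G(\rho)$, and by the computation in Lemma \ref{leminhacomb} the $\mu$‑measure of the arc between its two critical points equals $T_\rho$ of the corresponding trail entry (which lies in $\{\alpha,1-\alpha\}$); gluing this pair back into a $C^{\infty}$ circle map, and using both choices of base critical point, yields $(G(\rho),T_\rho(\alpha))\in\mathbb{A}$. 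Since $(0,1)^2$ has non‑empty interior in $R$, Lemma \ref{Tcov} provides $n\in\nt$ with $T^{n}\big((0,1)^2\cap M\big)=M$, so $M=T^{n}\big((0,1)^2\cap M\big)\subseteq T^{n}(\mathbb{A}\cap M)\subseteq\mathbb{A}\subseteq M$, i.e. $\mathbb{A}=M$. I expect the main obstacle to be the surjectivity in the middle step — realizing \emph{every} irrational rotation number through the devil's‑staircase behaviour of the family $\{R_t\circ\Phi_s\}$ and \emph{every} $\alpha\in(0,1)$, which rests on the joint continuity of the invariant measure in the parameters and on control of the degenerate limits as the two critical points merge; the (un‑)renormalization and gluing used to propagate admissibility along $T$ is standard but must be carried out carefully in the $C^{\infty}$ category.
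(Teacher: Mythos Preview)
Your proof is essentially correct and takes a genuinely different route from the paper in the key middle step. The paper also builds a two-parameter family of bi-critical maps and proves continuity of $\alpha$ (there via a Denjoy--Koksma estimate, Lemma~\ref{lemacont}), but its family is made from compactly supported bump perturbations of a rotation, and it only shows that $\mathbb{A}$ contains a small explicit rectangle (Proposition~\ref{lemaAab} via Lemma~\ref{lemverticals}, which gives $J_{\rho_0}\supset[2\rho_0,3\rho_0]$ for $\rho_0<\tfrac18$); the heavy lifting is then done by $T$-invariance (Lemma~\ref{lemaATinv}) together with topological exactness (Lemma~\ref{Tcov}) to spread that rectangle over all of $M$. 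Your explicit analytic family $\{R_t\circ\Phi_s\}$ and the endpoint analysis as $s\to 0^+,1^-$ yield the much stronger intermediate conclusion $\big([0,1]\setminus\Q\big)\times(0,1)\subseteq\mathbb{A}$ directly, so the $T$-dynamics are needed only for the boundary fibres; in fact, for the version of the Realization Lemma stated in the introduction (where $\alpha\in(0,1)$) your second step already finishes the proof. What this buys you is a more self-contained argument and even a real-analytic realization; what the paper's approach buys is that the localized bump-function family makes the estimates on $\alpha$ completely explicit and elementary. Your self-identified obstacle is real but surmountable: one must check that $t(\rho,s)$ extends continuously to $s\in\{0,1\}$ (where $\Phi_s$ degenerates to a \emph{unicritical} map of criticality five, still with irrational rotation number and hence uniquely ergodic with non-atomic invariant measure), and that $\mu_{\rho,s}\big((0,s)\big)\to 0$ then follows from weak-$*$ convergence to that non-atomic limit by a standard portmanteau argument.
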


The statement of Theorem \ref{lemaadm} is the same as the $C^{\infty}$ Realization Lemma given in the introduction. When combined with Theorem \ref{ThmB}, the $C^{\infty}$ Realization Lemma implies Theorem \ref{ThmC}. In order to prove Theorem \ref{lemaadm} we first remark the following consequence of Lemma \ref{leminhacomb}:

\begin{lemma}\label{lemaATinv} The set $\mathbb{A}$ of admissible pairs is forward invariant under $T$, where $T:M \to M$ is the skew product constructed in Section \ref{Secskew}. 
\end{lemma}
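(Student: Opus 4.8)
The plan is to show that, given an admissible pair $(\rho,\alpha)\in\mathbb{A}$, realized by some $C^{\infty}$ bi-critical circle map $g$ with invariant measure $\mu$, critical points $c_1,c_2$ and $\mu$-measures $\alpha,1-\alpha$ of the two complementary arcs, the pair $T(\rho,\alpha)=\big(G(\rho),T_{\rho}(\alpha)\big)$ is again realized by a $C^{\infty}$ bi-critical circle map — namely by a suitable renormalization (or return map) of $g$ itself, rescaled back to the circle. The point is that the skew product $T$ was constructed precisely to mirror, on the level of the combinatorial data $(\rho_n,\alpha_n)$, the passage to the next renormalization level; this is exactly the content of Lemma \ref{leminhacomb}.

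First I would set $x=c_1$ and $y=c_2$, and observe that since both $c_1$ and $c_2$ are critical points of $g$, the renormalization trail $\{(\rho_n,\alpha_n)\}$ of $y=c_2$ with respect to $x=c_1$ and $g$ has initial term $(\rho_0,\alpha_0)=(\rho,\alpha)$: indeed $\rho_0=\rho(g)=\rho$, and $\alpha_0$ is by construction the $\mu$-measure of the arc from $c_1$ to $c_2$ measured counterclockwise inside the appropriate atom, which is $\alpha$ (or $1-\alpha$, after possibly relabelling the two critical points — one checks both choices give admissible data and it suffices to treat one). By Lemma \ref{leminhacomb} we then have $(\rho_1,\alpha_1)=T(\rho,\alpha)$. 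So it remains to exhibit a $C^{\infty}$ bi-critical circle map realizing $(\rho_1,\alpha_1)$.

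For that I would take the first return map of $g$ to the fundamental domain $I_0=\big(c_1,g(c_1)\big]$ (or to $I_1$, depending on which atom $c_2=y_0$ falls into), glue its endpoints to obtain a circle map, and rescale. This return map is a $C^{\infty}$ circle homeomorphism; its rotation number is $G(\rho)=\rho_1$ by the standard renormalization-shift property of continued fractions; it has exactly two critical points, namely $c_1$ itself (an endpoint of the fundamental domain, hence identified to a single point of the new circle) and the image of the renormalization ancestor $y_0=c_2$ under the gluing — these are genuine non-flat critical points because $g$ is $C^{\infty}$ with non-flat critical points and the return map is built from iterates of $g$ near $c_1$ and near $c_2$. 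Finally, the unique invariant probability measure of this return map is the normalization of $\mu$ restricted to the fundamental domain, so the $\mu$-measure of the arc between its two critical points, renormalized, is precisely $\mu\big((c_1,y_0)\big)/\mu(I_0)=\alpha_1$ in the first case and $-\mu\big((c_1,y)\big)/\mu(I_1)=\alpha_1$ in the second — which is exactly the second coordinate of $T(\rho,\alpha)$ as computed in Lemma \ref{leminhacomb}. This gives admissibility of $T(\rho,\alpha)$, and since $\mathbb{A}\subset M$ and $T(M)\subset M$, forward invariance follows.

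The main obstacle I anticipate is purely bookkeeping rather than conceptual: one must check that the gluing of the return map's domain produces a genuinely $C^{\infty}$ circle map with non-flat critical points of the required (power-law) type at the two marked points, and carefully match the sign conventions and the three cases in the definition of $T_{\rho}$ (the three branches in \S\ref{SecFibermaps}) with the position of the ancestor $y_0$ relative to $I_0$, $I_1$ and $I_2$. The smoothness of the glued map is unproblematic since $c_1$ is an endpoint of a fundamental domain (so only one iterate of $g$ is involved there and the gluing is along a single orbit point), but this needs to be said explicitly; once it is, the identification of rotation number and of the invariant measure follows from the standard properties of first return maps and Lemma \ref{leminhacomb}.
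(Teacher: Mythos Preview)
Your proposal is correct and follows essentially the same route as the paper: realize $T(\rho,\alpha)$ by renormalizing the given bi-critical map $g$ around the critical point $c_1$, using Lemma~\ref{leminhacomb} to identify the renormalization trail with the $T$-orbit. The paper carries this out for $T^{n+1}$ in one stroke (rather than iterating a single step as you do), and is explicit about the gluing: it uses Lanford's procedure, identifying overlapping neighbourhoods of the endpoints of $I_{n+1}\cup I_n$ via the map $f^{q_n}$ itself, then pushing $f^{q_{n+1}}$ down to the quotient circle. One small point to sharpen in your write-up: the gluing map $f^{q_n}$ (or $g$ at level zero) has a critical point at $c_1$, so it is \emph{not} a diffeomorphism there --- the paper handles this by taking the coordinate $\phi:N\to S^1$ to be a diffeomorphism only away from $\pi(c_1)$, and the induced map $g$ acquires its non-flat critical point at $\hat c_1$ precisely from this. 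Your remark that ``only one iterate of $g$ is involved there'' understates this; the smoothness and the criticality at the glued point both come from the gluing map, not merely from the return map.
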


\begin{proof}[Proof of Lemma \ref{lemaATinv}] Let $(\rho,\alpha)\in\mathbb{A}$ and let $f$ be a $C^{\infty}$ bi-critical circle map, with critical points $c_1$ and $c_2$, such that $(\rho,\alpha)$ is the initial term of the renormalization trail of $c_2$ with respect to $c_1$ and $f$. For some fixed $n\in\nt$, we want to prove that $T^{n+1}(\rho,\alpha)\in\mathbb{A}$. By Lemma \ref{leminhacomb}, $T^{n+1}(\rho,\alpha)$ coincides with the $(n+1)$-th term $(\rho_{n+1},\alpha_{n+1})$ of the renormalization trail of $c_2$ (with respect to $c_1$ and $f$). Recall, from Section \ref{sectrails}, that $\rho_{n+1}=G^{n+1}(\rho)$ and that if $c_2$ belongs to the long interval $f^i\big(I_n(c_1)\big)$ for some $i\in\{0,1,...,q_{n+1}-1\}$, we have that$$\alpha_{n+1}=\frac{\mu\big((c_1,y_n)\big)}{\mu(I_n)}\,,$$where $y_n \in I_n(c_1)$ is given by $f^i(y_n)=c_2$. Otherwise, $c_2$ belongs to the short interval $f^j\big(I_{n+1}(c_1)\big)$ for some $j\in\{0,1,...,q_{n}-1\}$, and then$$\alpha_{n+1}=-\,\frac{\mu\big((y_n,c_1)\big)}{\mu(I_{n+1})}\,,$$where $y_n \in I_{n+1}(c_1)$ is given by $f^j(y_n)=c_2$. Let us assume that we are in the first case (the proof for the second one being the same), and note that the iterate $f^{q_n}$ restricts to a $C^{\infty}$ homeomorphism (with a critical point at $c_1$) between the intervals
$$
I_{n+1}(c_1) \cup f^{-q_{n+1}}\big(I_{n+1}(c_1)\big)=\big[f^{q_{n+1}}(c_1),f^{-q_{n+1}}(c_1)\big]\quad\mbox{and}
$$
$$\Delta_{0,n} \cup f^{-q_{n+1}}\big(\Delta_{0,n}\big)=\big[f^{q_{n+1}+q_n}(c_1),f^{-q_{n+1}+q_n}(c_1)\big],
$$
where $\Delta_{0,n}=f^{q_n}\big(I_{n+1}(c_1)\big)=\big(f^{q_{n+1}+q_n}(c_1),f^{q_n}(c_1)\big]$, as defined during the proof of Lemma \ref{leminhacomb}. Identifying points in this way we obtain from the interval$$I_{n+1}(c_1) \cup I_{n}(c_1) \cup f^{-q_{n+1}}\big(\Delta_{0,n}\big)=\big[f^{q_{n+1}}(c_1),f^{-q_{n+1}+q_n}(c_1)\big],$$a compact boundaryless one-dimensional topological manifold $N$. Denote by $\pi:I_{n+1}(c_1) \cup I_{n}(c_1) \cup f^{-q_{n+1}}\big(\Delta_{0,n}\big) \to N$ the quotient map, and let $\phi: N \to S^1$ be any homeomorphism which is a $C^{\infty}$ diffeomorphism between $N\setminus\{\pi(c_1)\}$ and $S^1\setminus\big\{\phi\big(\pi(c_1)\big)\big\}$. Note that $\phi\circ\pi$ maps the interior of $I_n(c_1)$ $C^{\infty}$-diffeomorphically onto $S^1\setminus\big\{\phi\big(\pi(c_1)\big)\big\}$. Let $g:S^1 \to S^1$ be given by the identity$$g \circ \phi \circ \pi=\phi \circ \pi \circ f^{q_{n+1}}\,\,\,\mbox{in $I_n(c_1)$,}$$and note that $g$ is a well-defined $C^{\infty}$ circle homeomorphism, with irrational rotation number equal to $\rho_{n+1}=G^{n+1}(\rho)$. Moreover, $g$ has exactly two critical points in $S^1$, given by $\hat{c}_1=\phi \circ \pi(c_1)$ and $\hat{c}_2=\phi \circ \pi(y_n)$. Finally, note that the unique invariant Borel probability measure $\mu_g$ of $g$ in $S^1$ is given by:$$\mu_g\big(\phi \circ \pi(A)\big)=\mu(A)/\mu\big(I_n(c_1)\big)=\mu(A)/\prod_{j=0}^{j=n}G^j(\rho)\,,$$for any Borel set $A \subset I_n(c_1)$. In particular, the two connected components of $S^1\setminus\{\hat{c}_1,\hat{c}_2\}$ have $\mu_g$\,-\,measures equal to $\alpha_{n+1}$ and $1-\alpha_{n+1}$ respectively. This finishes the proof of Lemma \ref{lemaATinv}.
\end{proof}

We remark that the \emph{glueing procedure} described in the proof of Lemma \ref{lemaATinv} was introduced by Lanford in the eighties, see \cite{lanford1,lanford2} for much more.

\medskip

Since the set $\mathbb{A}$ of admissible pairs is obviously non-empty (see for instance Lemma \ref{lemasquad} above), Theorem \ref{lemaadm} follows by combining Lemma \ref{Tcov} and
Lemma \ref{lemaATinv} with the following result.

\begin{prop}\label{lemaAab} The set $\mathbb{A}$ of admissible pairs has non-empty interior in $M$.
\end{prop}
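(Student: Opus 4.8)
The plan is to realize an open set of pairs $(\rho,\alpha)$ by perturbing the explicit bi-critical maps of Lemma \ref{lemasquad} inside a two-parameter $C^{\infty}$ family, and then to extract non-empty interior from a monotonicity-plus-intermediate-value argument. Let $\{f_{\lambda}\}_{\lambda\in[0,1]}$ be the real-analytic critical Arnold family $x\mapsto x+\lambda+\tfrac{1}{2\pi}\sin(2\pi x)$, so that each $f_\lambda$ has a single non-flat critical point and $\lambda\mapsto\rho(f_\lambda)$ is continuous, non-decreasing and surjective; fix $\lambda_0$ with $\alpha_0:=\rho(f_{\lambda_0})\in(0,\tfrac12)$ irrational, so that $f_{\lambda_0}\circ f_{\lambda_0}$ is bi-critical and realizes the pair $(2\alpha_0,\alpha_0)$ — this is precisely the computation behind Lemma \ref{lemasquad}. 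Now let $\{\xi_b\}_{|b|<b_0}$ be the time-$b$ flow of a smooth non-negative vector field supported in a small arc $W$ of regular points, and put $g_{\lambda,b}=\xi_b\circ f_{\lambda}\circ f_{\lambda}$. Since $\xi_b$ is a diffeomorphism and $(f_\lambda^{2})'$ vanishes exactly at $c(f_\lambda)$ and $f_\lambda^{-1}(c(f_\lambda))$, each $g_{\lambda,b}$ is a $C^{\infty}$ bi-critical circle map, with two non-flat critical points depending continuously on $\lambda$ and not on $b$; and $g_{\lambda_0,0}=f_{\lambda_0}^{2}$.

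I would then study $\Psi(\lambda,b)=\big(\rho(g_{\lambda,b}),\,\alpha(g_{\lambda,b})\big)$, where $\alpha(g_{\lambda,b})$ is the $\mu_{g_{\lambda,b}}$-measure of a continuously chosen one of the two arcs cut out by the critical points. This is defined and continuous wherever $\rho(g_{\lambda,b})\notin\Q$: the rotation number varies continuously in the $C^{0}$ topology, the unique invariant probability of a uniquely ergodic circle homeomorphism varies weakly continuously and has no atoms, and the critical points move continuously. As $\partial_\lambda(f_\lambda^{2})(x)=1+f_\lambda'(f_\lambda(x))>0$, the map $\lambda\mapsto\rho(g_{\lambda,b})$ is non-decreasing, and since $\rho(g_{\lambda_0,0})=2\alpha_0$ is irrational it is not locally constant at $(\lambda_0,0)$ (mode-locking occurs only at rational values). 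Shrinking the parameter box to a neighbourhood $Q$ of $(\lambda_0,0)$, reparametrize by $(\rho,b)$: for each $b$ and each $\rho$ in a fixed interval $J\ni 2\alpha_0$ there is $\lambda=\Lambda(\rho,b)$ with $\rho(g_{\Lambda(\rho,b),b})=\rho$, and $\Lambda$ can be taken jointly continuous (for irrational $\rho$ the level set of the monotone function $\lambda\mapsto\rho(g_{\lambda,b})$ is a single point).

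Writing $\widehat\alpha(\rho,b)=\alpha\big(g_{\Lambda(\rho,b),b}\big)$, which is jointly continuous, the image of $\Psi$ over $Q$ contains $\{(\rho,\widehat\alpha(\rho,b)):\rho\in J,\ |b|<b_0\}$. Suppose — this is the point discussed below — that $b\mapsto\widehat\alpha(2\alpha_0,b)$ is non-constant; as a continuous image of an interval it then takes every value in some non-degenerate interval $[m,M]\ni\alpha_0$, and by joint continuity and compactness, for all $\rho$ in a smaller interval $J'\ni 2\alpha_0$ the slice $b\mapsto\widehat\alpha(\rho,b)$ still covers $[m+\varepsilon,M-\varepsilon]$ for a fixed $\varepsilon>0$. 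Hence $\Psi$ covers the open rectangle $J'\times(m+\varepsilon,M-\varepsilon)\subset(0,1)^{2}$; since any preimage of a point with irrational first coordinate automatically lies in the domain of $\Psi$, every such point is admissible, and therefore $\mathbb{A}$ contains a non-empty open subset of $M$, as required.

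The crux, and the step I expect to be the main obstacle, is to verify that $b\mapsto\widehat\alpha(2\alpha_0,b)$ is genuinely non-constant — that along the fixed-rotation-number slice $\{\rho\equiv 2\alpha_0\}$ the measure-ratio actually moves. This cannot be read from a derivative of $\Psi$, because the rotation number is merely continuous; and since $\alpha$ is a topological-conjugacy invariant of a bi-critical map marked by its critical points, what must be shown is that the localized push $\xi_b$ cannot always be absorbed, up to conjugacy, by the compensating reparametrization $\Lambda(2\alpha_0,b)$. I would attack this by writing $\alpha$ as the normalized gap $h_{\lambda,b}(c_2)-h_{\lambda,b}(c_1)$, where $h_{\lambda,b}$ conjugates $g_{\lambda,b}$ to $R_{\rho(g_{\lambda,b})}$, using the real bounds (Theorem \ref{realbounds}) to control $h_{\lambda,b}$ near $c_1,c_2$ uniformly, and showing that a push supported in an arc $W$ disjoint from $c_1,c_2$ transports a definite amount of $\mu$-mass across one of the arcs, producing a change in that gap that the rigid compensating rotation cannot undo. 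An equivalent, more topological route is to compute the winding number of $\Psi|_{\partial Q}$ around $(2\alpha_0,\alpha_0)$ and invoke degree theory, which reduces to checking that the seed curve $b=0$, namely $\lambda\mapsto(2\rho(f_\lambda),\rho(f_\lambda))$ of Lemma \ref{lemasquad}, and the curve $b\mapsto\Psi(\lambda_0,b)$ leave $(2\alpha_0,\alpha_0)$ in non-parallel directions — i.e.\ the latter has slope $\neq\tfrac12$ at $b=0$.
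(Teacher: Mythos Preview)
Your proposal has a genuine gap, which you yourself identify as ``the crux'' but do not close: the non-constancy of $b\mapsto\widehat\alpha(2\alpha_0,b)$. Both of your suggested attacks are unconvincing. The mass-transport sketch is vague: the real bounds give uniform control of the conjugacies $h_{\lambda,b}$, but they do not preclude the compensating reparametrization $\lambda=\Lambda(2\alpha_0,b)$ from exactly undoing the effect of $\xi_b$ on the gap $h(c_2)-h(c_1)$; indeed, since $\alpha$ is a topological-conjugacy invariant and your perturbation $\xi_b$ is supported away from the critical set, there is no structural obstruction visible at this level. The winding-number route fares no better: you need to compare the ``slope'' of $b\mapsto\Psi(\lambda_0,b)$ at $b=0$ with $\tfrac12$, but the rotation number is only continuous (devil's-staircase in $\lambda$), so this curve need not have a well-defined tangent direction, and a degree argument requires information on all of $\partial Q$, not just at one point.

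The paper's proof sidesteps this difficulty entirely by choosing a two-parameter family in which the \emph{position} of the second critical point is itself a parameter. Concretely, one sets $\widetilde f_{t,s}(x)=x+\rho_0+s+\varphi(x)+\varphi(x-t)$ with $\varphi$ a small bump, so the critical points sit at $0$ and $t$. For each $t$ there is a unique $s=\psi(t)$ yielding rotation number $\rho_0$ (this is Lemma~\ref{lemGisagraph}), and then---here is the point---one estimates $\alpha_t$ by elementary orbit bounds: since $\phi_t(x)$ differs from $x+\rho_0$ by at most $3\delta$, the number of iterates of $0$ needed to traverse $[0,t]$ is roughly $t/\rho_0$, whence $\alpha_t\approx t$ (Lemma~\ref{lemverticals}). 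Letting $t$ run from near $0$ to near $1$ forces $\alpha_t$ to sweep across a definite interval, uniformly over $\rho_0\in(1/9,1/8)$. The variation of $\alpha$ is therefore built into the parametrization from the outset, and no transversality or degree computation is needed.
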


In order to prove Proposition \ref{lemaAab}, we need some preliminary constructions. Let $f$ be a smooth multicritical
circle map with irrational rotation number $\rho_f$, a unique invariant
Borel probability measure $\mu_f$ and with exactly two critical points $c_1$ and $c_2$ such that the two connected
components of $S^1\setminus\{c_1,c_2\}$ have $\mu_f$-measures equal to $\alpha_f$ and $1-\alpha_f$ respectively.
Denote by $\Delta_f$ the one whose measure equals $\alpha_f$, that is:$$\alpha_f=\int_{\Delta_f}\!d\mu_f\,.$$

By Birkhoff's Ergodic Theorem (combined with the unique ergodicity of $f$), we can write:
$$\alpha_f=\lim_{n\to+\infty}\left\{\frac{1}{n}\sum_{j=0}^{n-1}\chi_{\Delta_f}\big(f^{j}(x)\big)\right\}\quad\mbox{for any $x \in S^1$}\,,$$
where $\chi_{\Delta_f}$ is the \emph{characteristic} function of the open interval $\Delta_f$. By the well-known \emph{Denjoy-Koksma inequality} (see \cite[p.~73]{hermanIHES}), we have for any $x \in S^1$ and any $n\in\nt$ that:
\begin{equation}\label{DK}
\left|\alpha_f-\frac{1}{q_{n}}\sum_{j=0}^{q_{n}-1}\chi_{\Delta_f}\big(f^{j}(x)\big)\right|\leq\frac{\varia(\chi_{\Delta_f})}{q_n}=\frac{2}{q_n}\,,
\end{equation}
where, as before, $\{q_n\}_{n\in\nt}$ is the sequence of return times given by $\rho_f$, the rotation number of $f$.
Both parameters $\rho_f$ and $\alpha_f$ are continuous under $C^1$ perturbations. More precisely, we have the following lemma.

\begin{lemma}\label{lemacont} Given $\varepsilon>0$ there exists $\delta=\delta(\varepsilon,f)>0$ such that
if $g$ is a smooth multicritical circle map with irrational rotation number $\rho_g$, with exactly two critical points and
satisfying $d_{C^1}(f,g)<\delta$, then $|\rho_f-\rho_g|<\varepsilon$ and $|\alpha_f-\alpha_g|<\varepsilon$.
\end{lemma}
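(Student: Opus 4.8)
The plan is to prove the two continuity statements separately, starting with the easier one for the rotation number. The map $g\mapsto\rho_g$ is continuous on the space of circle homeomorphisms with the $C^0$ topology (hence a fortiori with the $C^1$ topology), a classical fact going back to Poincar\'e: rotation number depends continuously on the map. Thus given $\varepsilon>0$ there is $\delta_1>0$ so that $d_{C^0}(f,g)<\delta_1$ forces $|\rho_f-\rho_g|<\varepsilon$, and $d_{C^1}(f,g)<\delta_1$ certainly suffices. One should be slightly careful that $\rho_g$ be irrational as assumed in the statement; since that is part of the hypothesis on $g$, nothing extra is needed here.

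The continuity of $\alpha_g$ is the substantive part, and the Denjoy--Koksma inequality \eqref{DK} is precisely the tool designed for it. Fix a base point $x\in S^1$. Choose $n$ large enough that $2/q_n<\varepsilon/3$, where $q_n=q_n(\rho_f)$. For $g$ close to $f$ in the $C^1$ metric, the return times $q_n(\rho_g)$ agree with $q_n(\rho_f)$ up to level $n$ (continuity of $\rho$ gives stability of finitely many partial quotients, provided $\rho_f$ has no obstruction at level $n$; since $\rho_f$ is irrational its continued fraction is infinite and the first $n$ partial quotients vary continuously, so they are locally constant). Hence for such $g$ the inequality \eqref{DK} applied to $g$ yields $|\alpha_g-\frac{1}{q_n}\sum_{j=0}^{q_n-1}\chi_{\Delta_g}(g^j(x))|<\varepsilon/3$. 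The same inequality applied to $f$ gives $|\alpha_f-\frac{1}{q_n}\sum_{j=0}^{q_n-1}\chi_{\Delta_f}(f^j(x))|<\varepsilon/3$. It then remains to compare the two finite Birkhoff sums.

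For that comparison, note that the endpoints of $\Delta_g$ are the two critical points $c_1(g),c_2(g)$ of $g$, which depend continuously (indeed $C^0$, using the non-flatness and that critical points are the zeros of $Dg$) on $g$ in the $C^1$ topology; similarly the finitely many points $g^j(x)$, $0\le j\le q_n-1$, converge to $f^j(x)$ uniformly in $j\le q_n-1$ as $g\to f$ in $C^0$. Since the $f^j(x)$ are a finite set of points, none of which lies on the boundary $\{c_1(f),c_2(f)\}$ of $\Delta_f$ (we may assume $x$ is chosen so that its orbit avoids the critical points — or simply perturb $x$; alternatively note the boundary is a null set for $\mu_f$ and only finitely many iterates are involved, so generic $x$ works), there is a positive distance from $\{f^j(x):0\le j\le q_n-1\}$ to $\partial\Delta_f$. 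Therefore, for $g$ sufficiently $C^1$-close to $f$, each $g^j(x)$ lies in $\Delta_g$ if and only if $f^j(x)$ lies in $\Delta_f$, so the two Birkhoff sums are \emph{equal}. Combining the three estimates via the triangle inequality gives $|\alpha_f-\alpha_g|<\varepsilon$, and taking $\delta=\delta(\varepsilon,f)$ to be the minimum of the $C^1$-closeness thresholds from all the steps above completes the proof.

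The main obstacle is the bookkeeping around the base point $x$ and the boundary of $\Delta_f$: one must ensure that none of the finitely many relevant iterates $f^j(x)$ sits exactly on a critical point of $f$, and that the stability of $q_n$ under perturbation is legitimately invoked. Both are routine but need to be stated carefully; everything else is a direct application of \eqref{DK} together with continuity of $\rho_g$ and of the critical points in the $C^1$ topology.
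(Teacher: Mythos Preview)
Your proposal is correct and follows essentially the same approach as the paper: choose $n$ with $q_n$ large, pick $x$ whose first $q_n$ iterates avoid the critical points, use stability of the first $n$ partial quotients under small perturbation to get $q_n(\rho_g)=q_n(\rho_f)$, apply the Denjoy--Koksma inequality \eqref{DK} to both $f$ and $g$, and verify that the two finite Birkhoff sums coincide by continuity of the critical points and of finitely many iterates. The paper's version is slightly terser (it takes $q_{n_0}>4/\varepsilon$ rather than splitting into thirds) but the argument is the same.
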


It is well-known that the rotation number is continuous under $C^0$ perturbations, so the main point in the proof of Lemma \ref{lemacont} is to establish the continuity of $\alpha_f$.

\begin{proof}[Proof of Lemma \ref{lemacont}] Let $n_0\in\nt$ be large enough that $q_{n_0}>4/\varepsilon$, and
let $x \in S^1$ be such that $f^{j}(x)$ is a regular point of $f$ for all $j\in\{0,1,...,q_{n_0}-1\}$. Choose $\delta>0$ small enough in order to have the following property: if $g$ is a smooth multicritical circle map with irrational rotation number $\rho_g$, with exactly two critical points and satisfying $d_{C^1}(f,g)<\delta$, then:
\begin{itemize}
\item If $\rho_f=[a_0,a_1,...,a_{n_0},a_{n_0+1},...]$, then $\rho_g=[a_0,a_1,...,a_{n_0},b_{n_0+1},...]$; in particular, we have  $q_{n_0}(\rho_g)=q_{n_0}(\rho_f)=q_{n_0}$.
\item For all $j\in\{0,1,...,q_{n_0}-1\}$ we have: $\displaystyle f^{j}(x)\in\Delta_f \Leftrightarrow g^{j}(x)\in\Delta_g$\,.
\end{itemize}

Applying estimate \eqref{DK} above we obtain:
\begin{align*}
\big|\alpha_{f}-\alpha_{g}\big|&\leq\left|\alpha_{f}-\frac{1}{q_{n_0}}\sum_{j=0}^{q_{n_0}-1}\chi_{\Delta_f}\big(f^{j}(x)\big)\right|+\left|\alpha_{g}-\frac{1}{q_{n_0}}\sum_{j=0}^{q_{n_0}-1}\chi_{\Delta_g}\big(g^{j}(x)\big)\right|\\
&+\frac{1}{q_{n_0}}\sum_{j=0}^{q_{n_0}-1}\big|\chi_{\Delta_f}\big(f^{j}(x)\big)-\chi_{\Delta_g}\big(g^{j}(x)\big)\big|\\
&=\left|\alpha_{f}-\frac{1}{q_{n_0}}\sum_{j=0}^{q_{n_0}-1}\chi_{\Delta_f}\big(f^{j}(x)\big)\right|+\left|\alpha_{g}-\frac{1}{q_{n_0}}\sum_{j=0}^{q_{n_0}-1}\chi_{\Delta_g}\big(g^{j}(x)\big)\right|\\
&\leq\frac{4}{q_{n_0}}<\varepsilon\,.
\end{align*}
\end{proof}

\subsection{Two-parameter families}
Roughly speaking, the key to proving Proposition \ref{lemaAab} is to show that the set $\mathbb{A}$ of admissible pairs $(\rho,\alpha)$
intersects the fiber above each irrational number $\rho\in (0,1)$ in a ``long'' interval $J_\rho$.
Thus, we need for each such $\rho$ a (continuous) one-parameter family $\mathcal{G}_{\rho}$ of bi-critical circle maps
such that, for each $f\in \mathcal{G}_{\rho}$, we have $\rho(f)=\rho$ and $\{(\rho,\alpha_f):\, f\in \mathcal{G}_{\rho}\,\}= J_\rho$.
In order to accomplish this goal, we first build for each $\rho$ a special two-parameter family of bi-critical homeomorphisms
of the circle from which $\mathcal{G}_{\rho}$ will be extracted.

Let us start by fixing $\rho_0\in (0,1)\setminus \mathbb{Q}$. Let $a>0$ and $\delta>0$ be both much smaller than $\rho_0$ (how small they have to be will be determined in the course of the arguments). Let $\varphi_0: [-1,1]\to \mathbb{R}$ be a smooth function having the following properties:

\begin{itemize}
 \item $\mathrm{supp}(\varphi_0)\subset [-a/2,a/2]$;
 \item $\|\varphi_0\|_{C^0} = \delta$;
 \item $\varphi_0'(0)=-1$ and $|\varphi_0'(x)|<1$ for all $x\in [-1,1]\setminus \{0\}$;
 \item $\varphi_0''(0)\neq 0$.
\end{itemize}

The construction of a function $\varphi_0$ with these properties is an exercise using standard bump functions.
Now extend $\varphi_0$ so as to make it into a $\mathbb{Z}$-periodic function $\varphi:\mathbb{R}\to \mathbb{R}$,
{\it i.e.,} set $\varphi(x+n)=\varphi_0(x)$ for all $x\in [-1,1]$ and all $n\in \mathbb{Z}$.

Next, for $a\leq t\leq 1-a$ and $-\epsilon\leq s\leq \epsilon$, where $\epsilon>2\delta$ is still much smaller than $\rho_0$, define
$\widetilde{f}_{t,s}: \mathbb{R}\to \mathbb{R}$ by
\[
 \widetilde{f}_{t,s}(x)\;=\; x+ \rho_0+s+\varphi(x)+\varphi(x-t)\ .
\]
Then $\widetilde{f}_{t,s}$ is a smooth, orientation-preserving homeomorphism whose set of critical
points is equal to $\mathbb{Z}\cup (t+\mathbb{Z})$. By the last of the conditions on $\varphi_0$ above,
each critical point is non-flat. The quotient map $f_{t,s}$ on the circle (via the exponential map
$x\mapsto \exp(2\pi i x)$) is a bi-critical circle map (its critical points being $1=\exp(0)$ and $\exp(2\pi it)$).

Let us define $\mathcal{G}_{\rho_0}=\{(t,s)\in [a,1-a]\times[-\epsilon,\epsilon]\,:\; \rho(f_{t,s})=\rho_0\}$.

\begin{lemma}\label{lemGisagraph}
 The set $\mathcal{G}_{\rho_0}$ is the graph of a continuous function $t\mapsto \psi(t)$ defined
 on the interval $[a,1-a]$.
\end{lemma}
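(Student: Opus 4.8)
The plan is to fix $t\in[a,1-a]$ and study the one-parameter map $s\mapsto\rho(f_{t,s})$ on $[-\epsilon,\epsilon]$: I want to show it is continuous, non-decreasing, takes a value strictly below $\rho_0$ at $s=-\epsilon$ and strictly above $\rho_0$ at $s=\epsilon$, and is \emph{strictly} increasing at every parameter where its value happens to be irrational. Granting all this, the intermediate value theorem produces, for each $t$, a unique $s=\psi(t)$ with $\rho(f_{t,s})=\rho_0$ — existence from the sign change across $[-\epsilon,\epsilon]$, uniqueness from the strict monotonicity at the irrational value $\rho_0$ — so that $\mathcal{G}_{\rho_0}$ is exactly the graph of $\psi$; the continuity of $\psi$ then follows from a soft compactness argument.

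First I would dispose of the elementary ingredients. Since $\mathrm{supp}(\varphi_0)\subset[-a/2,a/2]$ and $a\le t\le 1-a$, the bumps $\varphi(x)$ and $\varphi(x-t)$ have disjoint supports modulo $1$, so $|\varphi(x)+\varphi(x-t)|\le\|\varphi_0\|_{C^0}=\delta$ for every $x$, whence $\rho_0+s-\delta\le\widetilde{f}_{t,s}(x)-x\le\rho_0+s+\delta$. Writing $\tau_c:x\mapsto x+c$ for the translation by $c$, the standard monotonicity of the rotation number under the pointwise order on lifts (if $F\le G$ then $\rho(F)\le\rho(G)$; in particular $F\le\tau_c\Rightarrow\rho(F)\le c$ and $F\ge\tau_c\Rightarrow\rho(F)\ge c$), together with $\epsilon>2\delta$, gives $\rho(f_{t,-\epsilon})\le\rho_0-\epsilon+\delta<\rho_0$ and $\rho(f_{t,\epsilon})\ge\rho_0+\epsilon-\delta>\rho_0$. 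Monotonicity in $s$ is immediate, since $\widetilde{f}_{t,s_1}=\widetilde{f}_{t,s_2}-(s_2-s_1)\le\widetilde{f}_{t,s_2}$ whenever $s_1<s_2$; and the joint continuity of $(t,s)\mapsto\rho(f_{t,s})$ follows from the $C^0$-continuity of $(t,s)\mapsto\widetilde{f}_{t,s}$ and of the rotation number. In particular the intermediate value theorem already provides, for each $t$, at least one $s$ with $\rho(f_{t,s})=\rho_0$.

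The crux of the argument — the step I expect to require the most care — is to rule out ``mode-locking'' at the irrational value $\rho_0$, i.e.\ to show that $s\mapsto\rho(f_{t,s})$ cannot be constant equal to $\rho_0$ on a nondegenerate interval. Suppose for contradiction that $\rho(f_{t,s_1})=\rho(f_{t,s_2})=\rho_0$ with $s_1<s_2$, and set $c=s_2-s_1>0$, so that $\widetilde{f}_{t,s_2}=\tau_c\circ\widetilde{f}_{t,s_1}$ exactly. Since $\rho_0$ is irrational and $f_{t,s_1}$ is a multicritical circle map in the sense of Definition \ref{defmccm}, Yoccoz's theorem \cite{yoccoz} provides a circle homeomorphism conjugating $f_{t,s_1}$ to the rigid rotation $R_{\rho_0}$; let $\widetilde{H}$ be a lift of it — an increasing homeomorphism of $\R$ commuting with integer translations and normalized so that $\widetilde{H}\circ\widetilde{f}_{t,s_1}=\tau_{\rho_0}\circ\widetilde{H}$. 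Then for every $y\in\R$,
\[
 \big(\widetilde{H}\circ\widetilde{f}_{t,s_2}\circ\widetilde{H}^{-1}\big)(y)\;=\;\widetilde{H}\big(\widetilde{f}_{t,s_1}(\widetilde{H}^{-1}(y))+c\big)\;>\;\widetilde{H}\big(\widetilde{f}_{t,s_1}(\widetilde{H}^{-1}(y))\big)\;=\;y+\rho_0,
\]
using that $\widetilde{H}$ is strictly increasing and $c>0$. The left-hand side minus $(y+\rho_0)$ is a continuous, $1$-periodic, strictly positive function of $y$, hence bounded below by some $c'>0$; therefore $\widetilde{H}\circ\widetilde{f}_{t,s_2}\circ\widetilde{H}^{-1}\ge\tau_{\rho_0+c'}$, and since the rotation number is a conjugacy invariant we obtain $\rho_0=\rho(f_{t,s_2})\ge\rho_0+c'>\rho_0$, a contradiction. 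Hence $\psi(t)$ is well defined and unique, and $\mathcal{G}_{\rho_0}=\{(t,\psi(t)):t\in[a,1-a]\}$.

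Finally, for the continuity of $\psi$ I would note that $\mathcal{G}_{\rho_0}$ is closed in the compact rectangle $[a,1-a]\times[-\epsilon,\epsilon]$ — being the intersection of that rectangle with the preimage of $\{\rho_0\}$ under the continuous map $(t,s)\mapsto\rho(f_{t,s})$ — and that a closed subset of a product $X\times Y$ with $Y$ compact which projects bijectively onto $X$ is automatically the graph of a continuous function: if $t_n\to t$ but $\psi(t_n)\not\to\psi(t)$, some subsequence of $\psi(t_n)$ would converge, by compactness of $[-\epsilon,\epsilon]$, to a point $y^\ast\ne\psi(t)$ with $(t,y^\ast)\in\mathcal{G}_{\rho_0}$, contradicting uniqueness. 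This gives the continuity of $\psi:[a,1-a]\to[-\epsilon,\epsilon]$ and completes the proof.
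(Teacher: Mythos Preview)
Your proof is correct and follows essentially the same two-step structure as the paper's: first establish that for each fixed $t$ there is a unique $s$ with $\rho(f_{t,s})=\rho_0$ (existence via the sign change at the endpoints, uniqueness via monotonicity in $s$), then deduce continuity of $\psi$ from the closed-graph argument. The only noteworthy difference is that the paper simply invokes ``continuity and monotonicity'' for the uniqueness, whereas you explicitly rule out a plateau at the irrational value $\rho_0$ --- your argument via Yoccoz's conjugacy theorem is valid, though heavier than needed (the classical fact that the rotation number of a translation family is strictly increasing at irrational values already follows from Poincar\'e's theory without appealing to the absence of wandering intervals).
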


\begin{proof}[Proof of Lemma \ref{lemGisagraph}]
 We divide the proof into two steps.
 \begin{enumerate}
  \item[(i)] {\emph{$\mathcal{G}_{\rho_0}$ is a graph.\/}}
  Note that for each $t\in [a,1-a]$ we have
  \[
  \widetilde{f}_{t,-2\delta}(x) < x+\rho_0-\delta\ , \ \ \textrm{for all}\ x\in \mathbb{R}\ ,
  \]
  whereas
  \[
  \widetilde{f}_{t,+2\delta}(x) > x+\rho_0+\delta\ , \ \ \textrm{for all}\ x\in \mathbb{R}\ .
  \]
  This shows that $\rho(f_{t,-2\delta})\leq \rho_0 -\delta$, whereas $\rho(f_{t,+2\delta})\geq \rho_0 +\delta$.
  Hence, by {\emph{continuity and monotonicity}} of $\rho(f_{t,s})$ as a function of $s$ (for each fixed $t$), it follows that
  there exists a {\emph{unique}} $s_t\in (-2\delta,+2\delta)$ such that $\rho(f_{t,s_t})=\rho_0$. We define
  $\psi(t)=s_t$. Thus we have proved that $\mathcal{G}_{\rho_0}=\{(t,\psi(t)): \, a\leq t\leq 1-a\}= \mathrm{Gr}(\psi)$.

  \item[(ii)] {\emph{The function $t\mapsto \psi(t)$ is continuous.\/}} As is well known, the rotation
  number $\rho(f_{t,s})$ is continuous as a function from $[a,1-a]\times[-\epsilon,\epsilon]$ to $\mathbb{R}/\mathbb{Z}$.
  In addition, we obviously have $\mathcal{G}_{\rho_0}=\rho^{-1}(\rho_0)$. Since $\{\rho_0\}\subset \mathbb{R}/\mathbb{Z}$
  is closed, so is $\mathcal{G}_{\rho_0}$. Hence the graph of $t\mapsto \psi(t)$ is closed, and this means
  precisely that $\psi(t)$ is continuous.
 \end{enumerate}

\end{proof}

\begin{lemma}\label{lemverticals}
 If $0<\rho_0<\frac{1}{8}$ is irrational, then $J_{\rho_0} \supset [2\rho_0,3\rho_0]$.
\end{lemma}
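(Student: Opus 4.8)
The plan is to realise $J_{\rho_0}$ as the continuous image of an interval and then to quote the intermediate value theorem. Recall that $J_{\rho_0}=\{\alpha_{f_{t,\psi(t)}}:\ t\in[a,1-a]\}$, where $\alpha_{f_{t,\psi(t)}}$ denotes the $\mu$-measure of the component of $S^1\setminus\{c_1,c_2\}$ lifting to $(0,t)\subset\R$, with $c_1=e^{0}$ and $c_2=e^{2\pi i t}$. Now $(t,s)\mapsto f_{t,s}$ is continuous into $C^1$, since the right-hand side of the formula defining $\widetilde f_{t,s}$ depends smoothly on $(t,s)$; and $\psi$ is continuous by Lemma \ref{lemGisagraph}. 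Hence Lemma \ref{lemacont} shows that $t\mapsto\alpha_{f_{t,\psi(t)}}$ is continuous, so that $J_{\rho_0}$ is a closed interval. It therefore suffices to exhibit parameters $t^{**},t^{*}\in[a,1-a]$ with $\alpha_{f_{t^{**},\psi(t^{**})}}=2\rho_0$ and $\alpha_{f_{t^{*},\psi(t^{*})}}=3\rho_0$. (The hypothesis $0<\rho_0<\tfrac18$ is used only to keep all quantities of size $\le 4\rho_0$ comfortably inside $[0,1)$ and to ensure $a_0=\lfloor 1/\rho_0\rfloor\ge 3$, which is what makes the first three iterates of a critical point occur in cyclic order around the circle.)

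The underlying mechanism is that, off the two bump supports -- whose total Lebesgue measure is at most $a$ -- the lift $\widetilde f_{t,s}$ is the pure translation $x\mapsto x+\rho_0+s$. Consequently, as long as the initial points $0$, $\widetilde f_{t,\psi(t)}(0)$ and $\widetilde f^{\,2}_{t,\psi(t)}(0)$ of the forward orbit of $0$ stay at distance $>a$ from $0$ and from $t$ modulo $1$, one gets the explicit formula $\widetilde f^{\,j}_{t,\psi(t)}(0)=j\rho_0+\tau_{j-1}(t)$ for $j\in\{1,2,3\}$, where $\tau_{j-1}(t)$ is a continuous function of $t$ -- an affine combination of $\psi(t)$ and the constant $\varphi_0(0)$ -- of size $O(\delta)$. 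I would then check that, after imposing $\delta\ll a\ll\rho_0$, this formula holds uniformly for all $t$ in a fixed neighbourhood of $3\rho_0$, and (separately) for all $t$ in a fixed neighbourhood of $2\rho_0$; indeed the orbit points involved then sit within $O(\delta)$ of $\rho_0$, $2\rho_0$ and $3\rho_0$, each of which is at distance $\gg a$ from $0$ and from such a $t$.

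Granting this, for the value $3\rho_0$ I would apply the intermediate value theorem to $G(t):=\widetilde f^{\,3}_{t,\psi(t)}(0)-t=3\rho_0+\tau_2(t)-t$ on the interval $[2\rho_0+2a,\,4\rho_0]\subset(a,1-a)$: since $|\tau_2|=O(\delta)$ we get $G(2\rho_0+2a)=\rho_0-2a+O(\delta)>0$ and $G(4\rho_0)=-\rho_0+O(\delta)<0$, so $G$ has a zero $t^{*}$. For $f=f_{t^{*},\psi(t^{*})}$ the second critical point $c_2=e^{2\pi i t^{*}}$ then equals $f^{3}(c_1)$, so the component of $S^1\setminus\{c_1,c_2\}$ lifting to $(0,t^{*})$ is, modulo endpoints, the union of the three fundamental domains $(c_1,f(c_1)]$, $(f(c_1),f^{2}(c_1)]$, $(f^{2}(c_1),f^{3}(c_1)]$, which are pairwise disjoint because $a_0\ge 3$. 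Each of these has $\mu$-measure $\rho(f)=\rho_0$ (the conjugacy to $R_{\rho_0}$ pushes $\mu$ to Lebesgue and sends such a domain onto an arc of length $\rho_0$), hence $\alpha_{f}=3\rho_0$. The construction of $t^{**}$ with $\alpha_{f_{t^{**},\psi(t^{**})}}=2\rho_0$ is identical, using $\widetilde f^{\,2}$ instead of $\widetilde f^{\,3}$: one finds $t^{**}$ near $2\rho_0$ with $\widetilde f^{\,2}_{t^{**},\psi(t^{**})}(0)=t^{**}$, so that the relevant component splits as $(c_1,f(c_1)]\cup(f(c_1),f^{2}(c_1)]$ and has $\mu$-measure $2\rho_0$.

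The one step that will require genuine care -- and which I expect to be the main obstacle -- is the second paragraph: determining precisely which orbit points of $0$ can be pushed into either bump, and how small $a$ and $\delta$ must be taken (relative to $\rho_0$, and to each other) so that the translation formula $\widetilde f^{\,j}_{t,\psi(t)}(0)=j\rho_0+\tau_{j-1}(t)$ is valid over the whole parameter interval on which the intermediate value theorem is run. Everything else -- continuity of $J_{\rho_0}$ via Lemma \ref{lemacont}, the identification of the relevant complementary component with a union of consecutive fundamental domains, and the identity $\mu\big((c_1,f(c_1)]\big)=\rho(f)$ -- is routine.
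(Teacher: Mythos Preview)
Your approach is correct and complete in outline, but it is genuinely different from --- and more delicate than --- the paper's argument. The paper never tries to hit the values $2\rho_0$ and $3\rho_0$ exactly. Instead it exploits the crude \emph{global} bound
\[
x+\rho_0-3\delta \;<\; \phi_t(x) \;<\; x+\rho_0+3\delta
\qquad\text{for \emph{all} } x,
\]
which follows at once from $\|\varphi\|_{C^0}=\delta$, the disjointness of the two bump supports, and $|s_t|<2\delta$. Iterating this gives $|\phi_t^k(0)-k\rho_0|<3k\delta$, with no need to track whether orbit points fall inside the bumps. The paper then evaluates $\alpha_t$ at the \emph{endpoints} $t=a$ and $t=1-a$ of the parameter interval: the bound yields $\alpha_a<2\rho_0$ and $\alpha_{1-a}>3\rho_0$ (the latter using $\rho_0<\tfrac18$, $a<\tfrac14$, $\delta<\tfrac{1}{48}$), and the intermediate value theorem then gives $[2\rho_0,3\rho_0]\subset J_{\rho_0}$ directly.

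Your route --- arranging $c_2=f^2(c_1)$ or $c_2=f^3(c_1)$ for suitably chosen parameters so that the complementary arc is an \emph{exact} union of two or three fundamental domains --- is a nice idea and yields the endpoints of $[2\rho_0,3\rho_0]$ on the nose. The price is exactly the bookkeeping you flag: one must check, for every $t$ in the IVT interval, that the first few forward iterates of $0$ avoid both bump neighbourhoods. This works with the hierarchy $\delta\ll a\ll\rho_0$ you impose, but it is extra work that the paper's uniform $3\delta$ bound sidesteps entirely. In short: both proofs use Lemma~\ref{lemacont} and the intermediate value theorem, but the paper trades precision for a much shorter argument by using the extreme parameters $t=a,\,1-a$ and a bound that is valid everywhere.
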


\begin{proof}[Proof of Lemma \ref{lemverticals}]
 We use the family $\{\widetilde{f}_{t,s}\}$ introduced above. This family also depends on the choice
 of the positive numbers $a$ and $\delta$; we shall
 make $a$ and $\delta$ as small as needed for the argument that follows to work.

 For each $a\leq t\leq 1-a$, let $s=s_t$, where $s_t$ is as in the proof of Lemma \ref{lemGisagraph}.
 Let us write $\phi_t=\widetilde{f}_{t,s_t}$, so that
 \[
  \phi_t(x)\;=\; x+\rho_0 + s_t + \varphi(x) + \varphi(x-t)\ .
 \]
 We henceforth identify, by an abuse of notation, $\phi_t:\mathbb{R}\to \mathbb{R}$
 with its quotient map on the circle $\mathbb{R}/\mathbb{Z}$. Note that, by construction, each $\phi_t$
 has rotation number equal to $\rho_0$.
 Since the functions $\varphi(x)$ and $\varphi(x-t)$ have disjoint supports and $C^0$ norms bounded by $\delta$,
 we have $|\varphi(x)+\varphi(x-t)|\leq \delta$. Also, by construction we have $s_t\in (-2\delta, 2\delta)$.
 Hence we see that
 \begin{equation}\label{onephit}
  x+\rho_0 -3\delta \;<\; \phi_t(x) \;<\; x+ \rho_0 + 3\delta\ .
 \end{equation}
From \eqref{onephit} it follows by induction that, for all $k \geq 0$,
\begin{equation}\label{twophit}
 x+k\rho_0 -3\delta k \;<\; \phi_t^k(x) \;<\; x+ k\rho_0 + 3\delta k\ .
\end{equation}
Next, let $\mu_t$ denote the unique Borel probability measure on $\mathbb{R}/\mathbb{Z}$ which is invariant under
$\phi_t$. Recall that, on the circle, the points $c_0=0$ and $c_t=t$ are the two critical points of $\phi_t$.
We will use \eqref{twophit} to estimate the measure of the segment $[c_0,c_t]$, {\it i.e.,\/}
$\alpha_t = \mu_t[0,t]$. The basic observation is that for each $x$ and each $k$, the fundamental domain
$[\phi_t^{k-1}(x)\,,\,\phi_t^k(x)]$ has $\mu_t$-measure equal to $\rho_0$. Now, there exists a unique $m\geq 1$
such that $[0,\phi_t^{m-1}(0)] \subseteq [0,t] \subset [0,\phi_t^m(0)]$. From these facts it follows that
\begin{equation}\label{threephit}
 (m-1)\rho_0 \;<\; \alpha_t \;<\; m\rho_0 \ .
\end{equation}
Also, we obviously have $\phi_t^{m-1}(0)\leq t<\phi_t^m(0)$. Using \eqref{twophit} with $k=m-1$ and with $k=m$,
we get
\[
 (m-1)(\rho_0-3\delta) \;\leq\; t \;<\; m(\rho_0+3\delta)\ .
\]
We rewrite this as
\begin{equation}\label{fourphit}
 \frac{t}{\rho_0+3\delta} \;<\; m \;\leq\; 1 + \frac{t}{\rho_0-3\delta} \ .
\end{equation}
Putting \eqref{fourphit} back into \eqref{threephit} , we get
\begin{equation}\label{fivephit}
 \rho_0 \left(\frac{t}{\rho_0+3\delta} - 1\right) \;<\; \alpha_t \;<\; \rho_0 \left( 1 + \frac{t}{\rho_0-3\delta}\right)\ .
\end{equation}
Now we have two special cases to consider, namely $t=a$ and $t=1-a$. In the first case, using the second inequality in
\eqref{fivephit} we have
$\alpha_a<2\rho_0$, provided $a$ and $\delta$ are so small that $a/(\rho_0-3\delta) < 1$.
In the second case, the first inequality in \eqref{fivephit} tells us that
\begin{equation}\label{sixphit}
 \alpha_{1-a} \;>\; \left(\frac{1-a}{\rho_0+3\delta} - 1\right) \rho_0 \ .
\end{equation}
It is straightforward to see that the right-hand side of \eqref{sixphit} will be $> 3\rho_0$ provided
$0<\rho_0<\frac{1}{8}$ and we take $a<\frac{1}{4}$ and $\delta<\frac{1}{48}$.
Summarizing, we have proved that $\alpha_a<2\rho_0 < 3\rho_0 < \alpha_{1-a}$
(provided $a$ and $\delta$ are sufficiently small). But by Lemma \ref{lemacont}, the function $t\mapsto \alpha_t$ is continuous.
Hence its image certainly contains the interval $[2\rho_0,3\rho_0]$. This proves that $J_{\rho_0}\supset [2\rho_0,3\rho_0]$, and
we are done.
\end{proof}

Proposition \ref{lemaAab} is an immediate consequence of this last lemma.

\begin{proof}[Proof of Proposition \ref{lemaAab}]
 By Lemma \ref{lemverticals}, we have
 \[
  \mathbb{A} \;\supset\; \bigcup_{\rho_0\in \left[\frac{1}{9},\frac{1}{8}\right]\setminus \mathbb{Q}} \{\rho_0\}\times [2\rho_0,3\rho_0]
  \;\supset\; \left( \left(\frac{1}{9},\frac{1}{8}\right)\setminus \mathbb{Q}\right) \times \left(\frac{1}{4}, \frac{1}{3}\right)\ .
 \]
Since this last rectangle is open in $M$, it follows that, indeed, $\mathbb{A}$ has non-empty interior in $M$.
\end{proof}

\appendix

\section{The skew-product $T$ is ergodic: proof of Proposition \ref{genorbdensas}}\label{apperg}

In  Section \ref{Secskew}, we considered the skew-product $T:M\to M$. 
Here, we enlarge it to get a self-map of the rectangle  $R=[0,1]\times [-1,1]$. It suffices to define the fiber maps $T_\rho:[-1,1]\to [-1,1]$ also for rational values of $\rho$. When $\rho\in [0,1]\cap \mathbb{Q}$ is not of the form $\rho=\frac{1}{n}$, we define $T_\rho$ using the same formulas given in \ref{SecFibermaps}. We also 
define $T_0\equiv 0$, and for each $n\in \mathbb{N}$, $T_{1/n}: [-1,1]\to [-1,1]$ by $T_{1/n}(\alpha)=-\alpha$ if $\alpha\in [-1,0]$ and 
$T_{1/n}(\alpha)=\{n(1-\alpha)\}$ if $\alpha\in (0,1]$. 
Hence we can define the extended skew-product $T:R\to R$  
by $T(\rho,\alpha)=(G(\rho), T_\rho(\alpha))$, where as before $G:[0,1]\to [0,1]$ is the Gauss map, and for each $\rho\in [0,1]$, $T_\rho:[-1,1]\to [-1,1]$. We note {\it en passant\/} that the composition of any two of these fiber maps (with $\rho\neq 0$) is expanding.

\medskip

Our main purpose in this appendix is to prove the following result.

\begin{theorem}\label{skewprodthm} The skew-product $T:R\to R$ admits a unique invariant Borel probability measure which is absolutely continuous with respect to the Lebesgue measure. This invariant measure is ergodic under $T$, and its support coincides with $R$.
\end{theorem}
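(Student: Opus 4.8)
The plan is to study the Perron--Frobenius (transfer) operator $\mathcal{L}$ of the extended skew-product $T\colon R\to R$ with respect to normalised Lebesgue measure $m$ on $R=[0,1]\times[-1,1]$, exploiting the fact that $T$ carries a natural countable Markov structure. For each $k\geq 1$ and each irrational $\rho\in\big(\tfrac1{k+1},\tfrac1k\big)$ the fiber map $T_\rho$ of Figure~\ref{markov} has exactly $k+2$ affine branches: one on $\{\alpha\le 0\}$, of slope $-1$; one on $\{0\le\alpha\le\rho\,G(\rho)\}$, of slope $-(\rho\,G(\rho))^{-1}$; and $k$ ``outer'' branches on $\{\rho\,G(\rho)\le\alpha\le 1\}$, each of slope of modulus $\rho^{-1}$. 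Partitioning each vertical strip $\{\rho\in(\tfrac1{k+1},\tfrac1k)\}$ accordingly, and letting $k$ range over $\nt$, produces a countable partition $\mathcal{Q}$ of $R$ --- modulo an $m$-null set which also absorbs the rational fibers --- such that $T$ maps each atom $Q\in\mathcal{Q}$ diffeomorphically onto one of the two ``super-atoms'' $U=[0,1]\times[0,1]$ and $V=[0,1]\times[-1,0]$. Moreover the induced transition relation on $\{U,V\}$ is primitive: $T$ carries $V$ onto $U$, and carries $U$ onto $U\cup V$, with the transition $U\to U$ realised through the outer branches and $U\to V$ through the middle branch. On each atom the Jacobian factorises as $\big|\det DT(\rho,\alpha)\big|=\rho^{-2}\,\big|T_\rho'(\alpha)\big|$, with $|T_\rho'|$ locally constant.

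\textbf{Step 1: two-step expansion and bounded distortion.} First I would record that $T^2$ is uniformly expanding in both coordinates. For the base this is the classical estimate $\big|(G^2)'(\rho)\big|=(\rho\,G(\rho))^{-2}>4$, valid for every irrational $\rho$ since $\rho\,G(\rho)=1-k\rho<\tfrac12$ on $(\tfrac1{k+1},\tfrac1k)$; for the fibers it is exactly the observation made just above that the composition of any two fiber maps $T_{\rho'}\circ T_{\rho}$ (with $\rho,\rho'\neq 0$) is expanding. Bounded distortion follows next: $\log|T_\rho'|$ is constant on each atom of $\mathcal{Q}$, while $\varia_{(\frac1{k+1},\frac1k)}\!\big(\log\rho^{-2}\big)=2\log\!\big(1+\tfrac1k\big)\le 2\log 2$ for all $k\geq 1$, so $\varia_{Q}\big(\log|\det DT|\big)$ is bounded by a universal constant for every $Q\in\mathcal{Q}$. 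Combined with the \emph{big-image property} --- each atom of $\mathcal{Q}$ is mapped onto $U$ or $V$, both of $m$-measure $\tfrac12$ --- this yields the usual uniform bounded-distortion bound for all inverse branches of every iterate $T^n$.

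\textbf{Step 2: existence, positivity, uniqueness and ergodicity.} With Step~1 in hand, $T$ falls within the scope of the Folklore Theorem for Markov maps satisfying a R\'enyi-type distortion condition and a big-image condition (see, e.g., \cite{dFdMbook} and the references therein). Concretely, I would establish a Lasota--Yorke inequality $\varia(\mathcal{L}^2\psi)\leq\theta\,\varia(\psi)+C\,\|\psi\|_{L^1(m)}$ with $\theta<1$ --- the passage to $\mathcal{L}^2$ being what allows one to use the two-step expansion of Step~1 --- and then apply the Ionescu-Tulcea--Marinescu/Hennion compactness argument to produce a nonnegative fixed density $h\in BV$ with $\mathcal{L}h=h$ and $\int_R h\,dm=1$; set $d\mu=h\,dm$. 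Bounded distortion together with the big-image property force $h$ to be bounded above and bounded away from $0$ on all of $R$, so that $\mu$ is \emph{equivalent} to $m$; in particular $\operatorname{supp}\mu=R$. Since the transition relation on $\{U,V\}$ is primitive and the distortion is uniform, a Hopf-type chain argument --- equivalently, the spectral-gap conclusion of the Lasota--Yorke analysis, equivalently the uniqueness clause of the Folklore Theorem --- shows that $1$ is a simple eigenvalue of $\mathcal{L}$ on $BV$ with no other eigenvalue on the unit circle. Hence $\mu$ is the \emph{unique} $T$-invariant probability measure absolutely continuous with respect to $m$, it is ergodic, and in fact exact (so mixing). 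This establishes Theorem~\ref{skewprodthm}. As by-products, the same Markov analysis shows that every subset of $R$ with non-empty interior is spread over all of $R$ by finitely many iterates of $T$ (Lemma~\ref{Tcov}); and, using this topological exactness together with the ergodicity and full support of $\mu$, the set of points with dense forward $T$-orbit is a dense $G_\delta$ set --- hence residual, and, by Birkhoff's theorem and the equivalence $\mu\sim m$, of full Lebesgue measure --- which is Proposition~\ref{genorbdensas}.

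\textbf{The main obstacle.} The delicate point is the \emph{uniformity} of all the constants across the infinitely many branches of the Gauss map, together with the non-compact geometry near $\rho=0$, where $|G'|\to\infty$, and near $\rho=1$, where $|G'|\to 1$ and no expansion is gained at the first step. These are precisely what the two-step estimates of Step~1 and the vanishing of $\varia_{(\frac1{k+1},\frac1k)}(\log\rho^{-2})$ as $k\to\infty$ are designed to handle; but carrying the Lasota--Yorke and bounded-distortion estimates through rigorously \emph{in this two-dimensional, non-product setting} --- rather than merely quoting a one-dimensional black box --- is the bulk of the technical work. A secondary subtlety is the bookkeeping along the boundary $\partial R$ and on the $m$-null set of rational fibers: immaterial for $\mu$, but to be accounted for when deducing the purely topological statements (Lemma~\ref{Tcov} and the residuality assertion in Proposition~\ref{genorbdensas}).
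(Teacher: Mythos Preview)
Your strategy via the transfer operator and the countable Markov partition is natural, and the partition you describe is exactly the one the paper uses (trapezoids $V_{k,\ell}$, triangles $U_k$, rectangles $R_k$). But Step~1 contains a genuine error. You assert that ``$\log|T_\rho'|$ is constant on each atom of $\mathcal{Q}$''; this is true only in the $\alpha$-direction, since each branch of $T_\rho$ is affine in $\alpha$, but it is \emph{false} in the $\rho$-direction. On the triangle atom $U_k=\{(\rho,\alpha):\tfrac{1}{k+1}<\rho<\tfrac1k,\ 0<\alpha<\rho G(\rho)\}$ one has $|T_\rho'(\alpha)|=(\rho G(\rho))^{-1}=(1-k\rho)^{-1}$, and as $\rho\nearrow 1/k$ inside the strip this diverges. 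Hence $\varia_{U_k}(\log|\det DT|)=+\infty$ for every $k$, the one-step R\'enyi condition fails, and the ``usual uniform bounded-distortion bound for all inverse branches of every iterate $T^n$'' does not follow from the ingredients you list. Passing to $T^2$ does not cure this: any $2$-tile whose \emph{second} step lands in a triangle atom again has unbounded $\log|\det DT^2|$, and the difficulty simply propagates.

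The paper meets this obstruction differently. Existence of an absolutely continuous invariant probability is obtained not via a Lasota--Yorke inequality but by an explicit computation exploiting the affine fiber structure (Lemma~\ref{lemadistTrho}): for any sequence $\{\theta_n\}$ one tracks $\lambda(\Psi_{\theta_0\cdots\theta_{n-1}}^{-1}(B))$ through a two-term linear recursion in the pair $(\ell_n,r_n)$ of masses on $[-1,0]$ and $[0,1]$, proves convergence with limit comparable to $\lambda(B)$, and then a Krylov--Bogolyubov averaging yields $\mu_T$ (Lemma~\ref{lemaacip}). For ergodicity the paper \emph{does} establish bounded Jacobian distortion of $T^n$ on every $n$-tile (Proposition~\ref{boundjacobdist}), but this is not a corollary of a one-step bound: the triangle-atom contribution at step $j$ produces the extra term $|\log\rho_{j+1}-\log\rho_{j+1}^*|$, and one must use \emph{simultaneously} that $|\rho_j-\rho_j^*|\le C\lambda^{n-j}$ (shrinking tiles) and that $|\rho_j-\rho_j^*|\le k_j^{-2}$ (same Gauss interval). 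The resulting sum $\sum_j k_j\min\{C\lambda^{n-j},k_j^{-2}\}$ is controlled by the elementary but decisive Lemma~\ref{simplelemma}, which gives $\sum_j k_j\min\{b_{n-j},k_j^{-2}\}\le\sum_j\sqrt{b_j}$. This interpolation between the two available bounds is precisely the idea your ``usual'' argument is missing. With distortion of iterates in hand, ergodicity and full support follow from a Lebesgue density-point argument (Lemmas~\ref{mass1}--\ref{mass2}, Corollary~\ref{coroergodicity}) rather than a spectral gap.
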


Contrary to what happens for one-dimensional maps, a piecewise smooth two-di\-men\-sional expanding map may not admit an absolutely continuous invariant measure; additional hypotheses are necessary (see for instance \cite{Buzzi, Ts2} and references therein). However, in our case the map $T$ is rather special. The fact that $T$ is a skew-product, combined with the fact that it is a Markov map (see below) which is affine on the fibers, allows us to reduce the problem to an essentially one-dimensional situation. Indeed, we start this appendix with the following useful property of the family of fiber maps defined in Section \ref{SecFibermaps}.

\begin{lemma}\label{lemadistTrho} Given any sequence $\{\theta_n\}_{n\in\nt}\subset[0,1]\setminus\Q$\, consider the sequence of compositions $\big\{\Psi_{\theta_{0}\cdots\,\theta_{n-1}}\big\}_{n \geq 1}$ in $[-1,1]$ given by:$$\Psi_{\theta_{0}\cdots\,\theta_{n-1}}=T_{\theta_0} \circ T_{\theta_1} \circ ... \circ T_{\theta_{n-1}}\quad\mbox{for all $n \geq 1$.}$$

Then for any given Borel set $B\subset[-1,1]$, the sequence $\big\{\lambda\big(\Psi_{\theta_{0}\cdots\,\theta_{n-1}}^{-1}(B)\big)\big\}_{n\in\nt}$ is convergent, where $\lambda$ denotes the Lebesgue measure on $[-1,1]$. Moreover:$$\theta_0\,G(\theta_0)\,\lambda(B)\leq\lim_{n\to+\infty}\big\{\lambda\big(\Psi_{\theta_{0}\cdots\,\theta_{n-1}}^{-1}(B)\big)\big\}\leq\big(2-\theta_0\,G(\theta_0)\big)\,\lambda(B)\,.$$
\end{lemma}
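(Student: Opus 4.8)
The plan is to track, along the backward orbit, only how $\lambda$ distributes over the two halves $[-1,0]$ and $[0,1]$ of the fibre; this turns the statement into a question about iterating a pair of affine maps of the first quadrant of $\R^2$ that contract uniformly in one direction. I would begin by recording the branch structure of a single fibre map. For $\theta\in[0,1]\setminus\Q$ put $a(\theta)=\lfloor 1/\theta\rfloor\in\nt$, so that $\theta G(\theta)=1-a(\theta)\theta$; since $a(\theta)\ge 1$ one checks directly that $a(\theta)\theta>\tfrac12$, whence the elementary but crucial fact that $\theta G(\theta)<\tfrac12$ for every irrational $\theta\in(0,1)$. Inspecting the definition of $T_\theta$, it has exactly: one branch sending $[-1,0]$ affinely onto $[0,1]$ with slope $-1$; one branch sending $[0,\theta G(\theta)]$ affinely onto $[-1,0]$; and $a(\theta)$ further branches, each of domain-length $\theta$ and with $|DT_\theta|\equiv 1/\theta$, whose domains tile $(\theta G(\theta),1]$ (here the identity $a(\theta)\theta=1-\theta G(\theta)$ is used) and each of which maps onto $[0,1]$. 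Writing $A^-=A\cap[-1,0]$, $A^+=A\cap[0,1]$ for a Borel set $A\subset[-1,1]$ and pulling back through these branches, whose images/domains are pairwise disjoint modulo endpoints, gives
\[
\lambda\bigl((T_\theta^{-1}A)^-\bigr)=\lambda(A^+),\qquad
\lambda\bigl((T_\theta^{-1}A)^+\bigr)=\theta G(\theta)\,\lambda(A^-)+\bigl(1-\theta G(\theta)\bigr)\,\lambda(A^+).
\]
This is the only place where the specific form of $T_\theta$ enters, and the bookkeeping in this step (the tiling of $(\theta G(\theta),1]$ and the disjointness of the preimage pieces) is the part I expect to need the most care.

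Next I would iterate. Since $\Psi_{\theta_0\cdots\theta_{n-1}}^{-1}=T_{\theta_{n-1}}^{-1}\circ\cdots\circ T_{\theta_0}^{-1}$, set $B_0=B$, $B_j=T_{\theta_{j-1}}^{-1}(B_{j-1})$, and $x_j=\lambda(B_j^-)$, $y_j=\lambda(B_j^+)$, $\beta_j=\theta_j G(\theta_j)\in(0,\tfrac12)$, so that the quantity to be studied is $\sigma_n:=\lambda\bigl(\Psi_{\theta_0\cdots\theta_{n-1}}^{-1}(B)\bigr)=x_n+y_n$. The displayed identity becomes $x_j=y_{j-1}$ and $y_j=\beta_{j-1}x_{j-1}+(1-\beta_{j-1})y_{j-1}$. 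Putting $\delta_j=y_j-x_j$ one gets $\delta_j=-\beta_{j-1}\delta_{j-1}$, hence $\delta_n=(-1)^n\bigl(\prod_{k=0}^{n-1}\beta_k\bigr)\delta_0$ with $|\delta_0|\le 1$, and $\sigma_{j+1}-\sigma_j=(1-\beta_j)\delta_j$. Because $\beta_k<\tfrac12$ for every $k$, this gives $|\sigma_{j+1}-\sigma_j|<2^{-j}$, so $\sum_j(\sigma_{j+1}-\sigma_j)$ converges absolutely and $\{\sigma_n\}$ converges; this is the first assertion.

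For the two-sided bound on the limit I would prove the stronger statement that $\beta_0\,\lambda(B)\le\sigma_n\le(2-\beta_0)\,\lambda(B)$ already for every $n\ge 1$, with $\beta_0=\theta_0 G(\theta_0)$. All the operations above are linear in $B$, so $\sigma_n=A_n\lambda(B^-)+C_n\lambda(B^+)$ for some $A_n,C_n\ge 0$, and taking $B=[-1,1]$ (for which $\Psi^{-1}$ preserves $\lambda$) yields $A_n+C_n=2$. Unrolling one step, $\sigma_n$ for the word $(\theta_0,\dots,\theta_{n-1})$ equals the same expression for the shifted word $(\theta_1,\dots,\theta_{n-1})$ evaluated at $T_{\theta_0}^{-1}(B)$; feeding in the displayed identity gives $A_n=\beta_0\,C_n'$, where $C_n'$ is the ``$+$''-coefficient attached to the shifted word, and $C_n=2-A_n$. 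Since $C_n'\le A_n'+C_n'=2$ we always get $A_n\le 2\beta_0$; and for $n\ge 2$, applying the same relation to the shifted word gives $C_n'=2-\beta_1 C_n''\ge 2-2\beta_1>1$, hence $A_n=\beta_0 C_n'\ge\beta_0$ (for $n=1$ one has $A_1=\beta_0$ outright). Thus $A_n\in[\beta_0,2\beta_0]$ and $C_n=2-A_n\in[2-2\beta_0,2-\beta_0]$ for all $n\ge 1$, and since $\beta_0<\tfrac12$ both intervals lie inside $[\beta_0,2-\beta_0]$. Consequently $\beta_0\,\lambda(B)\le A_n\lambda(B^-)+C_n\lambda(B^+)=\sigma_n\le(2-\beta_0)\,\lambda(B)$, and letting $n\to\infty$ gives the stated inequalities. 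Equivalently, the whole argument can be packaged via the transfer operator of $T_\theta$ restricted to $\mathrm{span}\{\mathbf 1_{[-1,0]},\mathbf 1_{[0,1]}\}$, which it preserves; the associated $2\times 2$ matrices are column-stochastic with second eigenvalue $-\theta G(\theta)$ of modulus $<\tfrac12$, which simultaneously yields the convergence and pins down the location of the limit.
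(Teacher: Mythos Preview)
Your proof is correct and follows essentially the same approach as the paper's: both track the pair $\bigl(\lambda(\Psi^{-1}(B)\cap[-1,0]),\,\lambda(\Psi^{-1}(B)\cap[0,1])\bigr)$ through the affine recursion $\ell_{n+1}=r_n$, $r_{n+1}=\beta_n\ell_n+(1-\beta_n)r_n$ with $\beta_n=\theta_nG(\theta_n)<\tfrac12$, and deduce convergence plus the stated bounds from the geometric contraction in the difference $r_n-\ell_n$. The only cosmetic difference is bookkeeping: the paper encodes the recursion via an auxiliary scalar sequence $\tau_n$ (so that $\ell_n=\tau_n\ell_0+(1-\tau_n)r_0$) and bounds the limit $\tau_\infty\in(\beta_0/2,\beta_0)$ directly, whereas you work with the increments $\delta_j$ and the coefficients $A_n,C_n$; your transfer-operator remark is exactly the $2\times2$ linear-algebra reformulation of the paper's computation.
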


\begin{proof}[Proof of Lemma \ref{lemadistTrho}] From the given sequence $\{\theta_n\}$ consider the sequence $\{\tau_n\}_{n\in\nt}\subset[0,1]$ given by:$$\tau_0=1,\,\,\tau_1=0\quad\mbox{and}\quad\tau_{n+2}=\theta_n\,G(\theta_n)\,\tau_n+\big(1-\theta_n\,G(\theta_n)\big)\,\tau_{n+1}\quad\mbox{for all $n\in\nt$.}$$
In other words, $\tau_2=\theta_0\,G(\theta_0)$ and:$$\tau_n=\theta_0\,G(\theta_0)+\sum_{j=1}^{n-2}(-1)^{j}\,\prod_{i=0}^{i=j}\theta_i\,G(\theta_i)\quad\mbox{for all $n \geq 3$.}$$
The sequence $\{\tau_n\}$ clearly converges to some number $\tau_{\infty}$, which satisfies\footnote{Remember here that $\theta\,G(\theta)\in(0,1/2)$ for any $\theta\in[0,1]\setminus\Q$ (if $\theta<1/2$ this is obvious since $0<G(\theta)<1$; if $\theta>1/2$, then $\theta\,G(\theta)=1-\theta$).}:$$0<\frac{\theta_0\,G(\theta_0)}{2}<\tau_{\infty}<\theta_0\,G(\theta_0)<\frac{1}{2}\,.$$
Given a Borel set $B\subset[-1,1]$ and $n\in\nt$ let $\ell_n$ and $r_n$ in $[0,1]$ be given by:$$\ell_n=\lambda\big(\Psi_{\theta_{0}\cdots\,\theta_{n-1}}^{-1}(B)\cap[-1,0]\big)\quad\mbox{and}\quad r_n=\lambda\big(\Psi_{\theta_{0}\cdots\,\theta_{n-1}}^{-1}(B)\cap[0,1]\big).$$
By definition of each $T_{\theta}$, the following relations hold for all $n\in\nt$:
\[
\begin{dcases}
\ell_{n+1}=r_n\\[0.4ex]
r_{n+1}=\theta_n\,G(\theta_n)\,\ell_n+\left\lfloor\frac{1}{\theta_n}\right\rfloor\theta_n\,r_n=\theta_n\,G(\theta_n)\,\ell_n+\big(1-\theta_n\,G(\theta_n)\big)\,r_n\\
\end{dcases}
\]
With this at hand, we easily obtain by induction that for all $n\in\nt$:
\[
\begin{dcases}
\ell_n=\tau_n\,\ell_0+(1-\tau_n)\,r_0\\[0.4ex]
r_n=\tau_{n+1}\,\ell_0+(1-\tau_{n+1})\,r_0\\
\end{dcases}
\]
In particular, the Lebesgue measure of $\Psi_{\theta_{0}\cdots\,\theta_{n-1}}^{-1}(B)$ in $[-1,1]$ is given by:$$\lambda\big(\Psi_{\theta_{0}\cdots\,\theta_{n-1}}^{-1}(B)\big)=(\tau_n+\tau_{n+1})\,\ell_0+\big(2-(\tau_n+\tau_{n+1})\big)\,r_0,$$which converges to $2\,\big(\tau_{\infty}\,\ell_0+(1-\tau_{\infty})\,r_0\big)$ as $n$ goes to infinity. This proves Lemma \ref{lemadistTrho}.
\end{proof}

With Lemma \ref{lemadistTrho} at hand we have the following result. 

\begin{lemma}\label{lemaacip} The skew product $T$ preserves a probability measure $\mu_T$ on the rectangle $R$ which is absolutely continuous (with respect to Lebesgue).
\end{lemma}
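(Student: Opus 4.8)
The plan is to produce $\mu_T$ by a Krylov--Bogolyubov averaging argument, starting from a carefully chosen reference measure, and to use Lemma \ref{lemadistTrho} to keep the densities of all the iterates uniformly bounded in $L^\infty$; the weak-$*$ limit of these densities will then be the density of the invariant measure.

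First I would fix notation for the fibre iterates. For $k\ge 1$ and $\rho\in[0,1]$, write $T^k(\rho,\alpha)=\big(G^k(\rho),\Phi_{k,\rho}(\alpha)\big)$, so that $\Phi_{k,\rho}=T_{G^{k-1}(\rho)}\circ T_{G^{k-2}(\rho)}\circ\cdots\circ T_{G(\rho)}\circ T_{\rho}$. Since $G$ maps irrationals to irrationals, for every irrational $\rho$ all the iterates $G^j(\rho)$, $0\le j\le k-1$, are irrational, so Lemma \ref{lemadistTrho} is available for the finite sequence $\theta_0=G^{k-1}(\rho),\ \theta_1=G^{k-2}(\rho),\dots,\ \theta_{k-1}=\rho$. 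Inspecting the proof of that lemma, one sees that the computation carried out there bounds each \emph{finite} composition, not just the limit: it yields $\lambda\big(\Phi_{k,\rho}^{-1}(B)\big)\le 2\,\lambda(B)$ for every Borel $B\subset[-1,1]$, with a constant independent of $k$ and of $\rho$ (here $\lambda$ denotes Lebesgue measure on $[-1,1]$, as in Lemma \ref{lemadistTrho}).

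Next I would run Krylov--Bogolyubov from the reference measure $m_0=\nu\times\tfrac12\lambda$, where $\nu$ is the Gauss measure on $[0,1]$; this is a probability measure on $R$, absolutely continuous with respect to the normalized Lebesgue measure $\Leb$ on $R$, with density at most $(\log 2)^{-1}$. Writing $A_\sigma=\{\beta:(\sigma,\beta)\in A\}$ for the $\sigma$-slice of a Borel set $A\subset R$, Fubini's theorem together with the fibre estimate above and the $G$-invariance of $\nu$ gives, for every $k\ge 0$,
$$(T^k)_*m_0(A)=\int_{[0,1]}\tfrac12\,\lambda\big(\Phi_{k,\rho}^{-1}(A_{G^k(\rho)})\big)\,d\nu(\rho)\ \le\ \int_{[0,1]}\lambda\big(A_{G^k(\rho)}\big)\,d\nu(\rho)=\int_{[0,1]}\lambda(A_\sigma)\,d\nu(\sigma)\ \le\ \frac{2}{\log 2}\,\Leb(A),$$
so that every $(T^k)_*m_0$ is absolutely continuous with density at most $2/\log 2$ (the set of $\rho$ with some rational iterate under $G$ is $\nu$-null and can be ignored). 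Hence the Cesàro averages $\mu_n=\frac1n\sum_{k=0}^{n-1}(T^k)_*m_0$ have densities $w_n=d\mu_n/d\Leb$ uniformly bounded in $L^\infty(\Leb)$; passing to a subsequence, $w_n\rightharpoonup w$ weakly-$*$ in $L^\infty(\Leb)=L^1(\Leb)^*$, and I set $\mu_T=w\,\Leb$, which is by construction a Borel probability measure, absolutely continuous with respect to Lebesgue. Invariance follows from the usual telescoping: for any bounded measurable $\phi$ one has $\phi,\phi\circ T\in L^1(\Leb)$, so $\int\phi\,d\mu_n\to\int\phi\,d\mu_T$ and $\int\phi\circ T\,d\mu_n\to\int\phi\circ T\,d\mu_T$, while $\int\phi\,d\mu_n-\int\phi\circ T\,d\mu_n=\tfrac1n\big(\int\phi\,dm_0-\int\phi\circ T^n\,dm_0\big)\to 0$; therefore $\int\phi\,d\mu_T=\int\phi\circ T\,d\mu_T$ for all such $\phi$, i.e.\ $T_*\mu_T=\mu_T$.

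The step requiring the most care is the uniform fibre estimate $\lambda\big(\Phi_{k,\rho}^{-1}(B)\big)\le 2\lambda(B)$: one must extract from the proof of Lemma \ref{lemadistTrho} that the relevant bound holds for every \emph{finite} composition and is uniform in the base point $\rho$, and one must verify that discarding the (null) set of $\rho$'s whose $G$-orbit meets the rationals does not affect the fibrewise integral. Everything else is a routine Krylov--Bogolyubov argument; the discontinuity of $T$ causes no difficulty precisely because the uniform $L^\infty$ bound on the densities of the $\mu_n$ supplies the compactness that one would otherwise obtain from continuity.
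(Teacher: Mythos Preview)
Your proof is correct and follows essentially the same route as the paper's: start from the reference measure $\nu\times\lambda$ (the paper does not normalize it, you do), use the fibre estimate from Lemma~\ref{lemadistTrho} to get the uniform bound $(T^k)_*m_0\le C\cdot\mathrm{Leb}$, form Ces\`aro averages, and pass to a weak-$*$ limit. Your treatment of invariance---testing against bounded measurable $\phi$ via the weak-$*$ convergence of densities in $L^\infty=(L^1)^*$---is in fact a bit more explicit than the paper's, which appeals to continuity of $T_*$ on absolutely continuous measures.
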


\begin{proof}[Proof of Lemma \ref{lemaacip}] We only sketch the arguments, as they are quite standard. As before, denote by $\nu$ and $\lambda$ the Gauss measure on $[0,1]$ and the Lebesgue measure on $[-1,1]$ respectively. Denote by $\mu$ the absolutely continuous (with respect to Lebesgue) Borel measure on the rectangle $R$ given by\, $\mu=\nu\times\lambda$\,. In other words, given a Borel set $A \subset R$ we have:$$\mu(A)=\int_{\pi_1(A)}\!\lambda_{\rho}(A)\,\,d\nu(\rho)\,,$$where $\pi_1:R\to[0,1]$ is the projection on the first coordinate given by $\pi_1(\rho,\alpha)=\rho$, and where $\lambda_{\rho}$ is the Lebesgue measure on the vertical fiber given by $\rho$, that is: $\lambda_{\rho}(A)=\lambda\big(A\cap(\{\rho\}\times[-1,1])\big)$ for any $\rho\in[0,1]$.

Given $n\in\nt$ and open intervals $I\subset[0,1]$ and $J\subset[-1,1]$ we label each point $\theta_{n-1}$ of $G^{-n}(I)$ with the $n$-tuple $\{\theta_0,...,\theta_{n-1}\}$ given by $G(\theta_0) \in I$ and $G(\theta_{i})=\theta_{i-1}$ for all $i\in\{1,...,n-1\}$. With this notation we can write:$$T^{-n}(I \times J)=\bigcup_{\substack{\{\theta_0,...,\theta_{n-1}\}\\G(\theta_0) \in I\,,\,G(\theta_{i})=\theta_{i-1}}}\big\{\theta_{n-1}\big\}\times\Psi_{\theta_{0}\cdots\,\theta_{n-1}}^{-1}(J)\,.$$
From Lemma \ref{lemadistTrho} we know that$$\lambda\big(\Psi_{\theta_{0}\cdots\,\theta_{n-1}}^{-1}(J)\big) \leq 2\,\lambda(J)$$holds for any $n$-tuple, and then:
\begin{align*}
\mu\big(T^{-n}(I \times J)\big)&=\int_{G^{-n}(I)}\!\lambda_{\rho}\big(T^{-n}(I \times J)\big)\,d\nu(\rho)\leq 2\,\lambda(J)\int_{G^{-n}(I)}\!d\nu(\rho)\\
&=2\,\lambda(J)\,\nu\big(G^{-n}(I)\big)=2\,\lambda(J)\,\nu(I)=2\,\mu(I \times J).
\end{align*}
With this at hand we deduce that:
\begin{equation}\label{eqestpullback}
\big(T^{n}_{*}\mu\big)(A) \leq 2\,\mu(A)\quad\mbox{for any Borel set $A \subset R$ and any $n\in\nt$.}
\end{equation}

Finally, consider the sequence of Borel measures on the rectangle $R$ given by$$\mu_n=\frac{1}{n}\sum_{j=0}^{n-1}T^{j}_{*}\mu\,.$$Since $T$ is a local diffeomorphism around Lebesgue almost every point in $R$, we deduce that the push-forward under $T$ of any absolutely continuous measure is also absolutely continuous and that, when restricted to absolutely continuous measures, the operator $T_{*}$ acts continuously in the weak* topology. Let $\omega$ be any weak* accumulation point of $\{\mu_n\}$ (recall that $\mu_n(R)=2$ for all $n$). By \eqref{eqestpullback}, $\omega(A) \leq 2\,\mu(A)$ for any Borel set $A \subset R$. Therefore, $\omega$ is absolutely continuous with respect to $\mu$, and then it is also absolutely continuous with respect to Lebesgue. In particular, the measure $\omega$ is a continuity point of $T_{*}$, which implies that it is $T$-invariant in the usual way. We conclude the proof of Lemma \ref{lemaacip} by taking the \emph{probability} measure $\mu_T=\frac{1}{2}\,\omega$.
\end{proof}

In order to prove Theorem \ref{skewprodthm}, it remains to prove that the absolutely continuous invariant probability measure $\mu_T$ given by Lemma \ref{lemaacip} is unique, supported on the whole rectangle $R$ and ergodic under $T$ (see Corollary \ref{coroergodicity} below).

\subsubsection*{A countable Markov partition} The skew-product $T$ admits a countable Markov partition that we presently describe. The basic (open) Markov atoms of the partition are of three different types (see figure \ref{tiles}):

\begin{enumerate}
\item The trapezoids $V_{k,\ell}$, with $k\in \mathbb{N}$ and $0\leq \ell \leq k-1$, given by 
\[
 V_{k,\ell} = \left\{(\rho,\alpha)\in R:\; \frac{1}{k+1}< \rho< \frac{1}{k}\;,\;1-(\ell+1)\rho<\alpha<1-\ell\rho\right\} \ ;
\]
 \item The triangles 
 \[
  U_k = \left\{(\rho,\alpha)\in R:\; \frac{1}{k+1}< \rho< \frac{1}{k}\;,\;0<\alpha<1-k\rho\right\}\ \ \ (k\in \mathbb{N})\ ;
 \]
 \item The rectangles 
 \[
  R_k=\left\{(\rho,\alpha)\in R:\; \frac{1}{k+1}< \rho< \frac{1}{k}\;,\;-1<\alpha<0\right\}\ \ \ (k\in \mathbb{N})\ .
 \]
\end{enumerate}

\begin{figure}[t]
\begin{center}~
\hbox to \hsize{\psfrag{0}[][][1]{$0$} \psfrag{x}[][][1]{$x$}
\psfrag{a}[][][1]{$\alpha$}
\psfrag{p}[][][1]{$\!\!\!\rho$}
\psfrag{1}[][][1]{$1$}
\psfrag{-1}[][][1]{$\!\!\!\!\!\!-1$}
\psfrag{k}[][][1]{$\frac{1}{k}$}
\psfrag{k1}[][][1]{$\!\!\!\!\frac{1}{k+1}$}
\psfrag{U}[][][1]{$U_k$}
\psfrag{V}[][][1]{$\;V_{k,\ell}$}
\psfrag{R}[][][1]{$R_k$}
\psfrag{0}[][][1]{$\!\!0$}
\psfrag{c}[][][1]{$\mathbf{\cdots}$}
\hspace{1.0em} \includegraphics[width=3.5in]{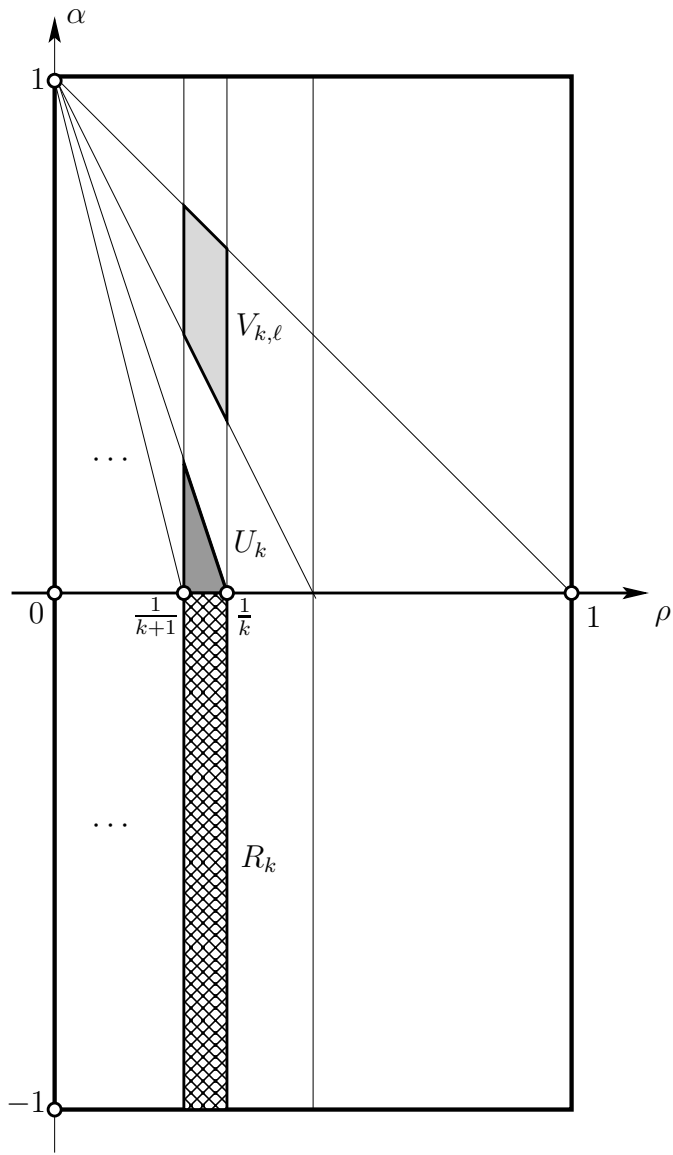}
   }
\end{center}
  \caption[markovtiles]{\label{tiles} The Markov partition for $T$ has three different types of atoms.}
\end{figure}

The map $T$ is one-to-one in each of these Markov atoms, mapping them diffeomorphically 
onto either $R^+=(0,1)\times (0,1)$ or 
$R^-=(0,1)\times (-1,0)$. More precisely, we have $T(U_k)=R^-$, $T(R_k)=R^+$ 
and $T(V_{k,\ell})=R^+$, for all $k$ and all $\ell$. 
The collection $\mathcal{P}$ of all such atoms is our {\it Markov partition\/} for $T$.

\subsubsection*{Markov tiles} Let us write 
\[
 \mathcal{P} = \left\{ W_1,W_2,\ldots,W_m,\ldots\right\}
\]
for an enumeration of the elements of the Markov partition $\mathcal{P}$. For each $m$, let 
$\tau_m:R^{\pm}\to W_m$ be the inverse branch of $T$  that takes $T(W_m)=R^{\pm}$ back onto 
$W_m$. Then $\tau_m$ is a smooth diffeomorphism and we have 
$\tau_m\circ T=\mathrm{id}_{W_m}$ and $T\circ \tau_m=\mathrm{id}_{R^{\pm}}$. 
An $n$-tuple $(m_1,m_2,\ldots,m_n)\in \mathbb{N}^n$ is said to be {\it admissible\/} if the composition $\tau_{m_1}\circ \tau_{m_2}\circ \cdots \tau_{m_n}$ is well-defined (as a map of $R^{\pm}$ into $R$). For each admissible $n$-tuple $(m_1,m_2,\ldots,m_n)\in \mathbb{N}^n$, we consider the region (polygon)
\[
 W_{m_1,m_2,\ldots,m_n}=\tau_{m_1}\circ \tau_{m_2}\circ \cdots \circ\tau_{m_n}(R^{\pm})\ .
\]
Such region is called a {\it Markov $n$-tile\/}. Note that 
$T(W_{m_1,m_2,\ldots,m_n})=W_{m_2,\ldots,m_n}$, so each Markov $n$-tile is mapped onto a Markov $(n-1)$-tile if $n\geq 2$, or onto $R^{\pm}$ if $n=1$. 

\begin{lemma}\label{markovtiles1}
 There exist constants $C>0$ and $0<\lambda <1$ such that, for every 
 Markov $n$-tile $W_{m_1,m_2,\ldots,m_n}$, we have 
 \[\mathrm{diam}(W_{m_1,m_2,\ldots,m_n})<C\lambda^n \ .\]
\end{lemma}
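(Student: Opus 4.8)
The plan is to estimate $\mathrm{diam}(W_{m_1,\dots,m_n})$ via the supremum over $R^{\pm}$ of the operator norm of $D\Psi$, where $\Psi=\tau_{m_1}\circ\cdots\circ\tau_{m_n}\colon R^{\pm}\to W_{m_1,\dots,m_n}$; since $R^{\pm}$ is a unit square, the mean value inequality gives $\mathrm{diam}(W_{m_1,\dots,m_n})\le\sqrt2\,\sup_{R^{\pm}}\|D\Psi\|$. Because $T$ is a skew-product, every $DT$, hence every $D\tau_{m_i}$, hence $D\Psi$ is lower-triangular: $D\Psi=\left(\begin{smallmatrix}\mathcal A&0\\ \mathcal B&\mathcal D\end{smallmatrix}\right)$, so $\|D\Psi\|\le|\mathcal A|+|\mathcal B|+|\mathcal D|$. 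Here $\mathcal A=\prod_iA_i$ is the product of the base-contractions, equal to $1/(G^n)'$ evaluated along the orbit; using $|(G^2)'(\rho)|=(\rho\,G(\rho))^{-2}>4$ — which holds because $\rho\,G(\rho)<\tfrac12$ for every irrational $\rho$ (the footnote to Lemma~\ref{lemadistTrho}) — one gets $|\mathcal A|\le 2^{1-n}$, and similarly $\big|\prod_{i<k}A_i\big|\le 2^{2-k}$. The quantity $\mathcal D=\prod_iD_i$ is the product of the fiber-contractions, equal to $\big(\prod_{j=0}^{n-1}|s_j|\big)^{-1}$, where $s_j$ is the slope of the branch of the $j$-th fiber map $T_{G^j(\rho)}$ selected by $m_{j+1}$. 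Finally $\mathcal B=\sum_{k=1}^{n}\big(\prod_{i<k}A_i\big)B_k\big(\prod_{i>k}D_i\big)$, and a short computation of the three types of inverse branch $\tau$ of $T$ (on the atoms $V_{k,\ell}$, $U_k$, $R_k$) shows the mixed entries satisfy $|B_k|\le 1$.

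The crux is to prove $\prod_{j=0}^{n-1}|s_j|\ge\tfrac14\,2^{n/3}$ (and, more generally, the same bound for a product over any block of $n$ consecutive steps), and this is where the combinatorics of the Markov partition is essential, since the fiber maps are \emph{not} uniformly expanding. One has $|s_j|=1$ on a rectangle branch $R_k$; $|s_j|=(\theta_j\,G(\theta_j))^{-1}>2$ on a triangle branch $U_k$; and $|s_j|=\theta_j^{-1}$ on a trapezoid branch $V_{k,\ell}$, which is $>2$ for $k\ge2$ but only $>1$ for $k=1$, where $\theta_j=G^j(\rho)$. So the ``bad'' steps, those with $|s_j|\le\tfrac32$, are exactly the rectangle steps and the $V_{1,0}$ steps with $\theta_j\in(\tfrac23,1)$. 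I would use three elementary transition facts: (i) a rectangle step is always preceded by a triangle step, because the triangle branch is the only branch whose image meets $(-1,0)$; (ii) a step following a trapezoid step is never a rectangle step, because trapezoid branches map into $[0,1)$; (iii) if $\theta_j=[1,1,\dots]$ then $\theta_j\in(\tfrac12,\tfrac23)$, so $|s_j|>\tfrac32$ — hence a $V_{1,0}$ step is bad only when the next partial quotient is $\ge2$, in which case $\theta_{j+1}=G(\theta_j)<\tfrac12$ and the next step (a triangle, or a trapezoid with index $\ge2$, but never a rectangle by (ii)) has $|s_{j+1}|>2$. Charging each bad step to a neighbouring ``big'' step ($|s|>2$) — to its predecessor if it is a rectangle, to its successor otherwise — each big step absorbs at most two charges, one from each side, so, writing $B$, $M$, $N$ for the numbers of steps with $|s_j|>2$, with $\tfrac32<|s_j|\le2$, and with $|s_j|\le\tfrac32$, we get $N\le 2B+2$. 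Together with $B+M+N=n$ this forces $B\ge(n-M-2)/3$, whence $\sum_j\log|s_j|\ge B\log2+M\log\tfrac32\ge\tfrac13(n-2)\log2$, using $\log\tfrac32>\tfrac13\log2$. This gives $|\mathcal D|\le 4\cdot2^{-n/3}$, and applying the same argument to a terminal block, $\big|\prod_{i>k}D_i\big|\le 4\cdot2^{-(n-k)/3}$.

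Assembling the bounds, $|\mathcal A|\le 2^{1-n}$, $|\mathcal D|\le 4\cdot2^{-n/3}$, and
\[
|\mathcal B|\;\le\;\sum_{k=1}^{n}2^{\,2-k}\cdot1\cdot4\cdot2^{-(n-k)/3}\;=\;64\cdot2^{-n/3}\sum_{k=1}^{n}2^{-2k/3}\;\le\;C_1\,2^{-n/3}\,,
\]
the point being that the base factor and the fiber factor \emph{both} decay geometrically in $k$ and in $n-k$ respectively, so the $n$-fold sum still decays like $2^{-n/3}$. Hence $\|D\Psi\|\le C_2\,2^{-n/3}$ pointwise on $R^{\pm}$, and therefore $\mathrm{diam}(W_{m_1,\dots,m_n})\le\sqrt2\,C_2\,2^{-n/3}$, which is the assertion of the lemma with $\lambda=2^{-1/3}$ and $C=\sqrt2\,C_2$.

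The main obstacle is exactly the second step: because the rectangle branches act as fiber-isometries and the $V_{1,0}$ trapezoid branches have fiber slope arbitrarily close to $1$, there is no uniform fiber expansion, and the usual ``uniform expansion $+$ Markov $\Rightarrow$ cylinders shrink geometrically'' reasoning does not apply; one has to exploit which branch can follow which (facts (i)--(iii)) to recover exponential fiber expansion. Once that is done, the control of the mixed entry $\mathcal B$ — i.e.\ the skew-coupling between base and fiber — comes essentially for free, since it only requires geometric decay of both the base and the fiber contractions along every block.
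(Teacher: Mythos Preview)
Your proof is correct, and in fact it is considerably more careful than the paper's. The paper disposes of this lemma in one line: ``This follows at once from the easily verifiable fact that the map $T^2=T\circ T$ is expanding.'' You are right to be suspicious of this claim. The composition $T^2$ is \emph{not} uniformly expanding in the Euclidean metric: take $\rho_0$ slightly larger than $\tfrac12$, $\alpha_0\in(-1,0)$ (a rectangle step), so that $\rho_1=G(\rho_0)$ is just below $1$ and the next step lands in $V_{1,0}$. Then the fibre diagonal of $DT^2$ equals $1/\rho_1$, which is arbitrarily close to $1$, and a direct computation shows the smallest singular value of $DT^2$ drops below $1$. So the obstacle you identify --- that the rectangle branches are fibre-isometries and the $V_{1,0}$ branches can have fibre slope arbitrarily close to $1$ --- is real, and some combinatorial bookkeeping of which branch can follow which is genuinely needed. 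Your charging argument (facts~(i)--(iii)) is exactly the right mechanism to recover exponential fibre growth, and together with the standard $(G^2)'>4$ bound on the base it yields the lemma.

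Two minor remarks. First, in your expression for the mixed entry $\mathcal B$ the roles of the $A_i$'s and $D_i$'s are interchanged: for a product $M_1\cdots M_n$ of lower-triangular $2\times 2$ matrices one gets $\mathcal B=\sum_k\bigl(\prod_{i<k}D_i\bigr)B_k\bigl(\prod_{i>k}A_i\bigr)$, not the other way around. This is harmless for your bound, since both partial products decay geometrically and the resulting sum is still $O(2^{-n/3})$. Second, your claim $|B_k|\le 1$ for the mixed entry of each $D\tau$ deserves a line of justification; it does hold (on $R_k$ the entry is $0$; on $V_{k,\ell}$ it equals $(1-\alpha)\rho\le 1$; on $U_k$ it equals $k\alpha\rho^2/(1-k\rho)\le k\rho^2<1$), but this should be stated.

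In short: the paper's proof is at best a gesture towards an argument in some adapted metric; your proof actually establishes the lemma.
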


\begin{proof}[Proof of Lemma \ref{markovtiles1}] This follows at once from the easily verifiable fact that the map $T^2=T\circ T$ is 
 expanding. 
\end{proof}

We denote by $\mathcal{W}$ the collection of all Markov tiles, and for each $n$ we denote by $\mathcal{W}^{(n)}$ the collection of all Markov $n$-tiles, so that $\mathcal{W}=\bigcup_{n\in\mathbb{N}}\mathcal{W}^{(n)}$. The following easily proven facts are worth keeping in mind here:
\begin{enumerate}
\item[(MT1)] For each $n$ the elements of $\mathcal{W}^{(n)}$ are pairwise disjoint open subsets of $R$;
\item[(MT2)] For each $n$ the complement of $\bigcup_{W\in \mathcal{W}^{(n)}}W$ 
in $R$ is a Lebesgue null-set;
\item[(MT3)] The union $\bigcup_{W\in \mathcal{W}} \partial W$ is a Lebesgue null-set;
\item[(MT4)] For each open subset $A\subseteq R$, there exists a collection 
$\mathcal{C}_A\subseteq \mathcal{W}$ of pairwise disjoint Markov tiles such 
that $A\setminus \bigcup_{W\in \mathcal{C}_A} W$ has zero Lebesgue measure. 
\end{enumerate}

Note that Lemma \ref{Tcov} follows at once from the fact that any given open set in $R$ contains the closure of an $n$-tile (and then it eventually covers the whole rectangle under iteration of $T$).

\subsubsection*{Bounding Jacobian distortion} One path towards proving ergodicity of $T$ is to show that the Jacobians of all inverse branches of iterates of $T$ have uniformly bounded distortion. This 
follows from Proposition \ref{boundjacobdist} below. In the proof, we will need the following  simple lemma. 

\begin{lemma}\label{simplelemma} Let $k_j>0$, $b_j\geq 0$ ($j\geq 0$) be two sequences of real numbers, and assume that $B=\sum_{j=0}^{\infty} \sqrt{b_j}<\infty$. Then for each $n\in \mathbb{N}$ we have
 \begin{equation}\label{lamelem}
  \sum_{j=0}^{n} k_j\min\left\{b_{n-j}\,,\,{k_j}^{-2}\right\} \leq B\ .
 \end{equation}
\end{lemma}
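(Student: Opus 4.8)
The plan is to reduce \eqref{lamelem} to a term-by-term bound, using only the elementary observation that for non-negative reals $a,c$ one has $\min\{a,c\}\le\sqrt{ac}$ (since $\min\{a,c\}^2\le ac$). This single inequality, applied judiciously, collapses the whole sum.

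Concretely, I would fix $n$ and estimate each summand separately. For each $j$ with $0\le j\le n$, apply the inequality above with $a=b_{n-j}$ and $c=k_j^{-2}$ to get
\[
\min\left\{b_{n-j}\,,\,{k_j}^{-2}\right\}\;\le\;\sqrt{b_{n-j}\,{k_j}^{-2}}\;=\;\frac{\sqrt{b_{n-j}}}{k_j}\,,
\]
using here that $k_j>0$. Multiplying through by $k_j>0$ gives the clean bound
\[
k_j\min\left\{b_{n-j}\,,\,{k_j}^{-2}\right\}\;\le\;\sqrt{b_{n-j}}\,.
\]

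Summing this over $j=0,1,\ldots,n$ and then performing the change of index $i=n-j$ yields
\[
\sum_{j=0}^{n} k_j\min\left\{b_{n-j}\,,\,{k_j}^{-2}\right\}\;\le\;\sum_{j=0}^{n}\sqrt{b_{n-j}}\;=\;\sum_{i=0}^{n}\sqrt{b_i}\;\le\;\sum_{i=0}^{\infty}\sqrt{b_i}\;=\;B\,,
\]
which is exactly \eqref{lamelem}. Note that the hypothesis $B<\infty$ is used only to make the final bound meaningful (the partial sums are automatically dominated by $B$ once the series of square roots converges); in fact the argument shows the sharper partial bound $\sum_{i=0}^{n}\sqrt{b_i}$.

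I do not anticipate any genuine obstacle here: the entire content is the scalar inequality $\min\{a,c\}\le\sqrt{ac}$ together with a harmless re-indexing, and the positivity of the $k_j$ is what allows the $k_j$ factors to cancel cleanly. The only thing to be careful about is that the $\min$ is evaluated at $b_{n-j}$ (not $b_j$), so the re-indexing step must reverse the order of the finite sum before comparing with the tail $\sum_{i\ge 0}\sqrt{b_i}$.
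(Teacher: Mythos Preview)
Your proof is correct and follows essentially the same approach as the paper: both establish the term-by-term bound $k_j\min\{b_{n-j},k_j^{-2}\}\le\sqrt{b_{n-j}}$ and then sum. The only cosmetic difference is that the paper obtains this bound via an explicit case split on which argument of the minimum is smaller, whereas you package the same computation as the single inequality $\min\{a,c\}\le\sqrt{ac}$.
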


\begin{proof}[Proof of Lemma \ref{simplelemma}] For each $1\leq j\leq n$, there are only two possibilities:
 \begin{enumerate}
  \item[(i)] $k_j^{-2}< b_{n-j}$: In this case we have
  \[
  k_j\min\{b_{n-j},k_j^{-2}\} = k_j^{-1} < \sqrt{b_{n-j}}\ .
  \]
  
  \item[(ii)] $k_j^{-2}\geq  b_{n-j}$: In this case we have
  \[
  k_j\min\{b_{n-j},k_j^{-2}\} = k_j b_{n-j} \leq (b_{n-j}^{-1})^{\frac{1}{2}}b_{n-j} = \sqrt{b_{n-j}}\ .
  \]
  
 \end{enumerate}
 From (i) and (ii) it follows that the sum in the left-hand side of \eqref{lamelem} 
 is bounded by $\sum_{j=0}^{n} \sqrt{b_{n-j}} \leq B$. 

\end{proof}

\begin{prop}\label{boundjacobdist}
 There exists a constant $K>1$ for which the following holds for all 
 $n\in \mathbb{N}$. If $(\rho_0,\alpha_0)$ and $(\rho_0^*,\alpha_0^*)$ are any two points in the same 
 Markov $n$-tile, then 
 \begin{equation}\label{jacobratio}
  \frac{1}{K}\leq \left|\frac{\det DT^n(\rho_0,\alpha_0)}{\det DT^n(\rho_0^*,\alpha_0^*)}\right| \leq K\ .
 \end{equation}
\end{prop}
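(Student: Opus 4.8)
The plan is to exploit the skew-product structure so that the Jacobian of $T^{n}$ splits into a \emph{base} factor, governed by iterates of the Gauss map, and a \emph{fibre} factor, governed by the piecewise-affine maps $T_{\rho}$, and then to control the two pieces separately. First I would record the factorization. Since $T$ is a skew product, $DT$ is lower triangular, so with $(\rho_{i},\alpha_{i})=T^{i}(\rho_{0},\alpha_{0})$ we get $\det DT^{n}(\rho_{0},\alpha_{0})=\prod_{i=0}^{n-1}G'(\rho_{i})\,\partial_{\alpha}T_{\rho_{i}}(\alpha_{i})=(G^{n})'(\rho_{0})\cdot\mathcal{J}_{n}(\rho_{0})$, where $\mathcal{J}_{n}(\rho_{0})=\prod_{i=0}^{n-1}\bigl|\partial_{\alpha}T_{\rho_{i}}(\alpha_{i})\bigr|$. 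The key point is that on a fixed Markov $n$-tile $W_{m_{1},\dots,m_{n}}$ the combinatorics are frozen: at step $i$ the orbit lies in a prescribed atom (of type $R_{k}$, $U_{k}$, or $V_{k,\ell}$), hence in a prescribed affine branch of $T_{\rho_{i}}$ whose $\alpha$-derivative equals $1$, $1/(\rho_{i}G(\rho_{i}))$, or $1/\rho_{i}$ respectively --- in particular independent of $\alpha_{0}$. Thus on the tile $\det DT^{n}$ is a function of $\rho_{0}$ alone, and $\rho_{0},\rho_{0}^{*}$ both lie in the rank-$n$ Gauss cylinder $[a_{0},\dots,a_{n-1}]$; it therefore suffices to bound $\bigl|\log|(G^{n})'(\rho_{0})|-\log|(G^{n})'(\rho_{0}^{*})|\bigr|$ and $\bigl|\log\mathcal{J}_{n}(\rho_{0})-\log\mathcal{J}_{n}(\rho_{0}^{*})\bigr|$ uniformly.

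For the base factor, since $\rho_{0}$ and $\rho_{0}^{*}$ lie in a common rank-$n$ Gauss cylinder, the required bound on $\bigl|\log|(G^{n})'(\rho_{0})|-\log|(G^{n})'(\rho_{0}^{*})|\bigr|$ is the classical bounded-distortion property of the Gauss map; alternatively it follows by summing $\sum_{j<n}\bigl|\log(\rho_{j}/\rho_{j}^{*})\bigr|$ using, on one hand, $|\rho_{j}-\rho_{j}^{*}|<C\lambda^{n-j}$ from Lemma \ref{markovtiles1}, and on the other $|\rho_{j}-\rho_{j}^{*}|<a_{j}^{-2}$ together with $\rho_{j}>1/(a_{j}+1)$, from the fact that both points lie in $(1/(a_{j}+1),1/a_{j})$.

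For the fibre factor, taking logarithms gives $\log\mathcal{J}_{n}(\rho_{0})=\sum_{i\in A}(-\log\rho_{i})+\sum_{i\in B}(-\log\rho_{i}-\log\rho_{i+1})$, where $A$ indexes the steps at which the orbit visits a $V$-atom and $B$ those at which it visits a $U$-atom (the $R$-atoms contribute $0$, since there $T_{\rho}$ is an isometry). Hence $\bigl|\log\mathcal{J}_{n}(\rho_{0})-\log\mathcal{J}_{n}(\rho_{0}^{*})\bigr|\le 2\sum_{j}\bigl|\log(\rho_{j}/\rho_{j}^{*})\bigr|$, and combining the two estimates above into $|\rho_{j}-\rho_{j}^{*}|<\min\{C\lambda^{n-j},a_{j}^{-2}\}$ yields $\bigl|\log(\rho_{j}/\rho_{j}^{*})\bigr|<2a_{j}\min\{C\lambda^{n-j},a_{j}^{-2}\}$. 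I would then invoke Lemma \ref{simplelemma} with $k_{j}=a_{j}$ and $b_{m}=C\lambda^{m}$ (so that $\sum_{m}\sqrt{b_{m}}<\infty$ because $\lambda<1$) to bound the whole sum by a universal constant. Putting the base and fibre bounds together produces the constant $K$ in \eqref{jacobratio}.

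The main obstacle is the contribution of the $U$-atoms, i.e. the branch $\alpha\mapsto-\alpha/(\rho G(\rho))$, where $\det DT=1/(\rho^{3}G(\rho))$ degenerates as $G(\rho)\to 0$ (that is, $\rho\to 1/a_{0}$). One must verify that such a step contributes the term $-\log\rho_{i+1}=-\log G(\rho_{i})$ with $\rho_{i+1},\rho_{i+1}^{*}$ still confined to a sufficiently thin portion of $(1/(a_{i+1}+1),1/a_{i+1})$ --- equivalently, that the passage through a triangular atom is controlled by the diameter of the tile one level deeper --- and this has to be handled with care precisely when an $n$-tile terminates in a $U$-atom, where the naive cylinder estimate on $\rho_{i+1}$ is weakest. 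It is exactly to absorb this kind of term, where one has a competing ``derivative bound'' and ``geometric (tile-diameter) bound'', that Lemma \ref{simplelemma} with its $\min$ is needed, rather than a term-by-term estimate.
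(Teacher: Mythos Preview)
Your approach mirrors the paper's exactly: the same lower-triangular factorization of $DT^n$, the same pair of estimates $|\rho_j-\rho_j^*|<\min\{C\lambda^{n-j},\,k_j^{-2}\}$ coming from Lemma~\ref{markovtiles1} and from common Gauss-cylinder membership, the same mean-value bound $|\log\rho_j-\log\rho_j^*|\le(k_j+1)|\rho_j-\rho_j^*|$, and the same summation via Lemma~\ref{simplelemma}. Your remark that on a fixed tile the Jacobian is a function of $\rho_0$ alone is a tidy way of phrasing what the paper records by listing the three possible values of $T_{\rho_j}'(\alpha_j)/T_{\rho_j^*}'(\alpha_j^*)$.

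The obstacle you flag in your last paragraph is real, and the paper's write-up glosses over it just as yours does. When the terminal atom $W_{m_n}$ is a triangle $U_k$, the fibre contribution at step $n-1$ introduces the term $|\log\rho_n-\log\rho_n^*|$; but for an $n$-tile one has $T^n(W)=R^-$ and hence no common Gauss interval for $\rho_n,\rho_n^*$, so the paper's assertion that a well-defined $k_j$ exists for all $0\le j\le n$ is not justified at $j=n$. Concretely, on the $1$-tile $U_k$ itself the ratio in \eqref{jacobratio} equals $\rho_0^{*2}(1-k\rho_0^*)\big/\rho_0^{2}(1-k\rho_0)$, which is unbounded as $\rho_0\to 1/k$. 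One clean repair, sufficient for the downstream Lemmas~\ref{mass1}--\ref{mass2}, is to restrict the statement to $n$-tiles with $T^n(W)=R^+$ (terminal atom $R_k$ or $V_{k,\ell}$), where the fibre sum stops at $j=n-1$ and no $\rho_n$ term ever appears; alternatively, prove \eqref{jacobratio} for $T^n$ on Markov $(n{+}1)$-tiles, so that $\rho_n,\rho_n^*$ share a Gauss interval and your argument (and the paper's) goes through verbatim.
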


\begin{proof}[Proof of Proposition \ref{boundjacobdist}] First, some preliminary considerations. 
 For definiteness, let $W_{m_1,m_2,\ldots,m_n}$ be the Markov $n$-tile containing the two points $(\rho_0,\alpha_0)$ and $(\rho_0^*,\alpha_0^*)$. Let us write, for 
 $j=1,2,\ldots$, $(\rho_j,\alpha_j)=T^j(\rho_0,\alpha_0)$ and $(\rho_j^*,\alpha_j^*)=T^j(\rho_0^*,\alpha_0^*)$. From the definition of our skew-product, we see that
 \begin{equation}\label{xyskew}
  \left\{\begin{array}{lll}
          \rho_j &= & G^j(\rho_0) \\
          \alpha_j &= & T_{\rho_{j-1}}\circ T_{\rho_{j-2}}\circ \cdots\circ T_{\rho_0}(\alpha_0)
         \end{array}
 \right. \,
 \end{equation}
 and similar formulas hold for $\rho_j^*,\alpha_j^*$. 
Note also that  
 $(\rho_j,\alpha_j),(\rho_j^*,\alpha_j^*) \in W_{m_{j+1},\ldots ,m_n}$ for each $0\leq j\leq n$. Hence, by Lemma \ref{markovtiles1}, for each such $j$ we have
 \[
  |\rho_j-\rho_j^*| \leq \mathrm{diam}(W_{m_{j+1},\ldots,m_n}) < C\lambda^{n-j}
 \]
 Next, for each $0\leq j\leq n$, let $k_j$ be the unique natural number such that $\frac{1}{k_j+1}<\rho_j,\rho_j^*<\frac{1}{k_j}$, so that $|\rho_j-\rho_j^*| <\frac{1}{k_j^2}$. Combining these two estimates, we can write
 \begin{equation}\label{xestimate}
   |\rho_j-\rho_j^*| < \min\left\{ C\lambda^{n-j}\,,\,k_j^{-2}\right\}\ .
 \end{equation}
 
 We are now ready to estimate the ratio of determinant Jacobians in \eqref{jacobratio}. Using \eqref{xyskew} and the chain rule, we see that 
 \[
  DT^n(\rho_0,\alpha_0)\;=\; \left[\begin{array}{cc}
                            \prod_{j=0}^{n-1} G'(\rho_j) & 0 \\
                            {}&{}\\
                            * & \prod_{j=0}^{n-1} T_{\rho_j}'(\alpha_j)
                           \end{array}\right]\ \ ,
 \]
and similarly for $DT^n(\rho_0^*,\alpha_0^*)$. Hence the ratio of determinant 
Jacobians at both points equals
\begin{equation}\label{twoprods}
 \frac{\det{DT^n(\rho_0,\alpha_0)}}{\det{DT^n(\rho_0^*, \alpha_0^*)}}\;=\; 
 \prod_{j=0}^{n-1} \frac{G'(\rho_j)}{G'(\rho_j^*)}\;
 \prod_{j=0}^{n-1} \frac{T_{\rho_j}'(\alpha_j)}{T_{\rho_j^*}'(\alpha_j^*)}
\end{equation}

We proceed to estimate both products in the right-hand side of \eqref{twoprods}. 

\begin{enumerate}
\item[(i)] Since $G'(\xi)=-1/\xi^2$ wherever $G$ is differentiable, each term in the first product is positive, equal to $(\rho_j^*/\rho_j)^2$, and thus we have
\[
 \left|\log{\prod_{j=0}^{n-1} \frac{G'(\rho_j)}{G'(\rho_j^*)}}\right|
 \;\leq\; 2\sum_{j=0}^{n-1} \left|\log{\rho_j}-\log{\rho_j^*}\right|
\]
The mean value inequality tells us that $\left|\log{\rho_j}-\log{\rho_j^*}\right|
\leq (k_j+1)|\rho_j-\rho_j^*|$, and therefore, by \eqref{xestimate}, we have
\begin{equation}\label{estG}
 \left|\log{\prod_{j=0}^{n-1} \frac{G'(\rho_j)}{G'(\rho_j^*)}}\right|
 \;\leq\; 4\sum_{j=0}^{n-1} k_j\min\left\{ C\lambda^{n-j}\,,\,k_j^{-2}\right\}\ .
\end{equation}

\item[(ii)] From the formulas defining the fiber maps $T_{\rho}$ (see Section \ref{SecFibermaps}), we deduce that there are only three possibilities:
\[
\frac{T_{\rho_j}'(\alpha_j)}{T_{\rho_j^*}'(\alpha_j^*)}\;=\; 
\left\{
\begin{array}{cl}
1\ , & \mbox{if $-1<\alpha_j,\alpha_j^*<0$} \\
{}&{}\\
\displaystyle{\frac{\rho_j^*\rho_{j+1}^*}{\rho_j \rho_{j+1}}}\ , & \mbox{if $0<\alpha_j<1-k_j\rho_j$ and $0<\alpha_j^*<1-k_j\rho_j^*$}\\
{}&{}\\
\displaystyle{\frac{\rho_j^*}{\rho_j}}\ , & \mbox{if $1-k_j\rho_j<\alpha_j<1$ and $1-k_j\rho_j^*<\alpha_j^*<1$}
\end{array}
\right.
\]

Whichever case occurs, we always have
\[
 \left| \log{\frac{T_{\rho_j}'(\alpha_j)}{T_{\rho_j^*}'(\alpha_j^*)}} \right| \leq 
 \left| \log{\rho_j} - \log{\rho_j^*}\right| +  
 \left| \log{\rho_{j+1}} - \log{\rho_{j+1}^*}\right| \ .
\]
This yields 
\[
 \left|\log{\prod_{j=0}^{n-1} \frac{T_{\rho_j}'(\alpha_j)}{T_{\rho_j^*}'(\alpha_j^*)} }\right|
 \leq 2\sum_{j=0}^{n} \left| \log{\rho_j} - \log{\rho_j^*}\right|\ ,
\]
Therefore, using the mean value inequality and \eqref{xestimate} just as in (i), we deduce that
\begin{equation}\label{estT}
 \left|\log{\prod_{j=0}^{n-1} \frac{T_{\rho_j}'(\alpha_j)}{T_{\rho_j^*}'(\alpha_j^*)} }\right|
 \;\leq\; 4\sum_{j=0}^{n} k_j\min\left\{ C\lambda^{n-j}\,,\,k_j^{-2}\right\}\ .
\end{equation}

\end{enumerate}
Combining the estimates \eqref{estG} and \eqref{estT}, we arrive at
\begin{equation}\label{estGT}
 \left|\log{\left(\prod_{j=0}^{n-1} \frac{G'(\rho_j)}{G'(\rho_j^*)}\;\prod_{j=0}^{n-1} \frac{T_{\rho_j}'(\alpha_j)}{T_{\rho_j^*}'(\alpha_j^*)}\right)}\right|
 \;\leq\; 8\sum_{j=0}^{n} k_j\min\left\{ C\lambda^{n-j}\,,\,k_j^{-2}\right\}\ .
\end{equation}
Applying Lemma \ref{simplelemma} with $b_j=C\lambda^j$, we deduce that the sum on the right-hand side of \eqref{estGT} is bounded by $B=\sqrt{C}/(1-\sqrt{\lambda})$. 
Thus, exponentiating both sides of this last inequality, one finally arrives 
at \eqref{jacobratio}, with $K=e^{8B}$. This completes the proof of Proposition \ref{boundjacobdist}.
\end{proof}

In what follows, we denote by $\mathrm{meas}(A)$ the Lebesgue measure of a measurable set $A\subseteq R$. 

\begin{lemma}\label{mass1} Let $A\subseteq R^{\pm}$ be a set with positive Lebesgue measure. Then there exists a constant $0<c_A<1$ such that, for every Markov $n$-tile $W$ with $T^n(W)=R^{\pm}$, we have
 \begin{equation}\label{massratio}
  \frac{\mathrm{meas}(W\cap T^{-n}(A))}{\mathrm{meas}(W)}\;\geq\; c_A\ .
 \end{equation}
\end{lemma}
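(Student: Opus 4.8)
The plan is to reduce the statement to the bounded distortion estimate of Proposition \ref{boundjacobdist} by a change of variables along the inverse branch of $T^n$ associated with the tile. Concretely, suppose $W = W_{m_1,\dots,m_n}$ is a Markov $n$-tile with $T^n(W)=R^{\pm}$. Then $T^n|_W\colon W\to R^{\pm}$ is a diffeomorphism whose inverse is precisely the composition of inverse branches $\psi_W = \tau_{m_1}\circ\cdots\circ\tau_{m_n}\colon R^{\pm}\to W$. Since $\psi_W(A) = W\cap T^{-n}(A)$ and $\psi_W(R^{\pm}) = W$, the change-of-variables formula gives
$$\mathrm{meas}\big(W\cap T^{-n}(A)\big) = \int_A \big|\det D\psi_W\big|\, d\mathrm{meas}, \qquad \mathrm{meas}(W) = \int_{R^{\pm}} \big|\det D\psi_W\big|\, d\mathrm{meas}.$$
So everything comes down to controlling how much $\big|\det D\psi_W\big|$ varies over $R^{\pm}$.

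Next I would observe that, by the chain rule, $\big|\det D\psi_W(\xi)\big| = \big|\det DT^n(\psi_W(\xi))\big|^{-1}$ for every $\xi\in R^{\pm}$, and that for any two points $\xi,\xi^*\in R^{\pm}$ their $\psi_W$-images both lie in the same Markov $n$-tile $W$. Hence Proposition \ref{boundjacobdist} yields, with its universal constant $K>1$,
$$\frac{1}{K}\;\leq\;\frac{\big|\det D\psi_W(\xi)\big|}{\big|\det D\psi_W(\xi^*)\big|}\;\leq\; K \qquad\text{for all }\xi,\xi^*\in R^{\pm}.$$
Writing $j_W = \inf_{R^{\pm}}\big|\det D\psi_W\big|$, this gives $j_W \le \big|\det D\psi_W\big| \le K j_W$ throughout $R^{\pm}$, so from the two integral identities above (and $\mathrm{meas}(R^{\pm})=1$) we obtain $\mathrm{meas}\big(W\cap T^{-n}(A)\big)\geq j_W\,\mathrm{meas}(A)$ and $\mathrm{meas}(W)\leq K j_W$. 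Dividing, we conclude
$$\frac{\mathrm{meas}\big(W\cap T^{-n}(A)\big)}{\mathrm{meas}(W)}\;\geq\;\frac{\mathrm{meas}(A)}{K}\;=:\;c_A,$$
which works for every $n$ and every $n$-tile mapping onto the given $R^{\pm}$; one has $0<c_A$ since $\mathrm{meas}(A)>0$, and $c_A<1$ since $\mathrm{meas}(A)\le\mathrm{meas}(R^{\pm})=1<K$.

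I do not expect any serious obstacle: this is the standard ``bounded distortion forces comparability of conditional densities'' mechanism. The only point deserving attention is that the distortion constant $K$ of Proposition \ref{boundjacobdist} is uniform in $n$ and over all Markov tiles — which is exactly how that proposition is stated — so that the resulting $c_A$ depends only on $A$, through $\mathrm{meas}(A)$, and not on $W$ or $n$. The fact that $T^n|_W$ is a diffeomorphism only on the open tile is harmless, since the union of tile boundaries is a Lebesgue-null set by (MT3) and all quantities involved are measures of Borel sets.
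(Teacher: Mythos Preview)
Your proof is correct and follows essentially the same approach as the paper's: both use the change-of-variables formula together with the uniform Jacobian distortion bound of Proposition \ref{boundjacobdist} to compare $\mathrm{meas}(W\cap T^{-n}(A))$ with $\mathrm{meas}(W)$. The only cosmetic difference is that the paper integrates $|\det DT^n|$ over $W$ and $W\cap T^{-n}(A)$ and invokes the mean-value theorem for double integrals, whereas you pull back via the inverse branch $\psi_W$ and integrate $|\det D\psi_W|$ over $R^{\pm}$ and $A$; these are equivalent formulations, and your explicit constant $c_A=\mathrm{meas}(A)/K$ matches what the paper obtains.
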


\begin{proof}[Proof of Lemma \ref{mass1}] Since $T^n$ maps $W$ diffeomorphically onto $R^{\pm}$, the change-of-variables formula tells us that 
 \[
  \mathrm{meas}(A)\;=\; \iint_{W\cap T^{-n}(A)} \left|\det{DT^n}(\rho,\alpha)\right|\,d\rho d\alpha \ ,
 \]
 as well as 
 \[
 1\;=\; \mathrm{meas} (R^{\pm})\;=\; 
 \iint_{W} \left|\det{DT^n}(\rho,\alpha)\right|\,d\rho d\alpha\ .
 \]
 Applying the mean-value theorem for double integrals to both integrals above and using 
 Proposition \ref{boundjacobdist}, we deduce \eqref{massratio}, with a constant 
 $c_A$ that depends only on $\mathrm{meas}(A)$ (and the constant $K$ in \eqref{jacobratio}). 
\end{proof}

\begin{lemma}\label{mass2}
 If $B\subseteq R$ is a set with positive Lebesgue measure, then 
 \[ 
  \mathrm{meas}\left(R\setminus \bigcup_{n\geq 0} T^{-n}(B)\right)\;=\;0\ .
 \]
\end{lemma}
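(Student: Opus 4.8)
The plan is to run a standard bounded-distortion ergodicity argument built on the Markov structure of $T$: Lemma \ref{mass1} is the engine, and the covering property (MT4) converts a statement about Markov tiles into one about arbitrary open subsets of $R$. Write $U=\bigcup_{n\ge 0}T^{-n}(B)$ and $E=R\setminus U$; the goal is to show $\mathrm{meas}(E)=0$.

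\emph{A normalization.} Since $R^+\cup R^-$ is co-null in $R$, at least one of $B\cap R^+$ and $B\cap R^-$ has positive measure. If only $B\cap R^-$ does, I replace $B$ by $T^{-1}(B)$: each restriction $T|_{U_k}\colon U_k\to R^-$ is a diffeomorphism with $U_k\subset R^+$, so $T^{-1}(B)$ meets $R^+$ in positive measure, whereas $\bigcup_{n\ge 0}T^{-n}(T^{-1}(B))\subset U$, so proving the lemma for $T^{-1}(B)$ yields it for $B$. Hence I may assume $\mathrm{meas}(A)>0$, where $A:=B\cap R^+\subset R^+$; let $c_A\in(0,1)$ be the constant furnished by Lemma \ref{mass1} for this set.

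\emph{Density of $U$ in every Markov tile.} The claim is that $\mathrm{meas}(W\cap U)\ge c_A\,\mathrm{meas}(W)$ for every $W\in\mathcal{W}$. Let $W$ be a Markov $n$-tile. If $T^n(W)=R^+$, this is immediate from Lemma \ref{mass1}, since $T^{-n}(A)\subset T^{-n}(B)\subset U$. If $T^n(W)=R^-$, I refine one level further: an $(n+1)$-subtile of $W$ has the form $W_{m_1,\dots,m_n,m}$ with $W_m\subset T^n(W)=R^-$, and the only atoms contained in $R^-$ are the rectangles $R_k$; since $T(R_k)=R^+$, each such subtile $W_j$ satisfies $T^{n+1}(W_j)=R^+$. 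These $W_j$ are pairwise disjoint and, because $\bigcup_k R_k=R^-$ modulo a null set and the inverse branch defining $W$ is a diffeomorphism, they cover $W$ modulo a null set; applying the previous case to each $W_j$ and summing gives $\mathrm{meas}(W\cap U)\ge\sum_j\mathrm{meas}(W_j\cap U)\ge c_A\sum_j\mathrm{meas}(W_j)=c_A\,\mathrm{meas}(W)$. In particular $\mathrm{meas}(W\cap E)\le(1-c_A)\,\mathrm{meas}(W)$ for every tile $W$.

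\emph{Conclusion.} Suppose $\mathrm{meas}(E)>0$. Given $\varepsilon>0$, outer regularity of Lebesgue measure provides an open set $O\supset E$ with $\mathrm{meas}(O)\le\mathrm{meas}(E)+\varepsilon$; by (MT4) I may write $O$, modulo a null set, as a countable disjoint union of Markov tiles $W_i\subset O$. Then
\[
\mathrm{meas}(E)\;=\;\sum_i\mathrm{meas}(E\cap W_i)\;\le\;(1-c_A)\sum_i\mathrm{meas}(W_i)\;=\;(1-c_A)\,\mathrm{meas}(O)\;\le\;(1-c_A)\bigl(\mathrm{meas}(E)+\varepsilon\bigr)\ .
\]
Letting $\varepsilon\to 0$ gives $\mathrm{meas}(E)\le(1-c_A)\,\mathrm{meas}(E)$, hence $\mathrm{meas}(E)=0$ since $c_A>0$. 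The one delicate point is the case $T^n(W)=R^-$ above, where the explicit combinatorics of the partition enter (which atoms lie in $R^-$, together with $T(U_k)=R^-$ and $T(R_k)=T(V_{k,\ell})=R^+$); the normalization step is precisely what makes a single refinement suffice there, rather than an infinite cascade through the trapezoidal atoms $V_{k,\ell}$.
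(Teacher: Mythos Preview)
Your argument is correct and follows essentially the same bounded-distortion scheme as the paper: both proofs use Lemma \ref{mass1} on every Markov tile and then the covering property (MT4) to globalize. The differences are cosmetic. First, the paper normalizes by replacing $B$ with $T^{-1}(B)$ so that \emph{both} $B\cap R^+$ and $B\cap R^-$ have positive measure, and then applies Lemma \ref{mass1} directly with $c_{B^\pm}$ according to whether $T^n(W)=R^+$ or $R^-$; you instead normalize so that only $B\cap R^+$ has positive measure and handle the $R^-$ case by refining one level (using that the only atoms in $R^-$ are the $R_k$, which all map to $R^+$). Second, for the conclusion the paper uses a Lebesgue density point of $E$ to produce a disk in which $E$ has density close to $1$, while you use outer regularity to approximate $E$ by an open superset and sum the tile estimates; these are interchangeable ways of extracting a contradiction from the uniform lower bound $\mathrm{meas}(W\cap U)\ge c\,\mathrm{meas}(W)$.
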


\begin{proof}[Proof of Lemma \ref{mass2}] Replacing $B$ by $T^{-1}(B)$ if necessary, we may assume that $B^+=B\cap R^+$ and $B^-=B\cap R^-$ both have positive measure. Let $\epsilon=\frac{1}{2}\min\{c_{B^+}\,,\,c_{B^-}\}$, where $c_{B^{\pm}}$ are the constants obtained applying Lemma \ref{mass1} to $A=B^{\pm}$. 
 
We argue by contradiction. Suppose $E=R\setminus \bigcup_{n\geq 0} T^{-n}(B)$ 
 is such that $\mathrm{meas}(E)>0$. Let $z\in E$ be a Lebesgue density point of $E$, and choose $\delta>0$ so small that the disk $D=D(z,\delta)\subset R$ satisfies 
 \begin{equation}\label{density1}
  \frac{\mathrm{meas}(D\cap E)}{\mathrm{meas}(D)}\;\geq\; 1-\epsilon\ .
 \end{equation}
 By fact (MT4) stated right after Lemma \ref{markovtiles1}, there exists a collection $\mathcal{C}$ of pairwise disjoint Markov tiles such that 
 $D=D^*\cup \bigcup_{W\in \mathcal{C}} W$, where $D^*$ has zero Lebesgue measure. For each $W\in \mathcal{C}$, there exists a positive integer $m_K$ such that $T^{m_K}(W)=R^{\pm}\supseteq B^{\pm}$. Thus, by Lemma \ref{mass1}, we have
 \[
  \mathrm{meas}\left(W\cap \bigcup_{n\geq 0} T^{-n}(B)\right) \geq 
  \mathrm{meas}\left(W\cap T^{-m_K}(B^{\pm})\right) \geq 
  c_{B^{\pm}} \mathrm{meas}(W) \geq 2\epsilon\, \mathrm{meas}(W)\ .
 \]
 Since this is true for every Markov tile in $\mathcal{C}$, we deduce that 
 $\mathrm{meas}(D\cap \bigcup_{n\geq 0} T^{-n}(B)) \geq 2\epsilon  \mathrm{meas}(D)$, that is to say,
 \begin{equation}\label{density2}
  \frac{\mathrm{meas}(D\cap (R\setminus E))}{\mathrm{meas}(D)} 
  \geq 2\epsilon\ .
 \end{equation}

But \eqref{density1} and \eqref{density2} are clearly incompatible. This contradiction shows that $\mathrm{meas}(E)=0$, and the lemma is proved.
\end{proof}

\begin{coro}\label{coroergodicity} Let $A\subset R$ be a Borel set which is strongly invariant under $T$, {\it i.e.,\/} $T^{-1}(A)=A$. If $A$ has positive Lebesgue measure, then it has full Lebesgue measure in the whole rectangle $R$.
\end{coro}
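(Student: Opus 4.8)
The statement follows almost immediately from Lemma \ref{mass2}, which is the substantive ingredient. The only thing to observe is that strong invariance upgrades the ``union of preimages'' in Lemma \ref{mass2} into the set $A$ itself.

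First I would record that if $T^{-1}(A)=A$, then $T^{-n}(A)=A$ for every $n\geq 0$. This is a trivial induction: the base case $n=0$ is immediate, and if $T^{-n}(A)=A$ then $T^{-(n+1)}(A)=T^{-1}\big(T^{-n}(A)\big)=T^{-1}(A)=A$. Consequently
\[
\bigcup_{n\geq 0} T^{-n}(A)\;=\;A\ .
\]

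Next I would apply Lemma \ref{mass2} with $B=A$. Since $A$ is a Borel set of positive Lebesgue measure, that lemma gives
\[
\mathrm{meas}\!\left(R\setminus \bigcup_{n\geq 0} T^{-n}(A)\right)\;=\;0\ .
\]
Combining this with the identity displayed above yields $\mathrm{meas}(R\setminus A)=0$, i.e.\ $A$ has full Lebesgue measure in $R$, which is exactly the assertion of the corollary.

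\textbf{Main obstacle.} There is essentially no obstacle in the corollary itself; all the real work has already been done in Lemma \ref{mass2} (and, behind it, in the bounded-distortion estimate of Proposition \ref{boundjacobdist} and the covering statement of Lemma \ref{mass1}). The only point requiring a moment's care is the harmless reduction that turns $T^{-1}(A)=A$ into $T^{-n}(A)=A$ for all $n$; everything else is a direct citation. One could equally phrase the argument contrapositively: if neither $A$ nor its complement were null, applying Lemma \ref{mass2} to $B=R\setminus A$ would force $\mathrm{meas}(A)=\mathrm{meas}\big(\bigcup_n T^{-n}(A)\big)=0$, a contradiction; but the direct form above is cleaner. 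This corollary, together with Lemma \ref{lemaacip} and the fact (from (MT4) and Lemma \ref{mass1}) that $\mathrm{meas}$ and $\mu_T$ share the same null sets, then immediately gives the ergodicity and uniqueness claimed in Theorem \ref{skewprodthm}, and hence (restricting to $M$) Proposition \ref{genorbdensas}.
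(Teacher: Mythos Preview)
Your proof is correct and follows exactly the same route as the paper: from $T^{-1}(A)=A$ deduce $T^{-n}(A)=A$ for all $n$, then apply Lemma \ref{mass2} with $B=A$ to conclude $\mathrm{meas}(R\setminus A)=0$. The additional commentary in your final paragraph goes beyond what the corollary requires, but the proof itself matches the paper's argument essentially line for line.
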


\begin{proof}[Proof of Corollary \ref{coroergodicity}] The invariance $T^{-1}(A)=A$ implies $T^{-n}(A)=A$ for all $n\geq 0$. Since $\mathrm{meas}(A)>0$, we obtain from Lemma \ref{mass2} that $\mathrm{meas}(A)=\mathrm{meas}(\bigcup_{n\geq 0}T^{-n}(A))=\mathrm{meas}(R)$.
\end{proof}

With this at hand we can finish the proof of Theorem \ref{skewprodthm}: Corollary \ref{coroergodicity} implies at once that any absolutely continuous probability measure which is invariant under $T$, is also ergodic under $T$. Therefore, the measure $\mu_T$ given by Lemma \ref{lemaacip} is ergodic. Moreover, since the support of $\mu_T$ is itself a strongly invariant subset of $R$ with positive Lebesgue measure (because it has full $\mu_T$-measure), Corollary \ref{coroergodicity} implies that it must coincide with the whole rectangle $R$ (since it is compact and it has full measure). In particular, $\mu_T$ is the unique absolutely continuous probability measure invariant under $T$, and this concludes the proof of Theorem \ref{skewprodthm}. We finish this appendix by proving Proposition \ref{genorbdensas}.

\begin{proof}[Proof of Proposition \ref{genorbdensas}] Let $B_1,B_2, \ldots, B_j,\ldots$ be a basis for the topology of $R^+\cup R^-$. For each $j\geq 1$, let $B_j^\infty = \bigcup_{n\geq 0} T^{-n}(B_j)$. Note that each $B_j^\infty\subset R^+\cup R^-$ is open, and by Lemma \ref{mass2} it has full Lebesgue measure in $R$ (in particular, it is also dense in $R$). Therefore $\mathcal{G}_0=\bigcap_{j\geq 1} B_j^\infty$ also has full Lebesgue measure in $R$. Moreover, $\mathcal{G}_0$ is a dense $G_\delta$, hence residual, subset of $R^+\cup R^-$. Finally, if $z$ is any point in $\mathcal{G}_0$, then its positive orbit $\{T^n(z):\;n\geq 0\}$ visits every basic set $B_j$, and therefore is dense in $R$. 
\end{proof}

\section{Some connections with Renormalization Theory}\label{appren}

As mentioned in the introduction, rigidity results in one-dimensional dynamics are usually related to the behaviour of some \emph{renormalization operator\/}. For circle homeomorphisms with irrational rotation number, the standard procedure is to define a renormalization operator acting on the space of \emph{commuting pairs} (see for instance \cite[Section 2]{edsonwelington1} and references therein). A fundamental principle in Renormalization Theory states that exponential convergence of renormalization orbits implies rigidity: topological conjugacies are actually smooth. For critical circle maps with a single critical point, this principle has been established by the first author and de Melo for Lebesgue almost every irrational rotation number \cite[First Main Theorem]{edsonwelington1}, and extended later by Khanin and Teplinsky to cover all irrational rotation numbers \cite[Theorem 2]{khaninteplinsky}. Adapting these previous approaches, this fundamental principle has been recently established for multicritical circle maps in \cite[Theorem A]{EG}.

Given a bi-critical circle map $f$ with irrational rotation number $\rho$, unique invariant Borel probability measure $\mu$ and critical points $c_1$ and $c_2$, let $\alpha\in (0,1)$ be such that the two connected components of $S^1\setminus\{c_1,c_2\}$ have $\mu$-measures equal to $\alpha$ and $1-\alpha$ respectively. We say that the pair $(\rho,\alpha)$ is the \emph{signature} of $f$. It is not difficult to see that the skew product $T$, constructed in \S \ref{Secskew} of the present paper, coincides with the action of the renormalization operator on the signature $(\rho,\alpha)$. The expanding behaviour of the fiber maps $T_{\rho}$ from \S \ref{SecFibermaps} suggests both coexistence of periodic orbits and chaotic behaviour inside topological classes of bi-critical commuting pairs (since, by Yoccoz's result \cite{yoccoz}, the topological classes are obtained just by fixing the rotation number $\rho$). In the recent preprint \cite{yampolsky5}, Yampolsky was able to prove that if $(\rho,\alpha)$ is any given periodic orbit under $T$, say of period $p\in\nt$, then there exists a real-analytic bi-critical circle map, whose signature equals $(\rho,\alpha)$, which is a periodic orbit (of the same period $p$) for the renormalization operator \cite[Theorem 2.8]{yampolsky5}. These periodic orbits are \emph{hyperbolic}, with local stable manifolds of codimension $2$\, \cite[Theorem 8.3]{yampolsky5}. Moreover, each local stable manifold is obtained precisely by fixing the signature $(\rho,\alpha)$ (again, see \cite[Theorem 8.3]{yampolsky5}), which is compatible with the expanding behaviour of the skew product $T$, as discussed in Appendix \ref{apperg}.

\medskip

\section*{Acknowledgements}

This work had its seeds planted in the summer of 2016, when both authors visited Imperial College London. We wish to express our thanks to that institution for its hospitality, and especially to Sebastian van Strien for his kind invitation and support.

\newpage


\begin{thebibliography}{[ABD]}
\bibitem{astala2009} K.~Astala, T.~Iwaniec and G.~Martin, {\em Elliptic Partial Differential Equations and Quasiconformal Mappings in the Plane}, Princeton University Press, 2009.
\bibitem{avila} A.~Avila, On rigidity of critical circle maps, {\em Bull. Braz. Math. Soc.} {\bf 44} (2013), 611--619.
\bibitem{AChE20} A.~Avila, D.~Cheraghi and A.~Eliad, Analytic maps of parabolic and elliptic type with trivial centralisers. Available at {\tt{arXiv:2003.13336\/}}.
\bibitem{Buzzi} J.~Buzzi, Absolutely continuous invariant probability measures for arbitrary expanding piecewise $\R$-analytic mappings of the plane, {\em Ergod. Th. \& Dynam. Sys.} {\bf 20} (2000), 697--708.
\bibitem{CvS} T.~Clark and S.~van Strien, Quasisymmetric rigidity in one-dimensional dynamics. Available at {\tt{arXiv:1805.092843\/}}.
\bibitem{EdF} G.~Estevez and E.~de Faria, Real bounds and quasisymmetric rigidity of multicritical circle maps,
{\em Trans. Amer. Math. Soc.} {\bf{370}} (2018), 5583--5616.
\bibitem{EdFG} G.~Estevez, E.~de Faria and P.~Guarino, Beau bounds for multicritical circle maps, {\em Indagationes Mathematic\ae\/} {\bf{29}} (2018), 842--859.
\bibitem{EG} G.~Estevez and P.~Guarino, Renormalization of multicritical circle maps, preprint.
\bibitem{EdsonThesis} E.~de Faria, Proof of universality for critical circle mappings, {\em Ph.D. Thesis}, CUNY, 1992.
\bibitem{edsonETDS} E.~de Faria, Asymptotic rigidity of scaling ratios for critical circle mappings, {\em Ergod. Th. \& Dynam. Sys.} {\bf 19} (1999), 995-1035.
\bibitem{dFG2016} E.~de Faria and P.~Guarino, Real bounds and Lyapunov exponents, {\em Disc. and Cont. Dyn. Sys. A} {\bf 36} (2016), 1957--1982.
\bibitem{dFG20} E.~de Faria and P.~Guarino, There are no $\sigma$-finite absolutely continuous invariant measures for multicritical circle maps, to appear in {\em Nonlinearity}.
\bibitem{edsonwelington1} E.~de Faria and W.~de Melo, Rigidity of critical circle mappings I, {\em J. Eur. Math. Soc.} {\bf 1} (1999), 339--392.
\bibitem{edsonwelington2} E.~de Faria and W.~de Melo, Rigidity of critical circle mappings II, {\em J. Amer. Math. Soc.} {\bf 13} (2000), 343-370.
\bibitem{dFdMbook} E.~de Faria and W.~de Melo, {\em Mathematical tools for one-dimensional dynamics}, Cambridge University Press, 2008.
\bibitem{PabloThesis} P.~Guarino, Rigidity conjecture for $C^3$ critical circle maps, {\em Ph.D. Thesis}, IMPA, 2012.
\bibitem{GdM2013} P.~Guarino and W.~de Melo, Rigidity of smooth critical circle maps, {\em J. Eur. Math. Soc.\/} {\bf{19}} (2017), 1729--1783.
\bibitem{GMdM2015} P.~Guarino, M.~Martens and W.~de Melo, Rigidity of critical circle maps, \emph{Duke Math. J.} {\bf{167}} (2018), 2125--2188.
\bibitem{hermanIHES} M.~Herman, Sur la conjugaison differentiable des diffeomorphisms du cercle a des rotations, {\em Publ. Math. IHES} {\bf 49} (1979), 5--234.
\bibitem{H} M.~Herman, Conjugaison quasi-sim\'etrique des hom\'eomorphismes du cercle \`a des rotations (manuscript), (1988). (see also the translation by A.~Ch\'eritat, {\it Quasisymmetric conjugacy of analytic circle homeomorphisms to rotations\/},
{\tt{www.math.univ-toulouse.fr/{$\sim$}cheritat/Herman/e\_\,herman.html}}).
\bibitem{KO} Y.~Katznelson and D.~Ornstein, The differentiability of the conjugation of certain diffeomorphisms of the circle, {\em Ergod. Th. \& Dynam. Sys.} {\bf{9}} (1989), 643--680.
\bibitem{khaninteplinsky} K.~Khanin and A.~Teplinsky, Robust rigidity for circle diffeomorphisms with singularities, {\em Invent. Math.} {\bf 169} (2007), 193--218.
\bibitem{khin} A. Ya.~Khinchin, {\em Continued fractions}, (reprint of the 1964 translation), Dover Publications, Inc. (1997).
\bibitem{khmelevyampolsky} D.~Khmelev and M.~Yampolsky, The rigidity problem for analytic critical circle maps, {\em Mosc. Math. J.} {\bf 6} (2006), 317--351.
\bibitem{lanford1} O. E.~Lanford, Renormalization group methods for critical circle mappings with general rotation number, {\em VIIIth Int. Congress on Mathematical Physics (Marseille, 1986)}, World Scientific, Singapore (1987), 532--536.
\bibitem{lanford2} O. E.~Lanford, Renormalization group methods for circle mappings, {\em Nonlinear Evolution and Chaotic Phenomena (NATO Adv. Sci. Inst. Ser. B: Phys., 176)}, Plenum, New York (1988), 25--36.
\bibitem{Lang} S.~Lang, {\em Introduction to Diophantine Approximations}, (new expanded edition), Springer-Verlag, (1995).
\bibitem{MPW} M.~Martens, L.~Palmisano and B.~Winckler, The Rigidity Conjecture, {\em Indagationes Mathematic\ae\/} {\bf{29}} (2018), 825--830.
\bibitem{swiatek} G.~\'Swi\c{a}tek, Rational rotation numbers for maps of the circle, {\em Commun. Math. Phys.} {\bf 119} (1988), 109--128.
\bibitem{tru} F.~Trujillo, Hausdorff dimension of invariant measures of multicritical circle maps, {\em Ann. Henri Poincar\'e} {\bf 21} (2020), 2861--2875.
\bibitem{Ts2} M.~Tsujii, Absolutely continuous invariant measures for expanding piecewise linear maps, {\em Invent. Math.} {\bf 143} (2001), 349--373.
\bibitem{yampolsky1} M.~Yampolsky, Complex bounds for renormalization of critical circle maps, {\em Ergod. Th. \& Dynam. Sys.} {\bf 19} (1999), 227--257.
\bibitem{yampolsky2} M.~Yampolsky, The attractor of renormalization and rigidity of towers of critical circle maps, {\em Commun. Math. Phys.} {\bf 218} (2001), 537--568.
\bibitem{yampolsky3} M.~Yampolsky, Hyperbolicity of renormalization of critical circle maps, {\em Publ. Math. IHES} {\bf 96} (2002), 1--41.
\bibitem{yampolsky4} M.~Yampolsky, Renormalization horseshoe for critical circle maps, {\em Commun. Math. Phys.} {\bf 240} (2003), 75--96.
\bibitem{yampolsky5} M.~Yampolsky, Renormalization of bi-cubic circle maps, {\em C. R. Math. Rep. Acad. Sci. Canada} {\bf 41} (2019), 57--83.
\bibitem{yoccoz} J.-C.~Yoccoz, Il n'y a pas de contre-exemple de Denjoy analytique, {\em C.R. Acad. Sc. Paris} {\bf 298} (1984), 141--144.
\bibitem{yoccoz2} J.-C.~Yoccoz, Centralisateurs et conjugaison differentiable des diff\'{e}omorphismes du cercle, {\em These d'Etat, Universit\'{e} Paris Sud\/}, 1985.
\bibitem{yoccoz22} J.-C.~Yoccoz, Centralisateurs et conjugaison diff{\'e}rentiable des diff{\'e}omorphismes du cercle, {\em Ast\'erisque} {\bf 231} (1995), 89--242.
\bibitem{yoccoz3} J.-C.~Yoccoz, Conjugaison diff\'erentiable des diff\'eomorphismes du cercle dont le nombre de rotation v\'erifie une condition diophantienne, {\em Ann. Scient. \'Ec. Norm. Sup.} {\bf 17} (1984), 333--359.
\bibitem{zak} S.~Zakeri, Dynamics of cubic Siegel polynomials, \emph{Communications in Mathematical Physics} {\bf{206}} (1999), 185--233.
\end{thebibliography}
\end{document}